\documentclass{amsart}
\usepackage{geometry}                
\geometry{letterpaper}                   
\usepackage{graphicx}
\usepackage{amssymb}
\usepackage{epstopdf,amsmath}
\newtheorem{prop}{Proposition}[section]
\newtheorem{lemma}[prop]{Lemma}
\newtheorem{cor}[prop]{Corollary}

\newtheorem{theorem}[prop]{Theorem}
\newtheorem{hypoth}[prop]{Hypotheses}

\DeclareMathOperator{\Vol}{Vol}
\DeclareMathOperator{\Dim}{Dim}
\DeclareMathOperator{\Poly}{Poly}
\DeclareMathOperator{\Deg}{Deg}
\DeclareMathOperator{\Avg}{Avg}
\DeclareMathOperator{\Angle}{Angle}
\DeclareMathOperator{\Length}{Length}
\DeclareMathOperator{\Area}{Area}
\DeclareMathOperator{\Dist}{Dist}
\DeclareMathOperator{\Prob}{Prob}

\newtheorem{definition}[prop]{Definition}
\newtheorem{reascube}{Reasonable Cube Condition}
\newtheorem{reasseg}{Reasonable Tube Segment Condition}
\newtheorem{reasslice}{Reasonable Slice Condition}

\newcommand{\FF}{\mathbb{F}}

\newcommand{\RR}{\mathbb{R}}

\title{Degree reduction and graininess for Kakeya-type sets in $\RR^3$}

\author{Larry Guth}

\begin{document}

\maketitle

\begin{abstract}  Let $\frak T$ be a set of cylindrical tubes in $\RR^3$ of length $N$ and radius 1.  If the union of the tubes has volume $N^{3 - \sigma}$, and each point in the union lies in tubes pointing in three quantitatively different directions, and if a technical assumption holds, then at scale $N^\sigma$, the tubes are clustered into rectangular slabs of dimension $1 \times N^\sigma \times N^\sigma$.  This estimate generalizes the graininess estimate in \cite{KLT}.  The proof is based on modeling the union of tubes with a high-degree polynomial.

\end{abstract}

In \cite{D}, Dvir proved the finite field Kakeya conjecture using the polynomial method.  It is an interesting open problem how much this approach can tell us about the Kakeya problem in $\RR^n$.  
The paper \cite{GK} uses the polynomial method to prove results about the combinatorics of finite sets of lines in $\RR^3$.  The Kakeya problem involves thin tubes instead of lines, and it seems to be quite difficult to adapt the polynomial method from lines to tubes.  In this paper, we adapt some of the ideas from \cite{GK} to prove results about tubes in $\RR^3$.  Our results describe some structural features of a (hypothetical) Kakeya set.

The paper \cite{GK} proves that a set of lines with too many high-multiplicity intersections must cluster into planes.  Here is a precise statement (this is Theorem 1.2 in \cite{GK}.)

\begin{theorem} \label{gk} There exists a constant $c > 0$ so that the following holds.  Suppose that $\frak
 L$ is a set of $N^2$ lines in $\RR^3$.  Suppose that $X$ is a set of points in $\RR^3$, and each line of $\frak L$ contains at least $N$ points of $X$.  If $|X| < c N^3$, then there is a plane that contains at least $N+1$ lines of $\frak L$.
\end{theorem}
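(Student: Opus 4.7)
The plan is to apply the polynomial method. I would first construct a polynomial $P$ of degree less than $N$ vanishing on $X$, then use a Bezout-type argument to force $P$ to vanish on every line of $\frak L$, and finally analyze the zero set of $P$ using classical results on lines contained in algebraic surfaces.

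For the first step, the space of polynomials on $\RR^3$ of degree at most $D$ has dimension $\binom{D+3}{3} > D^3/6$, so choosing $D$ to be the least integer with $D^3/6 > |X|$ produces a nonzero polynomial $P$ of degree $\leq D$ vanishing on $X$ by the standard dimension-counting argument. Since $|X| < cN^3$, picking $c$ sufficiently small ensures $D < N$. Because each line $\ell \in \frak L$ carries at least $N > D$ points of $X$, the univariate restriction $P|_\ell$ has more zeros than its degree and must vanish identically; thus $P$ vanishes on every line of $\frak L$.

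Next I would factor $P = Q_1 \cdots Q_m$ into irreducibles with $\sum \deg Q_j \leq D < N$. Each line of $\frak L$, being itself irreducible, must lie in some $Z(Q_j)$. If some plane factor $Z(Q_j)$ (that is, with $\deg Q_j = 1$) contains at least $N+1$ lines of $\frak L$, we are done, so I would assume for contradiction that every plane contains at most $N$ lines of $\frak L$. Then the planar factors jointly absorb at most $N \cdot (\text{number of plane factors}) \leq ND < N^2$ lines, forcing a positive fraction of the $N^2$ lines of $\frak L$ onto non-plane components $Z(Q_j)$ of degree $\geq 2$.

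The hard part will be ruling this out. The key algebraic-geometric input, traceable to Cayley and Salmon and effected through the flecnode polynomial (whose degree is at most $11d - 24$ on a degree-$d$ factor), is that an irreducible non-plane surface containing infinitely many lines must be ruled in a very restrictive way: the lines organize into one or two rulings, each parameterized by a one-dimensional family, with lines in a single ruling pairwise disjoint off a proper subvariety. Consequently, the lines of $\frak L$ lying on non-plane components contribute essentially disjoint sets of at least $N$ points to $X$, yielding $|X| \gtrsim L_{\text{non-plane}} \cdot N$. Combined with $|X| < cN^3$, this forces $L_{\text{non-plane}} \lesssim cN^2$, contradicting the lower bound $L_{\text{non-plane}} \geq N^2 - ND$ once $c$ is chosen small enough, and completing the proof.
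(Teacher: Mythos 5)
Your opening steps are fine and agree with the standard opening (and with the outline this paper gives for Theorem \ref{gk}, whose full proof is in \cite{GK}): parameter counting gives a nonzero $P$ with $\Deg P \le D \lesssim c^{1/3}N$ vanishing on $X$, the vanishing lemma forces $P$ to vanish on every line of $\frak L$, and after factoring, the plane components absorb at most $ND \le c^{1/3}N^2$ lines under your contradiction hypothesis. The gap is in the final counting. From ``lines in a ruling are pairwise disjoint off a proper subvariety'' you conclude that the lines on non-planar components contribute ``essentially disjoint'' point sets, hence $|X| \gtrsim L_{np} N \gtrsim N^3$. That inference is invalid: smallness of pairwise overlaps does not bound the number of lines through a single point, and the double count $\sum_{\ell} |X \cap \ell| = \sum_{x \in X} m(x)$ only yields $|X| \gtrsim L_{np} N / m_{\max}$, where $m(x)$ is the number of relevant lines through $x$. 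The hypotheses themselves force high multiplicity: $\sum_x m(x) \ge N^3$ while $|X| < cN^3$, so points of $X$ lie on $\gtrsim 1/c$ lines of $\frak L$ on average, and nothing you have said prevents this from happening on the non-planar components. Through a point of a degree-$d_j$ non-planar component there can be up to $\sim d_j \approx c^{1/3}N$ lines of the surface (all in the tangent plane at a smooth point, or more at singular points and cone vertices, where the ruling lines are in fact concurrent rather than disjoint), and the other lines of $\frak L$ through a point of $X$ on your line $\ell$ may lie on different components, including plane components that legitimately carry up to $N$ lines each. Even the trivially true fact that two distinct lines share at most one point gives, by inclusion--exclusion, only $|X| \ge L_{np}N - L_{np}^2/2$, which is vacuous since $L_{np} \sim N^2 \gg N$.

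So the crux --- showing that the high-multiplicity incidences forced by $|X| < cN^3$ cannot be supported on non-planar components --- is exactly what you have assumed rather than proved. This is where the actual argument (sketched in this paper's introduction, following \cite{GK} and \cite{EKS}) does its work: after reducing to points lying on at least three lines of $\frak L$, each such point is either a singular point of $Z(P)$ or a flat point (vanishing second fundamental form); singularity and flatness are contagious along lines, a flat line has constant tangent plane $\pi(l)$, and every line of $\frak L$ meeting $l$ at a flat point lies in $\pi(l)$, which produces the $N+1$ coplanar lines. If you wish to keep the ruled-surface route, you would at minimum need a quantitative substitute, e.g.\ that on an irreducible non-planar surface of degree $d$ the points lying on three or more lines of the surface are confined to a curve of degree $O(d^2)$, together with bookkeeping for lines of $\frak L$ through points shared with other components --- at which point you have essentially reconstructed the singular/flat-point analysis rather than bypassed it.
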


In this paper, we will prove a theorem about tubes in the spirit of Theorem \ref{gk}.  However, our theorem about tubes is weaker in an important sense.  We will prove that a set of tubes with too many high-multiplicity intersections must cluster into planes {\it when restricted to balls of an appropriate radius}.

Here is a rough statement of our main theorem.  Suppose that $\frak T$ is a set of cylinders in $\RR^3$ with length $N$ and radius 1.  Suppose that the union of the cylinders in $\frak T$ has volume $N^{3 - \sigma}$, and suppose that most points in the union are contained in three tubes of $\frak T$ pointing in quantitatively different directions.  Then in a typical ball of radius $N^\sigma$, the union of the tubes resembles a collection of rectangular slabs of dimensions $1 \times N^\sigma \times N^\sigma$.  

This type of estimate is called a graininess estimate.  The first graininess estimate was proven by Katz, Laba, and Tao in \cite{KLT}.   We will recall some of their work in the next subsection.

Our proof is based on finding a polynomial surface of controlled degree that models the union of the tubes of $\frak T$.  We will find such a polynomial surface with degree $\lesssim N^{1- \sigma}$, and this degree estimate is optimal. 

\subsection{Planiness and graininess}

The paper \cite{KLT} proves that, for small $\epsilon$, a Kakeya set of Minkowski dimension $(5/2) + \epsilon$ in $\RR^3$ must have three remarkable structural properties: stickiness, planiness, and graininess.  Combining these properties with number theoretic arguments from \cite{B}, \cite{KLT} derives a contradiction for sufficiently small $\epsilon$.  In this way, they prove that a Kakeya set in $\RR^3$ must have upper Minkowski dimension at least $(5/2) + \epsilon$ for a small positive $\epsilon$.   For context, we recall the rough statements of their results on planiness and graininess.

Suppose that $\frak T$ is a set of cylindrical tubes in $\RR^3$ of length $N$ and radius 1.  The direction of a tube $T$ is the unit vector parallel to the central line of $T$.  We write $v(T)$ for the direction of $T$.  We say that $\frak T$ is a Kakeya set of tubes if it obeys the following hypotheses.

\begin{itemize}

\item There are $N^2$ tubes in $\frak T$.

\item For any two different tubes $T_i, T_j \in \frak T$, the angle between $v(T_i)$ and $v(T_j)$ is $\gtrsim 1/N$.

\end{itemize}

\cite{KLT} studies a Kakeya set of tubes where the volume of the union of the tubes is $\lesssim N^{(5/2) + \epsilon}$ for a small $\epsilon > 0$.  Their results also require assumptions at other scales: they also assume a volume bound on the union of the concentric tubes of radius $N^{1/2}$.  Since our paper doesn't involve any multi-scale considerations, we omit the details.  Under these assumptions, the authors prove that the set of tubes must be plany and grainy.  

Planiness roughly means that all the tubes of $\frak T$ that intersect a typical unit cube $Q$ lie close to a plane.  For each unit cube $Q$ that intersects the union of the tubes, they can assign a plane $\pi(Q)$, and for almost all $Q$, for almost all the tubes $T \in \frak T$ that intersect $Q$, the angle between $v(T)$ and $\pi(Q)$ is at most (roughly) $N^{-1/2}$. 

Graininess roughly means that the restriction of $\frak T$ to a typical ball of radius $N^{1/2}$ consists of parallel rectangular slabs of dimension $1 \times N^{1/2} \times N^{1/2}$.  Within this typical ball, the planes $\pi(Q)$ are all parallel to these slabs, and so they all agree up to an angle $\sim N^{-1/2}$.

In particular, if $Q, Q'$ lie in the same tube $T \in \frak T$, and the distance from $Q$ to $Q'$ is less than $N^{1/2}$, then the angle between $\pi(Q)$ and $\pi(Q')$ is (almost always) $\lesssim N^{-1/2}$.  
This estimate about how $\pi(Q)$ rotates as we slide $Q$ along a tube $T$ is the estimate that we will generalize.  This bound is only part of the graininess estimate in \cite{KLT}.  It forces the tubes in a typical ball of radius $N^{1/2}$ to organize into (disjoint) $1 \times N^{1/2} \times N^{1/2}$ slabs, but it doesn't force the slabs to be parallel.

One limitation of the proof in \cite{KLT} is that it only works for Kakeya sets of dimension close to $5/2$.  The dependence on $\epsilon$ goes as follows (see Proposition 8.1 in \cite{KLT}) :  If the dimension of the Kakeya set is $(5/2) + \epsilon$, then the angle in the planiness estimate is bounded by $N^{-1/2} N^{C \sqrt{\epsilon}}$ for a (fairly large) constant $C$.  When $C \sqrt{\epsilon} > 1/2$, then the planiness estimate becomes vacuous.  The story for graininess is similar.

The planiness estimate was proven in a different way in \cite{BCT}.  The multilinear Kakeya inequality in that paper is a very useful generalization of planiness.  For example, it shows that for any $\sigma > 0$, for a Kakeya set of tubes in $\RR^3$ with volume $N^{3 - \sigma}$, most tubes through a typical unit cube $Q$ lie within an angle $N^{-\sigma}$ of a plane $\pi(Q)$.  The multilinear Kakeya inequality was reproven (and slightly strengthened) in \cite{Gu1}, using the polynomial method.

In this paper we give a different approach to graininess using the polynomial method.  In some ways, our graininess result is more general than the one in \cite{KLT}, but it is also weaker in some other ways.   We state our main theorem precisely in the next subsection.

\subsection{Statement of results}

We will work with sets of tubes obeying the following hypotheses:

\begin{hypoth}  \label{uniform3transintro} Let $E > 1$.  
Suppose that $\frak T$ is a set of tubes in $\RR^3$ with radius 1 and length $EN$, contained in a ball of radius $EN$.  Suppose that $X$ is a set of $N^{3 - \sigma}$ disjoint unit cubes in this ball.  Suppose that $X$ and $\frak T$ obey the following conditions:

\begin{enumerate}

\item Each tube $T \in \frak T$ intersects between $N$ and $E N$ cubes of $X$.

\item Each cube of $X$ intersects between $\rho$ and $E \rho$ tubes of $\frak T$, for some $\rho \ge 3$.

\item Each point of $\RR^3$ lies in $\le E \rho$ tubes of $\frak T$.

\item (At least three directions of tubes at each point) For each cube $Q \in X$, and for any two unit vectors $v_1, v_2 \in \RR^3$, at least a fraction $E^{-1}$ of the tubes of $\frak T$ that intersect $Q$ have angle $\ge E^{-1}$ with both $v_1$ and $v_2$.

\end{enumerate}

\end{hypoth}

Our results will be interesting when $E$ is much smaller than $N$: the reader may take $E = 100$ as a good special case.

Hypothesis (1) says that $X$ covers a significant fraction of each tube $T \in \frak T$.  Hypotheses (2) and (3) say that the density of tubes is uniform over the set $X$, and also that $X$ is the region of highest density.  These are technical hypotheses, and it may be possible to weaken or remove them.  Hypothesis (4) says that the tubes through a given $Q \in X$ point in at least three different directions in a quantitative sense.  This is a crucial hypothesis as we will see below.

Let's compare these hypotheses to the hypotheses for a Kakeya set. 
In Hypotheses \ref{uniform3transintro}, we don't need to assume that the number of tubes is $N^2$, and we don't need to assume that the tubes point in different directions.  We assume instead some uniformity, and we assume that the tubes through a given cube point in at least three directions.  A Kakeya set does not necessarily obey Hypotheses \ref{uniform3transintro}, but I hope that these additional hypotheses are fairly minor.  On the other hand, there are sets of tubes that are not Kakeya sets but which obey Hypotheses \ref{uniform3transintro}.  We will give a simple example later in the introduction.

Under these hypotheses, we will prove planiness and graininess estimates in the spirit of \cite{KLT}.  Planiness says that for a typical cube $Q \in X$, most of the tubes of $\frak T$ through $Q$ lie near to a plane $\pi(Q)$.  
We will prove the following planiness estimate using the polynomial method:

\begin{prop} \label{planypropintro} Assume Hypotheses \ref{uniform3transintro}.  Let $\epsilon > 0$.  For each cube $Q \in X$, we can choose a plane $\pi(Q)$ through $Q$, so that for a fraction $(1 - \epsilon)$ of cubes $Q \in X$, for a fraction $(1 - \epsilon)$ of the tubes $T \in \frak T$ that meet $Q$, $\Angle(v(T), \pi(Q)) \le \Poly(E, \epsilon^{-1}) N^{-\sigma}$.
\end{prop}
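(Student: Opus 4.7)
The plan is to model the union of tubes by the zero set of a polynomial $P$ of degree $D \lesssim \Poly(E, \epsilon^{-1}) N^{1-\sigma}$, and then take $\pi(Q)$ to be the tangent plane to $Z(P)$ at a well-chosen point of $Q$. Once such a $P$ is in hand, the angle bound in Proposition~\ref{planypropintro} reduces to a one-variable polynomial-derivative estimate along each tube. I would organize the argument into three stages.

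\emph{Stage 1: polynomial construction.} A parameter count produces a nonzero polynomial $P$ of degree $\sim N^{1-\sigma/3}$ vanishing at a chosen point of every cube of $X$, since the space of polynomials of degree $\le D$ has dimension $\sim D^3$ and $|X| = N^{3-\sigma}$. To push the degree down to $N^{1-\sigma}$ I would run a degree-reduction argument in the spirit of \cite{GK}: if $\deg P = D \gg N^{1-\sigma}$, then $P$ restricted to the central line of a typical tube $T$ is a univariate polynomial of degree $D$ that is small at each of the $\ge N$ cubes of $X \cap T$, so either $P$ vanishes on most of the line or one of its directional derivatives, of strictly smaller degree, inherits the vanishing. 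Hypothesis (4) is used crucially here: at each cube of $X$, three quantitatively different tube directions lie near $Z(P)$, which forces $Z(P)$ to be honestly two-dimensional and prevents iterated pruning from collapsing it. Iterating drives the degree down to the target $\Poly(E, \epsilon^{-1}) N^{1-\sigma}$. This is the main obstacle of the proof.

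\emph{Stage 2: assigning planes.} For a $(1 - \epsilon)$-fraction of cubes $Q \in X$ one may choose a point $p_Q \in Q$ at which $Z(P)$ is smooth and $|\nabla P(p_Q)|$ is comparable to $\|P\|_{L^\infty(B)}$ on the ambient ball $B$; the exceptional cubes are absorbed into the $\epsilon$ loss via a pigeonhole on the critical locus, whose components have controlled algebraic complexity because $\nabla P$ has components of degree $\le D - 1$. Define $\pi(Q)$ to be the affine plane through $Q$ with normal $\nabla P(p_Q)$.

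\emph{Stage 3: the angle estimate.} Fix such a $Q$ and a tube $T$ through $Q$, and parametrize the central line of $T$ by arc length $s \in [0, EN]$. Then $f(s) = P|_T(s)$ is a univariate polynomial of degree $\le D \ll N$ that is small at $\ge N$ points spaced $\ge 1$ apart along $[0, EN]$. A Remez / Markov-type inequality for one-variable polynomials forces $|f'(s)| \lesssim (D/N) \|P\|_{L^\infty(B)} \lesssim N^{-\sigma} \|P\|_{L^\infty(B)}$ at most $s$. Since $f'(s) = v(T) \cdot \nabla P$ evaluated on the central line of $T$, and since $p_Q$ lies within distance $1$ of that line (so the difference of $\nabla P$ at the two points is controlled by another application of Markov), we conclude
\[
\Angle(v(T), \pi(Q)) \;=\; \frac{|v(T) \cdot \nabla P(p_Q)|}{|\nabla P(p_Q)|} \;\lesssim\; \Poly(E, \epsilon^{-1}) N^{-\sigma},
\]
with the normalization from Stage~2 absorbing the remaining factor of $\|P\|_{L^\infty(B)}/|\nabla P(p_Q)|$, and a union bound over exceptional $(Q,T)$ pairs yielding the claimed $(1 - \epsilon)^2$ proportion.
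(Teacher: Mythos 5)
There is a genuine gap, and it sits at the heart of Stages 2--3. Your angle bound needs a \emph{relative} estimate at the chosen point, $|v(T)\cdot\nabla P(p_Q)|\le \Poly(E,\epsilon^{-1})N^{-\sigma}\,|\nabla P(p_Q)|$, but your argument bounds the numerator by $N^{-\sigma}\|P\|_{L^\infty(B)}$ (a sup over the whole ball of radius $EN$) and then requires, for a $(1-\epsilon)$-fraction of cubes, a point $p_Q\in Z(P)\cap Q$ with $|\nabla P(p_Q)|\gtrsim \|P\|_{L^\infty(B)}$. That is false in general: for a polynomial normalized on a ball of radius $\sim N$, the gradient at interior points of the zero set is typically minuscule compared with the global sup (take $P$ a product of many linear factors; at a zero deep inside the ball the gradient is the product of the remaining factors there, while $\|P\|_{L^\infty(B)}$ is attained far away), and no pigeonhole on the critical locus rescues this. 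Two further problems in Stage 3: the Markov inequality on an interval of length $\sim N$ gives $|f'|\lesssim (D^2/N)\|f\|$, not $(D/N)\|f\|$, which is useless for $\sigma<1$ (a Bernstein-type bound at interior points gives the right power of $D$ but only in terms of the sup norm, and still leaves the normalization problem); and the points where $P$ vanishes lie somewhere in the unit cubes, not on the central line of $T$, so transferring smallness to $f=P|_{\mathrm{line}}$ already needs control of $\nabla P$ of the very kind you are trying to prove --- the argument is circular.

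Stage 1 is also not an argument as it stands, and the difficulty it skips is exactly the one the paper is organized around. Vanishing at one point per cube gives only $\Deg P\lesssim |X|^{1/3}=N^{1-\sigma/3}$ by parameter counting, and there is no contagious-vanishing mechanism for points sitting inside unit cubes along a tube (the paper's example $y=10^{-100}x^{10}$ shows even approximate bisection fails to propagate); ``prune by directional derivatives'' does not supply one, and Hypothesis (4) does not enter in the way you describe. The paper's route replaces point vanishing by the ``cuts at scale $r$'' bisection condition, proves a vanishing lemma for tubes that tolerates $\sim D$ bad segments, and runs a random-sampling degree reduction (Theorem \ref{degredtube}) to reach $\Deg P\lesssim \Poly(E,\epsilon^{-1})N^{1-\sigma}$. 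Crucially, cutting gives $\Area(Z(P)\cap Q)\gtrsim 1$ in each good cube, so along a tube $T$ the area of $Z\cap T^{+}$ is $\gtrsim N$, while the projection/Crofton bound $\int_{Z\cap T^{+}}|v(T)\cdot N|\le \pi R^{2}\Deg P$ forces $Z$ to be nearly tangent to $v(T)$ on average; averaging over $(Q,T)$ and using Hypothesis (4) to pick two $E^{-1}$-transverse tubes through $Q$ whose directions span $\pi(Q)$ finishes the proof. This area-versus-degree mechanism is what your point-vanishing model cannot reproduce: knowing $P=0$ at one point per cube gives no lower bound on the area of $Z(P)$ in the tube and hence no tangency information, independently of the normalization issue above.
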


This Proposition could also be proven using the multilinear Kakeya estimates in \cite{BCT} (or \cite{Gu1}), but we will give a slightly different proof below.

Our main result controls how the plane $\pi(Q)$ rotates as we vary $Q$ within a segment of a tube $T$.  

\begin{theorem} \label{graininessintro} Assume Hypotheses \ref{uniform3transintro}.  Let $\epsilon > 0$.  
Also assume that $N^\sigma$ is larger than some large polynomial in $E, \epsilon^{-1}$.
Then there is a large constant $K = \Poly(E, \epsilon^{-1})$ so that the following holds.
For a fraction $(1 - \epsilon)$ of intersecting pairs $(Q,T) \in X \times \frak T$, for a fraction $(1 - \epsilon)$ of the cubes $Q'$ of $X$ which intersect $T$ with $\Dist(Q,Q') \le K^{-1} N^\sigma$,

$$ \Angle (\pi(Q), \pi(Q')) \le K N^{- \sigma}. $$

\end{theorem}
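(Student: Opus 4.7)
The plan is to prove Theorem \ref{graininessintro} by explicitly constructing a polynomial $P$ of degree $D \lesssim N^{1-\sigma}$ that vanishes on most of $\bigcup_{T \in \frak T} T$, identifying the plane $\pi(Q)$ with the tangent plane $\nabla P(Q)^\perp$, and then bounding the rotation of this tangent plane along each tube via the degree of $P$. This mirrors the strategy of \cite{GK}, adapted from lines to tubes.

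\textbf{Step 1 (polynomial construction).} Following the polynomial method, I would produce $P$ by parameter counting. The space of polynomials of degree $D$ on $\RR^3$ has dimension $\sim D^3 \sim N^{3-3\sigma}$, while requiring $P$ to vanish at a suitable subsample of size $\sim N^{3-3\sigma}$ drawn from the $N^{3-\sigma}$ cubes of $X$ produces a nonzero solution. Restricting $P$ to the central axis of any tube $T \in \frak T$ gives a univariate polynomial of degree $\le D$; if the subsample is arranged so that a $(1-\epsilon)$-fraction of tubes contain more than $D$ sampled cubes, then $P$ must vanish identically on the entire axis of each such tube. A cell-decomposition in the style of \cite{GK} is likely needed to ensure such uniform coverage across $\frak T$, using Hypotheses (1)--(3).

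\textbf{Step 2 (identifying $\pi(Q)$ with $\nabla P(Q)^\perp$).} For a good cube $Q \in X$ meeting several good tubes $T_1, T_2, T_3 \in \frak T$ of quantitatively transverse directions (provided by Hypothesis (4)), vanishing of $P$ along each $T_i$ forces $\nabla P(Q) \cdot v(T_i) \approx 0$. Combined with Proposition \ref{planypropintro}, the three directions span $\pi(Q)$ up to error $\Poly(E, \epsilon^{-1}) N^{-\sigma}$, so $\nabla P(Q)$ is forced to be (nearly) normal to $\pi(Q)$. I would then redefine $\pi(Q) := \nabla P(Q)^\perp$, which perturbs $\pi(Q)$ by only $\Poly(E, \epsilon^{-1}) N^{-\sigma}$ and hence preserves the planiness estimate.

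\textbf{Step 3 (rotation along a tube).} Fix a good $(Q, T)$ and parameterize the central axis of $T$ by $\gamma(t) = Q + tv(T)$, $t \in [0, EN]$. Since $P \equiv 0$ along $\gamma$, the vector $F(t) := \nabla P(\gamma(t))$ is perpendicular to $v(T)$ and is a polynomial in $t$ of degree $\le D-1$ taking values in the $2$-plane $v(T)^\perp$. The angle of $F(t)$ in $v(T)^\perp$ is precisely the dihedral angle of $\pi(\gamma(t))$ about $v(T)$. Since $F/|F|$ is a rational map into $S^1$ of degree $\le D-1$, its total angular variation over $[0, EN]$ is $O(D) = O(N^{1-\sigma})$, giving an average angular speed $\sim N^{-\sigma}/E$. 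Excising the small neighborhoods of the singular points (where $|F|$ is small) and applying Markov's inequality to both the choice of $(Q,T)$ and the choice of $Q'$ on $T$ within $K^{-1}N^\sigma$ of $Q$ should then yield $\Angle(\pi(Q), \pi(Q')) \le K N^{-\sigma}$ for a $(1-\epsilon)^2$-fraction of such triples, provided $K = \Poly(E, \epsilon^{-1})$ is chosen large enough.

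\textbf{Main obstacle.} The hardest step is the polynomial construction in Step 1: while parameter counting easily gives some $P$ of the correct degree, ensuring that $P$ vanishes on the full central axis of a $(1-\epsilon)$-fraction of tubes (rather than on scattered cubes) requires a careful sampling procedure and likely an iterative cell-decomposition, together with strong use of the uniformity Hypotheses (1)--(3) to exclude exceptional tubes. A secondary difficulty is handling the near-singular points of $\{P=0\}$, where $|\nabla P|$ may be small and the identification $\pi(Q) = \nabla P(Q)^\perp$ degenerates; one must show such cubes comprise at most an $\epsilon$-fraction of $X$ so that the identification is reliable almost everywhere.
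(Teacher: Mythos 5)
Your Step 1 contains a fatal gap, and the paper's introduction explains exactly why: a nonzero polynomial cannot be forced to vanish identically on the central axes of most tubes, and the naive vanishing lemma fails for tubes. Your contagion argument does not even get started: the sampled cubes merely \emph{intersect} a tube, so the points where $P$ vanishes need not lie on its central axis, and the axes of two distinct tubes in $\RR^3$ generically do not meet at all, so vanishing cannot spread from the axes of the subsampled tubes to the axes of the other tubes. More structurally, the hypotheses are stable under perturbing each tube by a fraction of its unit radius, while exact containment of a line in a fixed degree-$N^{1-\sigma}$ surface is destroyed by such perturbations; in the paper's slab example the tubes hug a union of planes only approximately, and their axes lie on no low-degree surface. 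This is precisely why the paper replaces ``$P$ vanishes on the tube'' by the weaker notion that $P$ \emph{cuts} (nearly bisects) cubes at a small scale, proves a vanishing lemma for tubes that unavoidably allows $\sim \Deg P$ bad segments per tube (Lemma \ref{vanishtube}), and runs the degree reduction (Theorem \ref{degredtube}) in that approximate framework.

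Even granting Step 1, your Step 3 is quantitatively insufficient. The total angular variation of $F/|F|$ along a tube of length $\sim EN$ is $O(D) = O(N^{1-\sigma})$, so the \emph{average} rotation rate is $\sim N^{-\sigma}$ per unit length; over a segment of length $K^{-1}N^{\sigma}$ a Markov-type argument then yields a rotation of order a constant, not $K N^{-\sigma}$. The theorem demands a rotation rate $\sim N^{-2\sigma}$ per unit length on typical segments, a gain of a full factor $N^{\sigma}$ that cannot come from the degree bound along a single tube. In the paper this gain is exactly the content of the curvature estimate $|A_x| \lesssim K N^{-2\sigma}$ on $Z(P)$, whose proof uses the ``three quantitatively different directions'' hypothesis in an essential way (at most two directions through a point can be straight, so some tube direction is far from straight), together with the analysis of random planar slices of tube segments avoiding the bad algebraic curves $Tan(W)$, $GFl$, $Str(W)$, $Eig(W)$, $A(H)$, and the near-tangency of the tube directions to $Z$ at both ends of the segment. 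Your proposal never uses Hypothesis (4) beyond fixing the plane $\pi(Q)$, and without it the conclusion is actually false at scale $N^{\sigma}$ (see the regulus example in the introduction), so no argument along the lines of Step 3 alone can close the proof.
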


This control of $\pi(Q)$ forces tubes to cluster into slabs of dimensions $1 \times N^\sigma \times N^\sigma$.  Consider a typical $T \in \frak T$ and a segment $Seg \subset T$ of length $\sim N^\sigma$ containing $\sim N^\sigma$ cubes of $X$.  Let $Q$ be one of these cubes, and consider a slab $Slab$ with dimensions $1 \times N^\sigma \times N^\sigma$, parallel to $\pi(Q)$, and containing $Seg$.  Almost all tubes of $\frak T$ through $Q$ must lie in this slab for length $\sim N^\sigma$.  Theorem \ref{graininessintro} says that $\pi(Q')$ is $N^{-\sigma}$-close to $\pi(Q)$ for the other $Q'$ in $Seg$.  Therefore, almost all the tubes of $\frak T$ that pass through $Seg$ lie in $Slab$ for a length $\sim N^\sigma$.  Moreover, if $T_1$ is another (typical) tube that passes through $Seg$, and $Seg_1$ is the intersection of $T_1$ with our slab, then Theorem \ref{graininessintro} says that $\pi(Q_1)$ is $N^{-\sigma}$ close to $\pi(Q)$ for almost all $Q_1 \in Seg_1$, and so almost all the tubes through $Seg_1$ also lie in $Slab$ for a length $\sim N^\sigma$.  This slab is sometimes called a grain for the set of tubes $\frak T$.  

Let us compare Theorem \ref{graininessintro} with the graininess estimate in \cite{KLT}.  In some ways, Theorem \ref{graininessintro} is more general.  
It applies to sets of tubes with total volume $N^{3 - \sigma}$ for any $\sigma > 0$.  It involves hypotheses only at one scale instead of hypotheses at several scales.  It also applies to some sets of tubes that don't point in different directions.  On the other hand, it does have some technical assumptions about the uniformity of the density of tubes, which are not needed in \cite{KLT}.  Moreover, the graininess estimate of \cite{KLT} proves something stronger.  It proves that in a typical ball of radius $\sim N^{1/2}$, the Kakeya set resembles a set of  {\it parallel} slabs of dimension $1 \times N^{1/2} \times N^{1/2}$.  We can't prove that nearby slabs are parallel, because we are only able to control how $\pi(Q)$ varies as we move $Q$ along a tube of $\frak T$.

\subsection{Degree reduction}

The proof of Theorem \ref{graininessintro} uses the polynomial method.  We find a polynomial $P$ of controlled degree whose zero set $Z(P)$ is a good model for the set of cubes $X$, and then we use $Z(P)$ to study the tubes and cubes.  Here is a precise statement about the existence of a polynomial of controlled degree that models $X$.

\begin{theorem} \label{degredtubeintro} Assume Hypotheses \ref{uniform3transintro}.   Let $\epsilon > 0$.  Then there is a non-zero polynomial $P$ of degree $\le \Poly(E, \epsilon^{-1}) N^{1 - \sigma}$, so that for $(1 - \epsilon) |X|$ cubes $Q \in X$, the area of $Z(P) \cap Q$ is at least 1.
\end{theorem}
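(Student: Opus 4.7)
The plan is a two-step construction: first build a moderate-degree polynomial whose zero set bisects every cube of $X$, then reduce the degree to the target $\Poly(E, \epsilon^{-1}) N^{1-\sigma}$ while retaining the large-area property on most cubes.

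For the initial construction, I would apply the polynomial ham sandwich theorem to the $|X| = N^{3-\sigma}$ measures given by Lebesgue measure restricted to each unit cube of $X$. This yields a non-zero polynomial $P_0$ of degree $D_0 = O(|X|^{1/3}) = O(N^{1 - \sigma/3})$ whose zero set bisects each cube of $X$ by volume. The isoperimetric inequality on the unit cube then implies $\Area(Z(P_0) \cap Q) \ge c_0$ for every $Q \in X$, with $c_0 > 0$ absolute; a bounded dilation arranges $\Area \ge 1$. Note that the target degree is of the right order: the Crofton-type surface-area bound $\Area(Z(P) \cap B_R) \le C D R^2$ applied to the ball of radius $EN$ forces any polynomial satisfying the conclusion to have degree $\gtrsim E^{-2} N^{1-\sigma}$, so the theorem is asserting essentially the matching upper bound.

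The reduction itself is the substance of the theorem. Given $P$ of degree $D$ with $\Area(Z(P) \cap Q) \ge 1$ for $(1-\epsilon')|X|$ cubes and $D$ still substantially larger than $D^* := \Poly(E, \epsilon^{-1}) N^{1-\sigma}$, I would seek a polynomial $P'$ of strictly smaller degree with the area-$\ge 1$ property on $(1 - \epsilon' - \delta)|X|$ cubes, and iterate. The reduction exploits the three-direction condition of Hypotheses \ref{uniform3transintro}: along a typical tube $T \in \frak T$, most cubes in $T$ have large area in $Z(P)$, so $Z(P)$ is approximately tangent to $v(T)$ over most of $T$. This forces the directional derivative $v(T) \cdot \nabla P$ (of degree $D - 1$) to be small on $Z(P) \cap T$ in a quantitative sense. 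Combining three such tangency relations at each cube (from three transverse tubes through it) with a multilinear-Kakeya-style averaging applied to the surface $Z(P)$ and the three families of tubes, one should conclude that on most cubes $P$ either factors nontrivially or admits a lower-degree polynomial (a partial derivative, a suitable linear combination of partials, or a factor) that inherits the area-$\ge 1$ property on almost all the original cubes.

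The main obstacle is making the tangency argument quantitative. Tubes have radius $1$ while $Z(P)$ may have many small-scale features, so tangency of $Z(P)$ to $v(T)$ is only approximate and must be measured in the right integrated sense. The three-direction hypothesis is what drives the reduction, since three linearly independent tangent directions at a smooth surface point are incompatible: the surface must either degenerate (yielding a lower-degree equation from multiplicity or a factor) or fail tangency on a small fraction of cubes. Turning this geometric incompatibility into a concrete degree drop, while losing only a small fraction of cubes per iteration and keeping the total loss across all iterations under $\epsilon$, is where I expect the technical heart of the argument to lie.
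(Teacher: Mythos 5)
Your first step (polynomial ham sandwich on the $|X|$ cubes, degree $O(|X|^{1/3})=O(N^{1-\sigma/3})$, area $\gtrsim 1$ per bisected cube) and your Crofton sharpness remark are fine, but the entire content of the theorem is the passage from exponent $1-\sigma/3$ to $1-\sigma$, and there your proposal has a genuine gap: there is no concrete mechanism for the degree drop. The suggestion that approximate tangency of $Z(P)$ to three transverse tube directions forces $P$ to ``factor nontrivially or admit a lower-degree polynomial (a partial derivative, a suitable linear combination of partials, or a factor) that inherits the area-$\ge 1$ property'' is not an argument: a partial derivative or a factor of $P$ has no reason to have large zero-set area in the same cubes, and even granting some drop per iteration, the degree must fall by a power of $N$, so the per-iteration loss of cubes would have to be summable over an unbounded number of iterations --- nothing in the plan controls this. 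You have also misattributed the driving hypothesis: the paper's degree reduction (Theorem \ref{degredtube}) does not use the three-direction condition at all, only the weaker transversality condition that a positive fraction of the tubes through each cube avoid any single direction; indeed the paper points out that degree reduction applies to the regulus example where only two directions occur. The three-direction hypothesis enters only later, in the curvature/graininess part.

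The actual proof is structurally different: it never takes a high-degree polynomial and lowers its degree; it constructs the low-degree polynomial directly and spreads its good behavior contagiously. One fixes $D\sim \Poly(E,\epsilon^{-1})|X|N^{-2}\sim N^{1-\sigma}$, picks a random subfamily $\frak T_1\subset\frak T$ of roughly $D^2$ tubes, chooses roughly $D$ (in fact $K^{1/2}D$) evenly spaced cubes of $X$ on each, and applies the ham sandwich/parameter-counting lemma only to these $\sim D^3$ cubes, obtaining $P$ with $\Deg P\lesssim D$ that \emph{cuts} each chosen cube at a small scale $r$ --- the strengthened notion (near-bisection of every ball of radius in $[r,1]$ centered within distance $1/r$ of the cube) is essential because plain bisection is not contagious. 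The key lemma you are missing is the vanishing lemma for tubes (Lemma \ref{vanishtube}): if $P$ cuts far more than $\Deg P$ cubes along a tube at scale $r$, then every unit cube meeting all but $\lesssim \Deg P$ of the intervening tube segments is cut at scale $2r$; its proof compares each ball to its vertical translate into a cut cube, using that the projection of $Z(P)$ along the tube direction has area $\lesssim \Deg P$. This propagates cutting along the sampled tubes, and then a second round propagates it to a typical tube $T'\in\frak T$: the transversality hypothesis makes the events ``$Q_j'$ meets a tube of $\frak T_1$ at angle $\ge E^{-1}$'' independent along $T'$, giving with high probability $\gg D$ well-spaced good cubes on $T'$, whence the vanishing lemma again shows $P$ cuts all but a $K^{-(1/2)+}$ fraction of the cubes on $T'$; averaging over tubes gives $(1-\epsilon)|X|$ cubes cut at scale $\epsilon$, and cutting at a small scale yields $\Area(Z(P)\cap Q)\ge 1$. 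Without the cutting definition, the tube vanishing lemma, and the two-round random sampling/contagion scheme, your plan cannot be completed as stated.
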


This degree estimate is sharp up to a constant factor.  The Crofton formula says that the area of $Z(P)$ in a ball of radius $R$ is $\lesssim (\Deg P) R^2$.  Therefore, if $X$ is any set of $N^{3 - \sigma}$ disjoint unit cubes in a ball of radius $\sim N$, and if $Z(P)$ has area at least $ 1$ in most cubes of $X$, then $\Deg P \gtrsim N^{1 - \sigma}$.  

Theorem \ref{degredtubeintro} says that Kakeya-type sets can be modelled by a polynomial of the lowest plausible degree.  In other words, they have a lot of algebraic structure.  We try to exploit this algebraic structure to control the geometry of the tubes.  We are able to get a lot of information about the geometry at scales $\lesssim N^{\sigma}$, proving Theorem \ref{graininessintro}.  

\subsection{Simple examples}

Let's consider a couple examples to illustrate our results.  First suppose that $X$ is a set of unit cubes tiling a rectangular slab of dimensions $N^{1 - \sigma} \times 2N \times 2N$.  There are many tubes that intersect at least $N$ cubes of $X$.  It's not hard to choose a family $\frak T$ of such tubes so that each cube of $X$ lies in $N^{2 - \sigma}$ tubes of $\frak T$ with directions separated by angle $\gtrsim 1/N$.  This $X$ and $\frak T$ obeys Hypotheses \ref{uniform3transintro}.  For each cube $Q \in X$, the directions of the tubes of $\frak T$ through $Q$ lie within an angle $N^{-\sigma}$ of the $x_2 x_3$-plane.  This shows that the estimate in Proposition \ref{planypropintro} cannot be improved.  

In this example, the plane $\pi(Q)$ can be chosen to be the $x_2 x_3$-plane for all $Q \in X$.  Nevertheless, the size of the `grains' in this example is only $1 \times N^\sigma \times N^\sigma$.  If we take a segment $Seg \subset T \in \frak T$ of length $N^\sigma$, and if we take the $1 \times N^\sigma \times N^\sigma$ slab through $Seg$ parallel to the $x_2 x_3$-plane, then if any other tube $T' \in \frak T$ intersects $Seg$, then a segment of $T'$ of length $\sim N^{\sigma}$ lies in our slab.  In this sense, we have grains of size $1 \times N^\sigma \times N^\sigma$, and the grains cannot be made any larger.

In this example, the zero set $Z(P)$ could be a union of $N^{1 - \sigma}$ planes parallel to the $x_2 x_3$-plane, with $x_1$ coordinate equal to $1, 2, ..., N^{1 - \sigma}$.  We could also take a union of $\sim N^{1 - \sigma}$ planes that are not quite parallel to each other.  In any case, a typical tube of $\frak T$ hugs one plane for a length $\sim N^\sigma$, then shifts to another plane and hugs it and so on.  In our proof of Theorem \ref{graininessintro}, we will see that something like this picture occurs in general.  A typical tube of $\frak T$ hugs a nearly flat piece of $Z(P)$ for a length of $\sim N^\sigma$, then shifts to another nearly flat piece of $Z(P)$ and hugs it for a length $\sim N^\sigma$, and so on.  These nearly flat pieces of $Z(P)$ with diameter $\sim N^\sigma$ are the grains.

We consider another situation to show that we really need the tubes of $\frak T$ through a given $Q \in X$ to point in three different directions.  Consider the regulus defined by the equation $x_3 = x_1 x_2 / N$,  and then consider a neighborhood of this regulus given by $ | x_3 - x_1 x_2 /N| \le N^{1 - \sigma}$, $|x_1|, |x_2| \le N$.  Then we can let $X$ be the lattice unit cubes that intersect this neighborhood.  There are many tubes that intersect $\gtrsim N$ cubes of $X$.  Take any line in the regulus, thicken it to a tube, and then translate the tube vertically by a distance $\le N^{1 - \sigma}$.  We can choose $\frak T$ so that $X$ and $\frak T$ obey all of Hypotheses \ref{uniform3transintro}, except that the tubes through a given cube $Q \in X$ point in only two directions and not three directions.  In this case, $X$ and $\frak T$ have grains only at scale $N^{1/2}$.  If $\sigma > 1/2$, then the size of the grains is significantly smaller than $N^\sigma$.

In this second example, the degree reduction argument still applies.  The full degree reduction theorem, Theorem \ref{degredtube}, is more general than Theorem \ref{degredtubeintro}, and it applies to this example.  In this case, the surface $Z(P)$ could be $N^{1 - \sigma}$ parallel reguli.  But in this case, since there are only two tubes of $\frak T$ through a typical cube $X$, we are not able to get the same estimates for the curvature of $Z(P)$.

\subsection{Main ideas of the proof}

The proofs of our theorems are based on the arguments in \cite{GK}, but adapted to study tubes instead of lines.  We recall the outline of the proof of Theorem \ref{gk} from \cite{GK}, and we explain the main issues in adapting the proof to tubes.

To prove Theorem \ref{gk}, we consider a set $\frak L$ of $N^2$ lines in $\RR^3$, and a set $X$ with far fewer than $N^3$ points, where each line of $\frak L$ contains $N$ points of $X$.  We have to prove that many lines of $\frak L$ cluster in a plane.

The first step of the proof of Theorem \ref{gk} is a degree reduction argument.  We study the polynomials that vanish on the lines $\frak L$.  For any set of $N^2$ lines in $\RR^3$, there is a polynomial of degree $\sim N$ that vanishes on the lines.  But if $X$ is much smaller than $N^3$, then we can find a polynomial of much smaller degree.

The degree reduction involves two observations.  First, by a dimension counting argument, we can find a polynomial that vanishes at any $S$ points of $\RR^3$ with degree $\lesssim S^{1/3}$.  Once we have a polynomial that vanishes at some points, we can sometimes force it to vanish at other points by using the following simple vanishing lemma:

{\bf Vanishing Lemma.} If a polynomial $P$ vanishes at $ > \Deg P$ points on a line $l$, then it vanishes on the entire line.  

\noindent 

In particular, we let $P$ be a polynomial that vanishes on $X$ with degree $\lesssim |X|^{1/3}$, much smaller than $N$.  Since each line of $\frak L$ contains $N > \Deg P$ points of $X$, we see that $P$ vanishes on all the lines of $\frak L$.  We call this a contagious vanishing argument: the vanishing of $P$ spreads from the points of $X$ to the lines of $\frak L$.  In the paper below, we will use a more complicated contagious vanishing argument from \cite{GK} that gives a stronger estimate on the degree.  

Let's pause and discuss what happens when we replace lines by tubes and points by unit cubes.  We are immediately faced by a question: what does it mean for a polynomial to `vanish at a cube'.  If a polynomial is not identically zero, then it cannot vanish at every point of a cube.  The paper \cite{Gu1} suggested an approach to this issue.  We look for a polynomial that roughly bisects the cube, in the sense that $P > 0$ on roughly half the cube, and $P < 0$ on roughly half the cube.  If $P$ bisects a unit cube, then the area of $Z(P)$ in the cube is $\gtrsim 1$.  The generalized ham sandwich theorem \cite{ST} says that 
for any $S$ cubes in $\RR^3$, there is a polynomial that bisects all $S$ cubes with degree $\lesssim S^{1/3}$.

When we try to adapt the degree reduction argument to tubes, we need to generalize the vanishing lemma above to the context of cubes and tubes.  We may start with the following question: if a polynomial $P$ bisects $> \Deg P$ cubes along a tube, does it follow that $P$ (roughly) bisects all the cubes along the tube?  The answer is no.  This is a main source of difficulties in generalizing the arguments of \cite{D} from lines to tubes.  
For instance, consider the degree 10 plane curve $y = 10^{-100} x^{10}$.  For $|x| < 10^{9}$, this curve is very close to the $x$-axis, and it roughly bisects many unit squares along the $x$-axis.  But around $|x|=10^{10}$, the curve swerves sharply away from the $x$-axis, and it does not bisect any square of the $x$-axis farther out than this.  So the simplest generalization of the vanishing lemma to tubes fails.  But we will prove that a weaker statement still holds.  

To get a feel for this weaker statement, we first consider a simpler question in a similar spirit.  Let $\delta > 0$ be a small number, and suppose that a polynomial $P$ obeys $|P(x_j )| < \delta$ at $> \Deg P$ points $x_j$ along a line $l$.  Does it follow that $|P(x) | < \delta$ along the entire line $l$?  Again, the answer is easily seen to be no.  However, on the line $l$, the polynomial $P$ can take the value $\delta$ at most $\Deg P$ times, and it can take the value $- \delta$ at most $\Deg P$ times.  Therefore, if $|P(x_j)|  < \delta$ at $100 \Deg P$ points of a line $l$, then $|P(x)| < \delta$ on most of the line segments between these points.  

The vanishing lemma for tubes is in this spirit.  Roughly speaking, we will prove that if $P$ approximately bisects far more than $ \Deg P$ unit cubes along a tube, then $P$ approximately bisects the unit cubes in most of the `tube segments' between these cubes.  Lemma \ref{vanishtube} gives the precise statement.
This vanishing lemma for tubes is much weaker than the one for lines, but it is still strong enough to carry out the degree reduction argument, proving Theorem \ref{degredtubeintro}.

We now return to our outline of the proof of Theorem \ref{gk}.  We have found a polynomial $P$ that vanishes on the lines of $\frak L$ with good control of the degree.  Next we study its zero set: $Z(P)$.
An average point of $X$ lies in many lines of $\frak L$.  For the purposes of this discussion, we assume that each point of $X$ lies in at least three lines of $\frak L$.  Next we note that each point of $X$ must be a special point of the surface $Z(P)$.  If the lines of $\frak L$ through $x \in X$ are not coplanar, then $x$ must be a singular point of $Z(P)$.  If the lines of $\frak L$ through $x$ are coplanar, and if $x$ is a regular point of $Z(P)$, then $x$ must be a flat point of $Z(P)$ - a point where the second fundamental form of $Z(P)$ vanishes.

First we discuss singular points.  Singular points are contagious.  If a line $l \subset Z(P)$ contains more than $\Deg P$ singular points, then every point of $l$ is singular.   Moreover, $Z(P)$ can contain at most $\sim (\Deg P)^2$ singular lines, and $(\Deg P)^2$ is far less than $N^2$.  Therefore, most of the lines of $\frak L$ contain $< \Deg P$ singular points.  Since $\Deg P$ is far less than $N$, most lines of $\frak L$ contain nearly $N$ flat points.

This part of the argument generalizes to tubes using the methods of \cite{Gu1}.  It is closely related to planiness.  For most cubes $Q$ in $X$, Proposition \ref{planypropintro} says that most of the tubes of $\frak T$ passing through $Q$ lie within a small angle  of a certain plane $\pi(Q)$.  This says that for most cubes $Q$, the tubes through $Q$ are morally coplanar.  Here is an outline of the proof of Proposition \ref{planyprop}.
Fix a tube $T \in \frak T$.  We know that $X$ contains $\gtrsim N$ unit cubes that intersect $T$, and in each of these cubes $Z(P)$ has area $\gtrsim 1$.  Therefore, the area of $Z(P) \cap T$ is $\gtrsim N$.  But $\Deg P \lesssim N^{1 - \sigma}$, and so almost any line parallel to the center line of $T$ intersects $Z(P)$ at most $\Deg P \lesssim N^{1 - \sigma}$ times.  The only way that this can happen is for $v(T)$ to be nearly tangent to $Z(P)$ at most points of $Z(P) \cap T$.  Now consider a typical cube $Q$, which lies in several tubes.  Let $T_1$ and $T_2$ be cubes through $Q$ in quantitatively different directions.  Typically, at most points of $Z(P) \cap Q$, $T Z(P)$ makes a small angle with both $v(T_1)$ and $v(T_2)$.  There is a unique plane containing $v(T_1)$ and $v(T_2)$, and $TZ(P)$ is close to this plane at most points in $Q$.  This plane is $\pi(Q)$.  Most other tubes through $Q$ are nearly tangent to $TZ(P)$, and so they must be nearly tangent to $\pi(Q)$.  

We again return to the proof of Theorem \ref{gk}.  We have shown that most lines of $\frak L$ contain close to $N$ flat points of $Z(P)$.  We will use these flat points to force the lines of $\frak L$ to cluster into planes.  The following approach is based on \cite{EKS}.  At a flat point $x \in X$, all the lines of $\frak L$ through $x$ lie in a plane $\pi(x)$ which must be the tangent plane of $Z(P)$ at $x$.  Now \cite{GK} proves that flatness is also contagious: if $Z(P)$ is flat at $> 3 \Deg P$ points of a line $l$, then $Z(P)$ is flat at every point of $l$.  So it follows that $Z(P)$ is flat along most lines of $\frak L$.  Fix a line $l \subset Z(P)$ where $Z(P)$ is flat.  By elementary differential geometry, it follows that the tangent plane of $Z(P)$ is constant along $l$.  But then $\pi(x)$ is the same for all $x \in l$.  Call this plane $\pi(l)$.  Now we see that all the other lines that intersect $l$ (at flat points) must lie in the plane $\pi(l)$, and this causes clustering in planes.

The hardest part of this paper is to generalize this argument about flat points from lines to tubes.  Recall that if $x$ lies in three coplanar lines of $\frak L$, and if $Z(P)$ is non-singular at $x$, then $x$ is a flat point of $Z(P)$.  Does this basic result have an analogue for tubes?
Suppose we consider a cube $Q \in X$ lying in three tubes of $\frak T$, $T_1, T_2, T_3$, which are all nearly coplanar.  We can also assume that the angles between the three tubes are $\gtrsim 1$.  Recall that $Z(P)$ roughly bisects all the cubes of $X$, including many cubes in each of the tubes.  Does it follow that the curvature of $Z(P)$ is nearly zero in $Q$?  Perhaps surprisingly, the answer is morally yes.
We will prove that for most cubes $Q \in X$, for most points $x \in Z(P) \cap Q$, the second fundamental form of $Z(P)$ at $x$ has size $\lesssim N^{-2 \sigma}$.  We will also prove that this curvature bound is contagious, and that the second fundamental form is $\lesssim N^{-2 \sigma}$ at many points on the tube segments between the cubes.  This bound on the curvature controls how the tangent plane changes as we move along $Z(P)$, and so it bounds how the plane $\pi(Q)$ rotates as $Q$ moves along a tube $T$.  In particular, if we consider two cubes $Q, Q'$ along a tube $T$ with $\Dist(Q, Q') \lesssim N^\sigma$, then the angle between $\pi(Q), \pi(Q')$ is typically $\lesssim N^{-\sigma}$, proving Theorem \ref{graininessintro}.  

Let us sketch the proof of this curvature estimate.  Suppose for a moment that $Z(P)$ was just a graph of a degree 2 polynomial.  In other words, let's suppose that $Z(P)$ is defined by the equation $x_3 = A(x_1, x_2)$ where $A$ is a homogeneous degree 2 polynomial, and that $Q$ is centered at the origin and $\pi(Q)$ is the plane $x_3 = 0$.   The second fundamental form of $Z(P)$ at the origin is exactly $A$.  Suppose also that $Z(P)$ bisects cubes intersecting the three tubes $T_i$ out to a radius $R$.  Since the tubes $T_i$ make an angle $\lesssim N^{-\sigma}$ with $\pi(Q)$, we see that $A(x_1, x_2) \le R N^{-\sigma}$ for points $(x_1, x_2)$ at radius $R$ in the three tubes in three different directions.  This implies that the coefficents of $A$ are bounded by $N^{-\sigma} R^{-1}$.  How accurate is this model?  After all, $P$ is a high degree polynomial which makes $Z(P)$ a complicated surface.   The main work in the proof is to show that for a typical $Q, T_1, T_2, T_3$, the second fundamental form of $Z(P)$ is morally constant on the three tubes out to a radius $R \sim N^{\sigma}$.  In other words, for typical $Q, T_1, T_2, T_3$, the simple model above is an accurate model of $Z(P)$ on $T_1 \cup T_2 \cup T_3$ restricted to a ball of radius $\sim N^\sigma$.  This estimate depends on the degree bound $\Deg P \lesssim N^{1 - \sigma}$.  Its proof requires a mix of algebraic geometry and differential geometry.

\subsection{Organization of the paper}

In Section 1, we prove a version of the vanishing lemma for tubes.  In Section 2, we use this vanishing lemma to prove a degree reduction theorem for tubes.  This theorem implies Theorem \ref{degredtubeintro}, and it's a little stronger.  In Section 3, we review the Crofton formula - a result of integral geometry that gives bounds on the volumes of algebraic varieties.  We will use the Crofton formula repeatedly to control the geometry of $Z(P)$.  In Section 4, we use the degree reduction theorem to prove our  planiness and graininess estimates, Proposition \ref{planypropintro} and Theorem \ref{graininessintro}.

\section{Parameter counting and the vanishing lemma for tubes}

Suppose that $l$ is a line in $\RR^n$, and that $P$ is a polynomial of degree $\le D$ that vanishes at $> D$ points of $l$.  Then $P$ must vanish on all of $l$.  This basic result is sometimes called a vanishing lemma, and it plays a crucial role in polynomial method arguments about the intersection patterns of lines.

We want to formulate some analogue of this vanishing lemma when the line $l$ is replaced by a cylindrical tube $T$.  In this section we set up an analogy, and then state and prove a version of the vanishing lemma for tubes.  This version is a lot weaker than for lines, but it still has some applications.

Let $T \subset \RR^n$ be a cylindrical tube of radius 1 and arbitrary length.  We think of $T$ as analogous to a line $l$.  Let $Q$ denote a unit cube that intersects $T$.  We think of $Q$ as analogous to a point on $l$.

What does it mean for $P$ to ``vanish" on $Q$?  We build up to our definition in a few steps.  One possible definition was suggested in \cite{Gu1}.
Consider the sets $\{ x \in Q | P(x) > 0 \}$ and $\{ x \in Q | P(x) < 0 \}$.  We say that $P$ bisects $Q$ if 

$$ \Vol \{ x \in Q | P(x) > 0 \} = \Vol \{ x \in Q | P(x) < 0 \} = 1/2. $$

\noindent We could also relax this definition and say that $P$ ``vanishes" on $Q$ if each of these sets has volume at least $1/3$.  The parameter 1/3 is somewhat arbitrary, and we could adjust it.

We will need a definition that is a little stronger.  The stronger definition is somewhat analogous to saying that $P$ vanishes to high order at a point.

\begin{definition} Let $Q$ be a unit cube in $\RR^n$.  We say that a polynomial $P$ cuts $Q$ at scale $r$ if, for each ball $B$ of radius $\rho$ in the range $r \le \rho \le 1$ and with center at distance $\le 1/r$ from $Q$, we have

$$ (\frac{1}{2} - r) \Vol B \le \Vol \{ x \in B | P(x) > 0 \} \le (\frac{1}{2} + r) \Vol B. $$

\end{definition}

The definition has a little to digest.  One main point is that as $r$ gets smaller, the definition gets stronger.  In a sense, there are really three parameters here: the radii of the balls, the distance to $Q$, and the error-tolerance in the near-bisection inequality.  But it's easier to just keep track of one parameter $r$, and we don't lose anything in the arguments in the paper below.  As a rough analogy, $P$ cuts $Q$ at scale $r$ is like saying $P$ vanishes at a point $q$ to order $r^{-n}$.  This stronger condition is more contagious than the simpler condition we started with above.

In the polynomial method, it is important to be able to find polynomials that vanish at given points.  For ordinary vanishing, the most fundamental result of this type is the following parameter-counting lemma.  Let $\Poly_D (\RR^n)$ denote the vector space of polynomials on $\RR^n$ of degree $\le D$.

\begin{lemma} \label{parcount} (Parameter counting) Let $q_1, ..., q_S$ be a set of points in $\RR^n$, and suppose that $S < \Dim \Poly_D(\RR^n)$.  Then there is a non-zero polynomial of degree $\le D$ that vanishes at all the points $q_i$.  
\end{lemma}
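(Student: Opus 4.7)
The plan is to prove Lemma \ref{parcount} by a standard linear algebra argument: evaluation at $S$ points defines a linear map from the finite-dimensional space $\Poly_D(\RR^n)$ to $\RR^S$, and a source strictly larger in dimension than the target must have nontrivial kernel.

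More precisely, I would first note that $\Poly_D(\RR^n)$ is a finite-dimensional real vector space; its dimension is $\binom{D+n}{n}$, though we will not need the explicit value, only that it is finite and, by hypothesis, strictly greater than $S$. Then I would define the evaluation map
\[
\mathrm{ev} : \Poly_D(\RR^n) \longrightarrow \RR^S, \qquad P \longmapsto \bigl(P(q_1), P(q_2), \ldots, P(q_S)\bigr).
\]
This map is linear because pointwise addition and scalar multiplication of polynomials correspond to the same operations on their values.

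Next I would invoke the rank-nullity theorem (or simply the elementary fact that a linear map from a vector space of dimension $> S$ into $\RR^S$ cannot be injective). Since $\Dim \Poly_D(\RR^n) > S = \Dim \RR^S$, the kernel of $\mathrm{ev}$ has dimension at least $\Dim \Poly_D(\RR^n) - S \ge 1$. Therefore there exists a nonzero $P \in \Poly_D(\RR^n)$ with $\mathrm{ev}(P) = 0$, i.e., $P(q_i) = 0$ for every $i = 1, \ldots, S$, which is precisely the conclusion.

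There is no real obstacle here; the only point worth a moment of care is confirming that $\Poly_D(\RR^n)$ is finite-dimensional so that rank-nullity applies, and that evaluation is linear, both of which are immediate. This lemma is a warm-up before the more delicate cutting/bisection analogue needed to prove the vanishing lemma for tubes.
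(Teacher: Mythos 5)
Your proof is correct and is essentially identical to the paper's: both define the evaluation map $E(P) = (P(q_1),\ldots,P(q_S))$ from $\Poly_D(\RR^n)$ to $\RR^S$ and conclude from the dimension count that the kernel is nontrivial. No issues.
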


\begin{proof} Consider the linear map $E: \Poly_D (\RR^n) \rightarrow \RR^S$ defined by $E(P) = (P(q_1), ..., P(q_S))$.  By hypothesis, the dimension of the domain is larger than the dimension of the range, so the linear map $E$ has a non-trivial kernel.  A non-zero element of this kernel is a non-zero polynomial of degree $\le D$ that vanishes at all the points $q_i$.
\end{proof}

We remark that the dimension of $\Poly_D (\RR^n)$ is ${D + n \choose n} \ge D^n / n! \ge D^n / n^n$.  Therefore, for any set of $S$ points in $\RR^n$, we can find a non-zero polynomial vanishing on these points with degree $\le n |S|^{1/n}$.

This lemma has a good analogue for our cutting definition.

\begin{lemma} \label{parcountcut} (Parameter counting for cubes) There is a small constant $c$ and a large power $a$ depending only on the dimension $n$ so that the following holds.  Let $r \in (0,1/2)$ be a real number.  Let $Q_1, ... Q_S$ be a set of unit cubes in $\RR^n$, and suppose that $ S < c r^a \Dim \Poly_D(\RR^n)$.  Then there is a non-zero polynomial of degree $\le D$ that cuts each cube $Q_i$ at scale $r$.  
\end{lemma}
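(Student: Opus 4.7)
The strategy is to replace the continuous family of near-bisection conditions defining ``cuts $Q_i$ at scale $r$'' with a finite list of exact bisection conditions, and then apply the polynomial Stone--Tukey (ham sandwich) theorem. For each cube $Q_i$ I would construct a finite family $\mathcal{B}_i$ of ``sample balls'' with the property that every ball $B$ of radius $\rho\in[r,1]$ centered within distance $1/r$ of $Q_i$ admits a $B'\in\mathcal{B}_i$ with $\Vol(B\triangle B')\le (r/3)\Vol B$. Concretely, take a geometric sequence of radii $\rho_j=r(1+r)^j$ for $0\le j\lesssim r^{-1}\log(1/r)$, and for each $\rho_j$ place centers on a cubical grid of spacing $\sim r\rho_j$ filling the $1/r$-neighborhood of $Q_i$. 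The elementary estimate $\Vol(B\triangle B')\lesssim \rho^{\,n-1}(|c-c'|+|\rho-\rho'|)$ verifies the approximation property, and a direct count gives $|\mathcal{B}_i|\le C r^{-a_0}$ for some $a_0=a_0(n)$ and constant $C=C(n)$.

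Next let $\mathcal{B}=\bigcup_i\mathcal{B}_i$, so $|\mathcal{B}|\le CSr^{-a_0}$, and define on the unit sphere $\Sigma$ of $\Poly_D(\RR^n)$ the continuous odd map
\[
\Phi:\Sigma\to\RR^{|\mathcal{B}|},\qquad \Phi(P)_{B'}=\Vol(\{P>0\}\cap B')-\Vol(\{P<0\}\cap B').
\]
Setting $a=a_0$ and $c=1/(2C)$, the hypothesis $S<cr^a\Dim\Poly_D(\RR^n)$ forces $|\mathcal{B}|<\dim\Sigma$, so by the Borsuk--Ulam theorem there is a nonzero $P\in\Sigma$ with $\Phi(P)=0$. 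Since $\{P=0\}$ has Lebesgue measure zero, $P$ exactly bisects every $B'\in\mathcal{B}$. Given any ball $B$ appearing in the definition of ``cuts $Q_i$ at scale $r$,'' pick the sample $B'\in\mathcal{B}_i$ approximating it; exact bisection of $B'$ together with a symmetric-difference estimate gives
\[
\bigl|\Vol(\{P>0\}\cap B)-\tfrac{1}{2}\Vol B\bigr|\le \Vol(B\triangle B')+\tfrac{1}{2}|\Vol B-\Vol B'|\le \tfrac{3}{2}\Vol(B\triangle B')\le r\Vol B,
\]
which is exactly the cutting condition. Any lost constant factor can be absorbed by running the discretization at scale $r/10$ and further shrinking $c$.

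The main obstacle is the counting in the discretization step: one must simultaneously make the grid of sample balls fine enough to approximate every continuous ball to within $(r/3)\Vol B$ in symmetric difference, and coarse enough to give a power-law bound $r^{-a}$ with $a$ depending only on the ambient dimension $n$. Once this bookkeeping is in place, the ham sandwich step is just the standard Borsuk--Ulam argument that proves the usual polynomial Stone--Tukey theorem, and the verification is a one-line triangle inequality.
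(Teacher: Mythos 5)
Your proof is correct and follows essentially the same strategy as the paper: reduce the continuum of near-bisection conditions to finitely many (at most $C(n) r^{-a(n)}S$) exact bisection conditions, apply the polynomial ham sandwich theorem under the hypothesis $S < c r^a \Dim \Poly_D(\RR^n)$, and transfer exact bisection back to the $r$-near-bisection of each ball with error $\le r \Vol B$. The only difference is cosmetic: the paper bisects a lattice of cubes of side $\sim r^2$ covering the $(10/r)$-neighborhood of the $Q_i$ and writes each ball as whole lattice cubes plus a thin boundary sliver, whereas you bisect an $\epsilon$-net of sample balls and transfer by a symmetric-difference estimate; both discretizations give the required power-law count.
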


This lemma follows from the polynomial ham sandwich theorem.

\begin{theorem} (Polynomial ham sandwich theorem, Stone and Tukey, \cite{ST}, see also \cite{Gu1}) Suppose that $U_1, ..., U_S$ are finite volume open sets in $\RR^n$ and that $S < \Dim \Poly_D(\RR^n)$.  Then there is a non-zero polynomial of degree $\le D$ that bisects each $U_i$.
\end{theorem}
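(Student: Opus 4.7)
The plan is to reduce the statement to the Borsuk--Ulam theorem by parametrizing polynomials of degree $\le D$ by a sphere. Let $N = \Dim \Poly_D(\RR^n)$ and fix a basis of monomials $m_1, \ldots, m_N$ for $\Poly_D(\RR^n)$. Every polynomial of degree $\le D$ is of the form $P_c = \sum_{j=1}^N c_j m_j$ for a unique coefficient vector $c \in \RR^N$, and $P_c$ is nonzero precisely when $c \ne 0$. Normalizing so that $c \in S^{N-1}$, observe that $P_{-c} = -P_c$, so the sign sets $\{P_c > 0\}$ and $\{P_c < 0\}$ interchange under $c \mapsto -c$.

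Now define $g \colon S^{N-1} \to \RR^S$ by
\begin{equation*}
g(c)_i = \Vol(U_i \cap \{P_c > 0\}) - \Vol(U_i \cap \{P_c < 0\}), \qquad i = 1, \ldots, S.
\end{equation*}
Then $g(-c) = -g(c)$, so $g$ is odd. The hypothesis $S < N$ gives $S \le N - 1$, so after padding with zero coordinates we may view $g$ as an odd continuous map $S^{N-1} \to \RR^{N-1}$, and the Borsuk--Ulam theorem produces some $c_* \in S^{N-1}$ with $g(c_*) = 0$. The polynomial $P_* = P_{c_*}$ is nonzero, has degree $\le D$, and satisfies $\Vol(U_i \cap \{P_* > 0\}) = \Vol(U_i \cap \{P_* < 0\})$ for every $i$. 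Because $P_*$ is a nonzero polynomial its zero set has Lebesgue measure zero, so these common values are each $\Vol(U_i)/2$ and $P_*$ bisects every $U_i$.

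The main obstacle is verifying that $g$ is continuous on $S^{N-1}$: as $c$ varies the sign sets $\{P_c > 0\}$ and $\{P_c < 0\}$ could in principle jump discontinuously. However, discontinuity can only occur at a $c$ where $P_c$ vanishes on a set of positive Lebesgue measure, and this never happens on $S^{N-1}$ since every such $c$ is nonzero and a nonzero polynomial vanishes on a set of measure zero. Concretely, if $c_k \to c$ in $S^{N-1}$ then $P_{c_k} \to P_c$ pointwise, and since $\{P_c = 0\}$ is null we have $\chi_{\{P_{c_k} > 0\}} \to \chi_{\{P_c > 0\}}$ almost everywhere. The dominated convergence theorem, using the integrable majorant $\chi_{U_i}$ (integrable because $U_i$ has finite volume), then yields continuity of $g$ and closes the argument.
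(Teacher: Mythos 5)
Your proof is correct and is the standard Borsuk--Ulam argument for the Stone--Tukey theorem; the paper itself does not prove this statement but cites \cite{ST} and \cite{Gu1}, and the proof given there is essentially the one you wrote (odd map on the coefficient sphere, continuity via the null zero set of a nonzero polynomial, Borsuk--Ulam in dimension $N-1$). All the steps, including the dominated convergence justification of continuity, are sound.
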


Now we can give the proof of Lemma \ref{parcountcut}.

\begin{proof} Consider a lattice of cubes of side length $(10n)^{-2n} r^2$.  Notice that the diameter of such a cube is $\le (10 n)^{-n} r^2$.  Let $\{ U_i \}$ be the set of cubes in the lattice which intersect the $(10/r)$-neighborhood of the union of the cubes $Q_i$.  The number of such $U_i$ is $\le C(n) r^{-a(n)} S$, where $S$ is the number of cubes $Q_i$.  By hypothesis, $C(n) r^{-a(n)} S < \Dim \Poly_D(\RR^n)$, so we can choose a polynomial of degree $\le D$ that bisects each cube $U_i$.

Now we consider a ball $B$ with radius in the range $(r,1)$ and center within distance $1/r$ of one of the cubes $Q_i$.  We can write $B$ as a union of some of the small cubes $U_i$ plus a small leftover piece.  The leftover piece is contained in the $(10 n)^{-n} r^2$ neighborhood of the boundary of $B$.  Now an elementary computation shows that the volume of the leftover piece is $\le (1/10) r \Vol B$.  
The polynomial $P$ exactly bisects each small cube $U_i$, and so it obeys the desired inequality for $B$.  \end{proof}

\begin{cor} \label{parcountcut'} For each $n$, there are constants $C_n, a_n$ so that the following holds.  Let $Q_1, ..., Q_S$ be unit cubes in $\RR^n$, and let $r \in (0, 1/2)$ be given.  Then there is a polynomial $P$ that cuts each cube $Q_i$ at scale $r$ with $\Deg P \le C_n r^{-a_n} S^{1/n}$.
\end{cor}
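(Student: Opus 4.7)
The plan is to deduce Corollary \ref{parcountcut'} directly from Lemma \ref{parcountcut} by choosing the degree $D$ as small as possible while still satisfying the hypothesis of that lemma. This is purely a matter of inverting the inequality $S < c\, r^a \Dim \Poly_D(\RR^n)$ for $D$.

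First I would recall the standard lower bound on the dimension of the polynomial space, namely $\Dim \Poly_D(\RR^n) = \binom{D+n}{n} \ge D^n/n^n$, which is mentioned explicitly in the remark right after Lemma \ref{parcount}. Substituting this bound, a sufficient condition for the hypothesis of Lemma \ref{parcountcut} is
\[
S \;<\; c\, r^a \cdot \frac{D^n}{n^n},
\]
which rearranges to
\[
D \;>\; n\, c^{-1/n}\, r^{-a/n}\, S^{1/n}.
\]

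Therefore I would set $D = \bigl\lceil 2 n\, c^{-1/n}\, r^{-a/n}\, S^{1/n} \bigr\rceil$ (the factor of $2$ is only to absorb the ceiling safely when $S=1$ and $r$ is close to $1/2$). With this choice the hypothesis of Lemma \ref{parcountcut} is satisfied, so there exists a non-zero polynomial $P$ of degree $\le D$ that cuts each cube $Q_i$ at scale $r$. Taking $C_n := 4 n\, c^{-1/n}$ (say) and $a_n := a/n$, where $c$ and $a$ are the constants from Lemma \ref{parcountcut}, gives the stated bound $\Deg P \le C_n r^{-a_n} S^{1/n}$.

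There is essentially no obstacle here; the only slightly delicate point is making sure the ceiling operation does not spoil the bound when $n\, c^{-1/n}\, r^{-a/n}\, S^{1/n}$ is small, which is why I would pad the constant by a factor of $2$. The real content has already been established in Lemma \ref{parcountcut} via the polynomial ham sandwich theorem; Corollary \ref{parcountcut'} is just its quantitative repackaging in terms of $S^{1/n}$.
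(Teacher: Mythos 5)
Your proposal is correct and matches the paper's own argument: invert the hypothesis of Lemma \ref{parcountcut} using the bound $\Dim \Poly_D(\RR^n) = \binom{D+n}{n} \ge D^n/n!$ (you use $D^n/n^n$, an immaterial difference) and pick the smallest admissible integer $D$, which is exactly what the paper does. The ceiling/constant-padding remark is a fine touch but not an issue either way, since the corollary allows arbitrary constants $C_n, a_n$.
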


\begin{proof} By Lemma \ref{parcountcut}, we can find a polynomial $P$ of degree $\le D$ cutting all the cubes at scale $r$ as long as 

$$S < c_n r^{a_n} \Dim \Poly_D(\RR^n).$$  

Now $\Dim \Poly_D(\RR^n) = { D + n \choose n} \ge D^n / n!$, so it suffices to check

$$ S < c_n (n!)^{-1} r^{a_n} D^n. $$

We can find an integer $D$ obeying this inequality with $D \le C_n r^{-a_n} S^{1/n}$.  \end{proof}

Now we turn to the analogue of the vanishing lemma. Suppose that $T$ is a tube of radius 1 and that $\{ Q_i \}$ are some unit cubes that intersect $T$.  Also suppose that the distance between any two $Q_i$ is $\ge 2n$.  Because of the separation, the cubes $Q_i$ have a definite order along $T$.  They divide $T$ into segments between the $Q_i$.  We can make this precise in the following way.
After rotation and translation, we can arrange that $T$ is described in coordinates $x_1, ..., x_n$ by the inequalities $\sum_{j=1}^{n-1} x_j^2 \le 1$ and $x_n \in [h_s,h_f]$.  We let $h_1, ..., h_S$ be the $x_n$ coordinates of the centers of the cubes $Q_i$, and we renumber the $Q_i$ so that $h_1 < h_2 < ... < h_S$.  Since the distances between the cubes are $\ge 2n$ and they all intersect $T$, it's straightforward to check that the gaps $h_i - h_{i-1}$ are all $\ge 1$.  Now we divide $T$ into tube segments $T_i$ defined by $\sum_{j=1}^{n-1} x_j^2 \le 1$ and $x_n \in [h_i, h_{i+1}]$.  

Our vanishing lemma roughly says that if $P$ is a polynomial of degree $\le D$, and if $P$ cuts far more than $D$ unit cubes $Q_i$ that intersect $T$, then $P$ cuts the unit cubes touching most of the tube segments between them.

\begin{lemma} \label{vanishtube} (Vanishing lemma for tubes) For each dimension $n$, there is a small $r(n) > 0 $ and a large constant $C(n)$ so that the following holds.  Suppose that $P$ is a non-zero polynomial in $\Poly_D(\RR^n)$.  Suppose that $Q_1, ..., Q_S$ are unit cubes that intersect $T$ with pairwise distance $\ge 2n$, and suppose that $P$ cuts each $Q_i$ at scale $r \le r(n)$.  Let $T_i$ be the tube segments of $T$ defined above.  There are $\le C(n) r^{-4n} D$ bad tube segments, and the rest of the $T_i$ are good tube segments.  If $Q$ is a unit cube that intersects a good tube segment, then $P$ cuts $Q$ at scale $2 r$.
\end{lemma}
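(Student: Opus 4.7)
The idea is to foliate $T$ by lines parallel to its axis and exploit that $P$ restricted to any such line is a polynomial of degree $\le D$, hence has at most $D$ real zeros. After rotation write $T = D_\perp \times [h_s, h_f]$ with $D_\perp \subset \RR^{n-1}$ the closed unit disk, and for each $y \in D_\perp$ let $k_i(y) = \#\{\, x_n \in [h_i, h_{i+1}] : P(y, x_n) = 0 \,\}$. Define the sign-weight of segment $T_i$ by $w(T_i) = \int_{D_\perp} k_i(y)\, dy$, so that $\sum_i w(T_i) \le D\, \Vol(D_\perp) \le C(n) D$. I would declare $T_i$ \emph{bad} when $w(T_i) \ge \tau$ for a threshold $\tau$ of order $r^{4n}$; by Markov's inequality this gives at most $C(n)\, r^{-4n}\, D$ bad segments, matching the conclusion. (The algebraic subset of $y$'s on which $P(y,\cdot)$ vanishes identically has positive codimension in $D_\perp$ and is discarded.)

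The heart of the proof is then to show that good segments satisfy the cutting property. Write $\chi = \mathbf{1}_{\{P > 0\}} - \tfrac12$, so cutting at scale $r$ for $Q_i$ reads $|\int_{B^*}\chi| \le r \Vol B^*$ on admissible balls $B^*$. Given a unit cube $Q$ meeting a good $T_i$ and an admissible ball $B$ for scale-$2r$ cutting at $Q$ (radius $\rho \in [2r, 1]$, center within $1/(2r)$ of $Q$), I would translate $B$ purely in the $x_n$-direction to a ball $B^*$ whose $x_n$-center is at $h_i$; then $B^*$ lies within $O(1)$ of $Q_i$, hence well within $1/r$, and scale-$r$ cutting at $Q_i$ applies to $B^*$. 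On each fiber $\ell_y$, the chords $B \cap \ell_y$ and $B^* \cap \ell_y$ have the same length $L(y) \le 2\rho$ and are $x_n$-translates of one another; if $P(y, \cdot)$ has no zero in the swept $x_n$-interval then $\chi$ is constant on it and the two chord integrals coincide, while in general the fibrewise discrepancy is bounded by $L(y) \le 2\rho$. Integrating,
$$\left| \int_B \chi - \int_{B^*} \chi \right| \;\le\; 2\rho \cdot \mu\bigl( \{\, y \in D_\perp : P(y, \cdot) \text{ has a zero in the swept interval}\,\} \bigr) \;\le\; 2\rho\, w(T_i) \;\le\; 2\rho\, \tau.$$
Combining with $|\int_{B^*}\chi| \le r \Vol B^*$ and using $\Vol B \gtrsim \rho^n$, a suitably small polynomial choice of $\tau$ in $r$ (the threshold $r^{4n}$ suffices) yields $|\int_B \chi| \le 2r \Vol B$, as required.

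The step I expect to be the main obstacle is the quantitative balancing of the two error terms, uniformly across the full range $\rho \in [2r, 1]$: the ratio $\rho / \Vol B \lesssim \rho^{1-n}$ can be as large as $r^{1-n}$ at the low end, which is what forces $\tau$ to be a fairly negative power of $r$, and pinning down the exponent $4n$ requires careful bookkeeping. A subtlety worth flagging, but ultimately not an obstruction, is that nothing in the sweep argument degrades for long segments: the fibrewise discrepancy is controlled by the chord length $L(y) \le 2\rho \le 2$ rather than by the (possibly very large) translation distance, and the swept interval always lies in a small neighborhood of $T_i$, so that $w(T_i)$ genuinely governs the total error regardless of segment length.
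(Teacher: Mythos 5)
Your overall strategy is the same as the paper's: foliate by lines parallel to the axis, use that $P$ restricted to each fiber has at most $D$ zeros, run a Markov/pigeonhole count to get $\lesssim r^{-4n}D$ bad segments, and then compare a ball $B$ with its vertical translate $B^*$ near $Q_i$ fiber by fiber. But there is a genuine quantitative gap in how you define the bad/good dichotomy, and it breaks the key displayed inequality. Your weight $w(T_i)=\int_{D_\perp}k_i(y)\,dy$ only counts zeros of $P(y,\cdot)$ with $y$ in the \emph{unit} disk $D_\perp$ and $x_n\in[h_i,h_{i+1}]$. However, the cutting condition at scale $2r$ forces you to handle balls $B$ of radius up to $1$ whose centers are up to $1/(2r)$ away from $Q$: the fibers of such a $B$ have $y$ ranging over a disk of radius $\sim 1/r$ around the axis, far outside $D_\perp$, and the swept $x_n$-interval (from the original center down to height $h_i$) can exit $[h_i,h_{i+1}]$ by up to $\sim 1/r$ as well. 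So the bound $\mu\{y: P(y,\cdot)\text{ has a zero in the swept interval}\}\le w(T_i)$ is simply false in general: a segment can have $w(T_i)<\tau$ while $Z(P)$ has large projected measure in the $1/r$-neighborhood of $T_i$ just outside the unit tube or just beyond $[h_i,h_{i+1}]$, and then $\chi$ fails to be constant on many fibers of $B$, so the comparison $|\int_B\chi-\int_{B^*}\chi|\le 2\rho\,\tau$ does not follow. (A smaller slip in the same step: after the vertical translation, $B^*$ is within $\sim 1/(2r)+O(1)$ of $Q_i$, not $O(1)$; the usable conclusion, distance $\le 1/r$, is still fine for $r\le r(n)$.)

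The paper's proof repairs exactly this point: it defines good/bad segments via $\Vol_{n-1}\,\pi\bigl(Z(P)\cap N_R T_i\bigr)$ with $R=n+1/r$, so that every ball and every swept region in the argument lies inside $N_R T_i$ and the "empty fiber'' argument applies. The price is that the total-weight bound picks up the factor $(R+1)^n\sim r^{-n}$ (the projection of $N_R T_i$ is a ball of radius $R+1$) and an overlap factor $\sim R$ from the non-disjointness of the neighborhoods $N_R T_i$ (consecutive heights differ by at least $1$, so each point lies in at most $2R+1$ of them); this is precisely where the exponent budget $r^{-4n}$ is spent, with the goodness threshold taken of size $r^{2n}$. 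Your version can be salvaged by redefining $w(T_i)$ as the projected measure (or fiberwise zero count) of $Z(P)\cap N_R T_i$ with $R\sim 1/r$ and rerunning your Markov count with these extra factors of $r^{-n}$; as written, though, the total bound $\sum_i w(T_i)\le C(n)D$ controls the wrong quantity and the good-segment conclusion does not go through.
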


The key difference between the vanishing lemma for tubes and for lines is that in the case of lines there were no bad segments.  If $P$ vanishes at $> D$ points $x_i$ along a line $l$, then it must vanish on the whole line, including points far away from the $x_i$.  Our lemma for tubes does not say anything about what happens along the tube far beyond all the cubes $Q_i$.  It only describes what happens between the cubes and there can be $\sim D$ bad tube segments where we are unable to say anything.  On the other hand, if $P$ cuts many times $D$ evenly spaced cubes $Q_i$ along $T$, then $P$ must cut most of the cubes between them.

\begin{proof} As above, we choose coordinates so that $T$ is defined by $\sum_{j=1}^{n-1} x_j^2 \le 1$.  We let $\pi$ denote the projection onto the first $(n-1)$ coordinates: $\pi(x_1, ..., x_n) = (x_1, ..., x_{n-1})$.  For any $y \in \RR^{n-1}$, we call the line $\pi^{-1}(y)$ a vertical line.  If a vertical line is not contained in $Z(P)$, then it intersects $Z(P)$ in $\le D$ points.  Also, the set of $y$ so that $\pi^{-1}(y) \subset Z(P)$ has measure zero.  So for almost every $y$, $\pi^{-1}(y)$ intersects $Z(P)$ in $\le D$ of the tube segments $T_i$.  Also, $\pi T_i \subset \pi T$ which is a unit ball.  Therefore, we get the following estimate:

$$ \sum_i \Vol_{n-1} \pi (Z(P) \cap T_i) \le C(n) D. $$

We need a small variation of this inequality involving the $R$-neighborhood of $T_i$, written $N_R T_i$.  The $N_R T_i$ are not disjoint.  However, the consecutive heights differ by at least 1: $h_i - h_{i-1} \ge 1$.  So any point lies in $\le 2R + 1$ of the sets $N_R T_i$.  Also $\pi N_R T_i \subset \pi N_R T$, which is a ball of radius $R + 1$.  Therefore, we get the following estimate:

$$  \sum_i \Vol_{n-1} \pi (Z(P) \cap N_R T_i) \le C(n) D (R+1)^{n}. \eqno{(1)}$$

From now on, we take $R = n + (1/r)$, so that all the balls and cubes in our story lie in the $R$-neighborhood of $T$.  

We call $T_i$ good if $\Vol_{n-1} \pi (Z(P) \cap N_R T_i) \le (100n)^{-n} r^{2n}$.  Otherwise, we call $T_i$ bad.  We see from equation (1) that the number of bad $T_i$ is $\le C(n) r^{-4n} D$ as desired.

Now let $Q$ be a unit cube that intersects a good segment $T_i$.  We have to prove that $P$ cuts $Q$ at scale $2 r$.  Let $B$ be a ball with radius in the range $[2 r, 1]$, and with center a distance $\le (1/2) r^{-1}$ from $Q$.  We have to prove that $P$ nearly bisects $B$.

The tube segment $T_i$ runs in the range $x_n \in [h_i, h_{i+1}]$, where $h_i$ is the $x_n$-coordinate of $Q_i$.  We consider a translate of $B$ in the $x_n$ direction.  We let $B'$ be the 
translation with center at height $h_i$.  

If $r(n)$ is sufficiently small, then the ball $B'$ lies in the $r^{-1}$ neighborhood of $Q_i$.  Therefore, $P$ nearly bisects $B'$:

$$ |\Vol \{ x \in B' | P(x) > 0 \} - (1/2) \Vol B'|  \le r \Vol B'.  \eqno{(2)}$$

The main idea is that $P$ cuts $B$ and $B'$ in a similar way, because $\Vol_{n-1} \pi (Z(P) \cap N_R T_i)$ is very small.  Note that $B$ and $B'$ are both in $N_R T_i$.  

We call a line $l$ vertical if it is parallel to the $x_n$ axis.  We say a vertical line is empty if $l \cap Z(P) \cap N_R T_i$ is empty.  If $l$ is an empty line, then the sign of $P$ does not change on $l \cap N_R T_i$.  Also, if $l$ is any vertical line, then the length of $l \cap B$ is the same as the length of $l \cap B'$.  We let $E$ be the union of all the empty lines.  By the above discussion, we see the following:

$$ \Vol \{ x \in B \cap E | P(x) > 0 \} = \Vol \{ x \in B' \cap E | P(x) > 0 \}. $$

On the other hand, $B \cap E^c$ and $B' \cap E^c$ are extremely small.  Since the radius of $B$ is $\le 1$, $\Vol_n (B \cap E^c) \le  2 \Vol_{n-1} \pi (E^c \cap B) \le 2 \Vol_{n-1} \pi (Z(P) \cap N_R T_i) \le  2 (100n)^{-n} r^{2n}. $  In particular, 
$\Vol_n E^c \cap B \le (1/100) r \Vol B$.  By the same argument, $\Vol_n E^c \cap B' \le (1/100) r \Vol B = (1/100) r \Vol B'$.  Therefore,

$$ |\Vol \{ x \in B | P(x) > 0 \} - \Vol \{ x \in B' | P(x) > 0 \}| \le (2/100) r \Vol B. \eqno{(3)}$$

Combining inequalities $(2)$ and $(3)$, we see

$$ |\Vol \{ x \in B | P(x) > 0 \} - (1/2) \Vol B|  \le (1.02) r \Vol B.  $$

This proves the desired near-bisection inequality for the ball $B$.
\end{proof}

\section{Degree reduction for tubes}

In this section, we use parameter counting and the vanishing lemma to prove a version of degree reduction for tubes.  To orient ourselves, we first present a parallel version of degree reduction for lines, and recall the proof.

\subsection{Degree reduction for lines}

The following Proposition is a degree reduction result for lines in $\RR^3$.  The statement and the proof are models for the result we will prove for tubes.  The proof here works over any field, so we present it in that generality.

\begin{prop} Let $\FF$ be a field.  Let $\epsilon > 0$ and $E > 0$ be any numbers.  
Suppose that $\frak L$ is a set of lines in $\FF^3$ and $X$ is a set of points in $\FF^3$ obeying the following conditions.

\begin{enumerate}

\item Each line $l \in \frak L$ contains between $N$ and $E N$ points of $X$, for some number $N$.

\item Each point of $X$ lies in between $\rho$ and $E \rho$ lines of $\frak L$, for some $\rho \ge 2$.

\end{enumerate}

Then there is a non-zero polynomial $P$ of degree $\le \Poly(E, \epsilon^{-1}) |X| N^{-2}$ that vanishes on $\ge (1 - \epsilon) |X|$ points of $X$.  

\end{prop}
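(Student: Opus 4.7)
I would prove this by iterative degree reduction, adapting the scheme of \cite{GK}. The two ingredients are parameter counting (Lemma~\ref{parcount}), which produces polynomials of degree $\sim S^{1/3}$ vanishing on $S$ prescribed points, and the classical vanishing lemma for lines (stated at the top of Section~1), which propagates vanishing from a point set to entire lines of $\frak L$. The basic mechanism is that a polynomial $P$ of degree $D<N$ vanishing on a subset $X'\subseteq X$ automatically vanishes on every line of $\frak L$ whose intersection with $X'$ exceeds $D$ points, and hence on every point of $X$ lying on such a line.

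\textbf{Base case and one-shot step.} If $|X|\ge N^{3}$ then $|X|/N^{2}\ge|X|^{1/3}$, and Lemma~\ref{parcount} directly gives a polynomial of degree $\le 3|X|^{1/3}\le 3|X|/N^{2}$ vanishing on all of $X$. Otherwise $|X|<N^{3}$, and a one-shot random sub-sampling already improves over the naive $|X|^{1/3}$ bound: pick $Y\subseteq X$ by including each point independently with probability $p\sim|X|/N^{3}$, so that $|Y|\sim p|X|$ and, by a Chernoff estimate, every line of $\frak L$ meets $Y$ in at least $pN/2$ points with high probability. Lemma~\ref{parcount} produces a polynomial of degree $\lesssim(p|X|)^{1/3}\lesssim|X|^{2/3}/N$ vanishing on $Y$; since this degree is below $pN/2$, the vanishing lemma for lines forces it to vanish on every line of $\frak L$ and hence on all of $X$.

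\textbf{Iteration and main obstacle.} The target $|X|/N^{2}$ is smaller than the one-shot bound $|X|^{2/3}/N$ by a factor $|X|^{1/3}/N<1$ in the regime $|X|<N^{3}$, so we must iterate. At stage $t$ I would maintain a nested subset $X^{(t)}\subseteq X$ with $|X\setminus X^{(t)}|\le\epsilon_{t}|X|$ for a geometric sequence $\{\epsilon_{t}\}$ summing to $\epsilon$, together with a polynomial $P_{t}$ of degree $D_{t}\lesssim|X^{(t)}|^{1/3}$ vanishing on $X^{(t)}$. Applying the vanishing lemma for lines, $P_{t}$ vanishes on every line of $\frak L$ meeting $X^{(t)}$ in more than $D_{t}$ points; double counting against hypotheses (1)--(2) then bounds the number of remaining ``bad'' lines, and hence the number of ``bad'' points of $X$, by $O_{E}(|X\setminus X^{(t)}|)$. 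One then shrinks $X^{(t+1)}\subseteq X^{(t)}$ by a constant factor $K=\Poly(E,\epsilon^{-1})$ per step, forcing $D_{t}$ down geometrically, so that after $T=\Theta(\log(N/|X|^{1/3}))$ iterations one reaches $D_{T}\le\Poly(E,\epsilon^{-1})|X|/N^{2}$. The principal obstacle is the design of the shrinkage at each step: it must be aggressive enough that $|X^{(t)}|^{1/3}$ decreases by a constant factor, yet preserve enough of the incidence structure that each remaining line still meets $X^{(t+1)}$ in many more than $D_{t+1}$ points. Balancing these competing demands over $\log N$-many iterations, while keeping the cumulative discarded mass $\le\epsilon|X|$, is the technical heart of the \cite{GK} degree-reduction argument, and I would adapt that sub-sampling procedure (via random choice together with a Markov bound, or a dyadic pigeonhole on line-incidence counts) directly here.
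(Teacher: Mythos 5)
Your proposal has a genuine gap, in two places. First, the numerics of your one-shot step go the wrong way in the only nontrivial regime. With sampling probability $p\sim |X|N^{-3}$ you get $|Y|\sim p|X| = |X|^2N^{-3}$, so parameter counting gives degree $\sim |Y|^{1/3}\sim |X|^{2/3}N^{-1}$, while each line carries only $\sim pN\sim |X|N^{-2}$ points of $Y$. When $|X|<N^3$ (otherwise the statement is immediate from $|X|^{1/3}\le |X|N^{-2}$) one has $|X|^{2/3}N^{-1} > |X|N^{-2}$, so the degree \emph{exceeds} the number of sampled points per line and the vanishing lemma cannot be invoked; the claim that ``this degree is below $pN/2$'' is exactly backwards. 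Optimizing $p$ in a single round of point-sampling (one needs $p\gtrsim |X|^{1/2}N^{-3/2}$) only reaches degree roughly $|X|^{1/2}N^{-1/2}$, still far from $|X|N^{-2}$. Second, the iteration is not coherently specified: you cannot both shrink $X^{(t)}$ by a constant factor at each of $\Theta(\log N)$ stages and keep $|X\setminus X^{(t)}|\le \epsilon_t|X|$ with $\sum_t\epsilon_t\le\epsilon$; and if instead $X^{(t)}$ retains a $(1-\epsilon)$-fraction of $X$, then parameter counting on $X^{(t)}$ always returns degree $\sim|X|^{1/3}$, and knowing that $P_t$ vanishes on most lines provides no mechanism for constructing a lower-degree $P_{t+1}$. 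The step at which the degree actually drops is never carried out.

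The paper's proof is not an iteration at all; it is a one-shot argument that samples \emph{lines} rather than points. Set $D=K|X|N^{-2}$ with $K=\Poly(E,\epsilon^{-1})$, include each line of $\frak L$ in a random subset $\frak L_1$ independently with probability $\sim D^2|\frak L|^{-1}$, and place $2D$ points on each of the $\lesssim D^2$ sampled lines: the total is $\lesssim D^3$ points, within the parameter-counting budget for degree $D$, and the classical vanishing lemma then forces $P$ to vanish identically on every line of $\frak L_1$. This is the device your point-sampling misses: it concentrates more than $\Deg P$ vanishing points on each sampled line already at the target degree $D$, in a single step. Then for a typical $l'\in\frak L$, the events that a given point of $X\cap l'$ lies on some line of $\frak L_1$ other than $l'$ are independent (two distinct points of $l'$ lie on no common line besides $l'$), each with probability $\gtrsim \Poly(E)^{-1}ND^2|X|^{-1}$; since $l'$ contains $\ge N$ points of $X$, the expected count is $\gtrsim \Poly(E)^{-1}KD \gg D$, so with high probability $P$ vanishes at more than $D$ points of $l'$ and hence on all of $l'$. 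Averaging over the randomness and using hypothesis (2) converts ``most lines'' into vanishing at $(1-\epsilon)|X|$ points of $X$. If you want to salvage your scheme, the fix is precisely to replace random points by random lines carrying $\gtrsim D$ chosen points each; as written, the proposal does not prove the proposition.
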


By the parameter counting lemma, Lemma \ref{parcount}, there is a non-zero polynomial vanishing on $X$ with degree $\le C |X|^{1/3}$.  If $|X|$ is much less than $N^3$, then $|X| N^{-2} < |X|^{1/3}$, and we get a significantly lower degree.  Therefore, we call this type of estimate a degree reduction result.

The estimate is particularly sharp over finite fields.  Suppose that $\FF = \FF_q$ is the finite field with $q$ elements, and suppose that $N = q$.  The Proposition tells us that there is a polynomial $P$ vanishing on most of $X$ with $\Deg P \lesssim |X| q^{-2}$.  On the other hand, the Schwarz-Zippel lemma says that a polynomial $P$ vanishes on at most $(\Deg P) q^2$ points.  Therefore, $\Deg P \gtrsim |X| q^{-2}$, and so the degree estimate is sharp up to a constant factor.

\begin{proof} Here is an outline of the proof.  Later when we do degree reduction for tubes, we will follow the same outline.

Step 1. We pick a random subset of lines $\frak L_1 \subset \frak L$.  We pick a bunch of points on each line $l \in \frak L_1$.  Then we use parameter counting to find a polynomial $P$ of controlled degree that vanishes at all the points.

Step 2. By the vanishing lemma, $P$ vanishes on each line of $\frak L_1$.

Step 3. Since there are many intersections, we will prove that each line of $\frak L$ usually has many intersection points with lines of $\frak L_1$.  We know that $P$ vanishes at each of these intersection points.

Step 4. By the vanishing lemma, $P$ vanishes on most lines of $\frak L$.  Therefore, it vanishes at most points of $X$.

Now we begin the details.  We let $D$ denote the degree bound for $P$.  We take $D = K |X| N^{-2}$, where $K = C (E \epsilon^{-1} )^A$ for some large constants $C, A$.  

(We want $\Deg P \le D$ to be an integer, so we need to check that this $D \ge 1$.  It suffices to check that $|X| \ge N^2/2$.  Let $l$ be a line of $\frak L$.  We know $l$ contains $\ge N$ points of $X$.  Each of these points lies in another line of $\frak L$, so we know that $\frak L$ contains at least $N$ lines besides $l$: let's call them $l_1, l_2, ...$ Now $l_1$ contains $\ge N-1$ points of $X$ not in $l$.  And more generally, $l_i$ contains at least $N-i$ points of $X$ not in $l, l_1, ..., l_{i-1}$.  So the total number of points of $X$ is at least $N + (N-1) + ... + 1 \ge (1/2) N^2$. )

Step 1. We randomly pick a set $\frak L_1 \subset \frak L$ by including each line with probability $(1/100) D^2 |\frak L|^{-1}$.  With high probability, the number of lines in $\frak L$ is $\le (1/10) D^2$.  

We pick $2 D $ points on each line of $\frak L_1$.  The total number of points picked is $\le (1/5) D^3$.  By the parameter counting lemma, Lemma \ref{parcount}, there is a non-zero polynomial $P$ which vanishes on these points and has $\Deg P \le D$.

Step 2. On each line of $\frak L_1$, the polynomial $P$ vanishes at $2 D > \Deg P$ points.  By the vanishing lemma, $P$ vanishes on each line of $\frak L_1$.

Step 3. Next we want to prove that with high probability, $P$ vanishes at many points on most lines of $\frak L$.
Let $l'$ be a fixed line of $\frak L$.  We first estimate the expected number of points of $l'$ that lie in a line of $\frak L_1$.  

The lines of $\frak L_1$ contain $\sim N D^2$ points of $X$.  The probability that a point $x \in X$ lies in a line of $\frak L_1$ is constant on $X$ up to a factor $\Poly(E)$.  So the probability that $x$ lies in a line of $\frak L_1$ is $\ge \Poly(E)^{-1} N D^2 |X|^{-1}$.  The line
$l'$ contains $\ge N $ points of $X$.  Therefore, the expected number of points of $l'$ in the lines of $\frak L_1$ is
$\ge \Poly(E)^{-1} N^2 D^2 |X|^{-1} = \Poly(E)^{-1} K D$.  By choosing the exponent $A$ large enough in the definition of $K$, we can arrange that this expected number is $> 20 D$.

Now we would like to prove that with high probability, the line $l$ contains $> D$ intersection points with lines of $\frak L_1$.  Let $x_1, x_2, ..., x_N$ be points of $X \cap l'$.  Let $I(x_i)$ denote the event that $x_i$ lies in a line of $\frak L_1$ other than $l'$.  This definition is good because the events $I(x_i)$ are independent.  As we saw in the last paragraph, each event $I(x_i)$ occurs with probability $\ge \Poly(E)^{-1} N D^2 |X|^{-1}$.  If we choose $A$ large enough, the expected number of $I(x_i)$ that occur is $> 20 D$.  In fact we can do a little better and say that the expected number of $I(x_i)$ that occur is $> 20 D \epsilon^{-10} E^{10}$.  Since the $I(x_i)$ are independent, we have that $> 2 D$ of the events $I(x_i)$ occur with high probability.  So with probability $(1 - \epsilon^8 E^{-8})$, the line $l'$ contains $> 2 D$ intersection points with lines of $\frak L_1$.  

Now we can choose a particular $P$ so that at least $(1 - \epsilon^8 E^{-8}) |\frak L|$ lines of $\frak L$ contain at least $2D$ points where $P$ vanishes.  

Step 4. By the vanishing lemma, $P$ vanishes on at least $(1 - \epsilon^8 E^{-8}) |\frak L|$ lines of $\frak L$.  
Since each point of $X$ lies in approximately the same number of lines, it follows that $P$ vanishes on $(1 - \epsilon) |X|$ points of $X$.  
\end{proof}

\subsection{Degree reduction for tubes}

We now formulate a similar degree reduction result for tubes.  Instead of $P$ vanishing at a point, we discuss $P$ cutting a cube at a small scale $r$.  Also, using tubes, we need to pay attention to angles of intersection, and we add an extra transversality assumption.

\begin{theorem} \label{degredtube} Let $\epsilon > 0$ and let $E > 1$.  
Suppose that $\frak T$ is a set of tubes in $\RR^3$ with radius 1 and arbitrary length.  Suppose that $X$ is a set of disjoint unit cubes in $\RR^3$.  Suppose that $X$ and $\frak T$ obey the following conditions:

\begin{enumerate}

\item Each tube $T \in \frak T$ intersects between $N$ and $E N$ cubes of $X$, for some number $N$.

\item Each cube of $X$ intersects between $\rho$ and $E \rho$ tubes of $\frak T$, for some $\rho \ge 2$.

\item (transversality) For each cube $Q \in X$, and for each unit vector $v \in \RR^3$, a fraction $E^{-1}$ of the tubes of $\frak T$ that intersect $Q$ have angle $\ge E^{-1}$ with the vector $v$.

\end{enumerate}

Then there is a non-zero polynomial $P$ of degree $\le \Poly(E, \epsilon^{-1}) |X| N^{-2}$ that cuts $\ge (1 - \epsilon) |X|$ cubes of $|X|$ at scale $\epsilon$.  
\end{theorem}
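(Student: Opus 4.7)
The plan is to follow the four-step outline of the degree reduction proof for lines in Section 2.1, substituting Corollary \ref{parcountcut'} for point-based parameter counting and Lemma \ref{vanishtube} for the usual vanishing lemma on lines. Set the target degree $D = K |X| N^{-2}$ and the initial cutting scale $r = \epsilon/4$, where $K = \Poly(E, \epsilon^{-1})$ is chosen large enough to absorb all the factors of $r^{-a_n}$, $E$, and $\epsilon^{-1}$ that appear below. Since Lemma \ref{vanishtube} will be applied twice, and each application doubles the scale, the final cutting scale will be $4r = \epsilon$. The proof that $D \geq 1$ proceeds exactly as in the line case, by counting cubes across a single tube and its neighbors to produce $|X| \gtrsim N^2$.

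Steps 1 and 2 (building $P$ and cutting cubes along $\frak T_1$). Select a random subset $\frak T_1 \subset \frak T$ by including each tube independently with probability $p = c\, r^{a_n} D^2 |\frak T|^{-1}$, so $|\frak T_1| \lesssim r^{a_n} D^2$ with high probability. On each $T \in \frak T_1$, greedily choose $M = C r^{-4n} D$ cubes of $X \cap T$ at pairwise distance $\geq 2n$; this is possible because $|X \cap T| \geq N$, which is much larger than $M$ once $K$ is chosen large. The total number of chosen cubes is $\lesssim r^{a_n - 4n} D^3$, so by Corollary \ref{parcountcut'} (with the $r$-loss absorbed into $K$) there is a non-zero polynomial $P$ of degree $\leq D$ cutting each chosen cube at scale $r$. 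By Lemma \ref{vanishtube}, on each $T \in \frak T_1$ at most $C(n) r^{-4n} D \leq M/2$ tube segments are bad, so most cubes of $X \cap T$ lie on good tube segments and are cut by $P$ at scale $2r$.

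Steps 3 and 4 (propagating to $\frak T$). Fix $T \in \frak T$ and count the cubes $Q \in X \cap T$ lying on a good tube segment of some $T_1 \in \frak T_1 \setminus \{T\}$. The uniformity hypotheses (1)--(2) give $|X| \rho \sim |\frak T| N$ up to $\Poly(E)$ factors, so each $Q$ lies on another tube of $\frak T_1$ with probability $\gtrsim \Poly(E)^{-1} r^{a_n} D^2 N |X|^{-1}$; summed over the $\geq N$ cubes of $X \cap T$, the expected count is $\gtrsim \Poly(E)^{-1} r^{a_n} K D$, which by choosing $K$ large can be forced to exceed any prescribed polynomial multiple of $r^{-4n} D$. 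Transversality, hypothesis (3), enters here: because an $E^{-1}$ fraction of the tubes through each $Q$ make angle $\geq E^{-1}$ with $v(T)$, any single tube $T' \in \frak T$ distinct from $T$ intersects $T$ in only $\lesssim E^2$ cubes of $X$, so the indicator events for distinct cubes of $X \cap T$ are nearly independent and a Chernoff-type estimate yields concentration. Fix a realization of $\frak T_1$ and $P$ such that a $(1 - \epsilon)$ fraction of tubes $T \in \frak T$ admit (after a further greedy extraction) $\geq M$ cubes of $X \cap T$ at pairwise distance $\geq 2n$, each cut by $P$ at scale $2r$. Apply Lemma \ref{vanishtube} again to each such $T$: at most $M/2$ tube segments are bad, so most cubes of $X \cap T$ are cut by $P$ at scale $4r = \epsilon$. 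Finally, the uniform-multiplicity hypothesis (2) converts a $(1 - \epsilon)$ fraction of ``good'' tubes into a $(1 - \epsilon)$ fraction of cut cubes, after adjusting $\epsilon$ by a factor depending on $E$.

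The main obstacle is Step 3. The line-case proof uses that the events ``$x_i$ lies on another line of $\frak L_1$'' are independent because distinct points of $l$ lie on disjoint sets of other lines. For tubes, two cubes of $X \cap T$ can easily lie on a common tube $T'$ that is nearly parallel to $T$, which destroys independence and could concentrate all the ``marked'' cubes along $T$ in just a few very flat neighbors. The transversality condition (3) is exactly what rules this out, bounding the number of cubes of $X \cap T$ on any single other tube by $\lesssim E^2$ and reducing the concentration problem to a weighted Bernstein-type estimate. The remaining work -- balancing the $r^{-4n}$ loss from Lemma \ref{vanishtube} against the $\Poly(E, \epsilon^{-1})$ losses in parameter counting and in the incidence count -- is routine bookkeeping absorbed into the choice of $K$.
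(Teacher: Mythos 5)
Your outline follows the paper's strategy (random subfamily $\frak T_1$, parameter counting via Corollary \ref{parcountcut'}, two applications of Lemma \ref{vanishtube}), but it skips the two points where the tube case genuinely diverges from the line case, and your parameter choices make one of them unfixable as stated. First, in Step 3 you count cubes $Q \in X \cap T$ that lie on \emph{some} tube of $\frak T_1$, and then treat them as if they lay on a \emph{good} segment of that tube. Nothing in your argument rules out that the intersections systematically land in bad segments; for a fixed $T$ there is no independence to exploit here, since the bad segments are determined by $P$, which depends on the whole random draw (the paper flags exactly this and resorts to averaging over all $T \in \frak T$, Lemma \ref{badinters}). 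Worse, with your parameters the averaged version also fails: you put only $M \sim_\epsilon D$ cubes on each tube of $\frak T_1$, so up to half of its segments, hence a constant fraction of the cubes of $X$ along it, may be bad. Multiplying by $\rho$ and averaging, the expected number of bad-segment incidences along a typical $T$ is then of the same order as the total expected number of incidences, so the good events need not dominate. The paper's asymmetric choice --- about $K^{-1/2}D^2$ tubes in $\frak T_1$ but $K^{1/2}D \gg D$ evenly spaced cubes on each --- is precisely what makes the cubes lying in bad segments a $K^{-1/2}$-small fraction per tube of $\frak T_1$, which is what the averaging argument needs. Relatedly, your claim that transversality forces every other tube to meet $T$ in $\lesssim E^2$ cubes is false: hypothesis (3) does not exclude nearly parallel tubes sharing many cubes with $T$. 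The correct move (as in the paper) is to restrict to intersections at angle $\ge E^{-1}$ and to sample $E^{-1}N$ cubes spaced $\ge E$ apart along $T$, so that a transverse tube can hit at most one sampled cube and the events $I_{tr}$ are exactly independent.

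Second, in Step 4 the inference ``at most $M/2$ of the segments between the cut cubes are bad, hence most cubes of $X \cap T$ are cut'' is a non sequitur. Lemma \ref{vanishtube} bounds the \emph{number} of bad segments, not the number of cubes of $X$ they contain: if the cut cubes you extracted are clustered (your greedy extraction allows this), the few bad segments --- or the part of $T$ beyond the extracted cubes, about which the lemma says nothing --- can contain almost all of $X \cap T$. This is why the paper proves item (2) of Lemma \ref{goodinters}: with probability $1-K^-$, \emph{any} $D$ of the segments between the transverse-intersection cubes meet only $K^{-(1/2)+}N$ cubes of $X$, a statement obtained from the independence of the $I_{tr}$ events via the exponential tail estimate ($e^{-\beta}$ bound on long gaps). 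Your proposal contains no substitute for this even-distribution step, and without it the final application of the vanishing lemma does not yield that $P$ cuts a $(1-\epsilon)$ fraction of the cubes along a typical tube. Both gaps can be closed, but essentially by re-importing the paper's devices: the lopsided choice of $|\frak T_1|$ versus cubes-per-tube, the averaging over $T \in \frak T$ to control bad-segment incidences, and the spacing/independence argument controlling the gaps between good cubes.
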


The degree estimate in this theorem is sharp up to a constant factor when $X$ is contained in a ball of radius $\sim N$.  
If $X$ is contained in a ball of radius $\sim N$ and $P$ cuts most unit cubes of $X$, then the area of $Z(P) \cap B(N)$ is $\gtrsim |X|$.  On the other hand, the Crofton formula implies that for any polynomial $P$, the area of $Z(P) \cap B(N)$ is $\lesssim (\Deg P) N^2$.  (We will review the Crofton formula in Section \ref{revintgeom}.)  Comparing these inequalities, we see $\Deg P \lesssim |X| N^{-2}$. 
This situation is analogous to the finite field situation we discussed after the degree reduction proposition for lines.

\begin{proof}  We begin by making an outline of the proof, parallel to the case of lines.

Step 1. We pick a random subset of tubes $\frak T_1 \subset \frak T$.  We pick a bunch of cubes on each tube $T \in \frak T_1$.  Then we use parameter counting to find a polynomial $P$ of controlled degree that cuts all of the cubes.

Step 2. We apply the vanishing lemma for tubes to each tube $T \in \frak T_1$.   By Step 1, we know that $P$ cuts many cubes on $T$.  These cubes divide $T$ into a sequence of tube segments, and the vanishing lemma says that $P$ cuts the cubes in most of these segments.  We call the segments where $P$ cuts good segments.

Step 3. Let $T'$ be a typical tube of $\frak T$.  By assumption $T'$ has many intersections with other tubes of $\frak T$, and so $T'$ usually has many intersections with tubes of $\frak T_1$.  Being a little more careful, we will show that $T'$ usually intersects many tubes of $\frak T_1$ in good segments.  If $T'$ intersects a tube of $\frak T_1$ in a good segment, then we call the cube where they intersect a good cube.  By Step 2, we know that $P$ cuts every good cube.  

Step 4. By the vanishing lemma, $P$ cuts the cubes of $\frak T'$ in most of the segments between the good cubes from Step 3.  We next have to check that these good cubes are usually evenly distributed along $\frak T'$.  Then it follows that $P$ cuts most of the cubes in $\frak T'$.  Since this holds for most tubes $\frak T'$, $P$ cuts most cubes of $X$.

As before, we define $D = K |X| N^{-2}$, where $K = C (E \epsilon^{-1})^A$ for large constants $C, A$.   By the same argument as above, we can check that $|X| \ge \Poly(E)^{-1} N^2$, and so $D \ge 1$.  

We write $K^+$ for a small positive power of $K$, and $K^-$ for a small negative power of $K$.  These powers can change from line to line.  By choosing $A$ large, any $K^+$ is always at least $(E \epsilon^{-1})^{10}$.  On the other hand the power $K^+$ is always $\le K^{1/100}$, so that an expression like $K^{-(1/2)+}$ is much smaller than 1.

Step 1.  Now we choose a random subset of tubes $\frak T_1 \subset \frak T$ and some cubes on each tube.  When we worked with lines, we chose $2 D$ points on each line, which is enough to apply the vanishing lemma for lines.  But the vanishing lemma for tubes works better if we have far more than $D$ cubes in a tube.  
So we choose $ \gg D$ cubes on each tube, and we have to choose fewer tubes.  Playing around with the parameters it turns out to work if we take around $K^{1/2} D$ cubes on each tube, and around $K^{-1/2} D^2$ tubes.  All we really need about $K^{1/2}$ is that $K \gg K^{1/2} \gg 1$.  

Let $\frak T_1 \subset \frak T$ be a random subset of tubes, where each tube is selected with probability $K^{-1/2} D^2 |\frak T|^{-1}$.  With high probability, the size of $\frak T_1$ is $\lesssim K^{-(1/2)+} D^2$.  

For each tube $T \in \frak T_1$, we choose $K^{1/2} D$ cubes of $X$ which intersect $T$.  We choose them evenly spaced among the cubes of $X$ that meet $T$.  

Since we choose $K^{1/2} D$ cubes in each tube of $\frak T_1$, the total number of chosen cubes is $\le K^+ D^3$ with high probability.  By the parameter counting lemma for cutting cubes, Corollary \ref{parcountcut'}, we can find a non-zero $P$ with $\Deg P \le K^+  D$ which cuts every chosen cube at scale $K^-$.  

Step 2. Next we apply the vanishing lemma for tubes, Lemma \ref{vanishtube}, to each tube $T \in \frak T_1$.  
Fix a tube $T \in \frak T_1$, and let $Q_1, Q_2, ..., Q_{K^{1/2} D}$ be the chosen cubes that intersect $T$.  We label them in order.  Let $T_{i, i+1}$ be the segment of $T$ from $Q_i$ to $Q_{i+1}$, as defined before the statement of Lemma \ref{vanishtube}.  Note that there are $K^{1/2} D$ of these tube segments, and each of them intersects $\sim N K^{-1/2} D^{-1}$ cubes of $X$.
Lemma \ref{vanishtube} says that there are at most $K^+ D$ bad tube segments $T_{i, i+1}$, and that for any unit cube $Q$ intersecting any good tube segment, $P$ cuts $Q$ at scale $K^-$.  

Step 3. Let $Q$ be a fixed cube of $X$, and let $I(Q)$ be the event that $Q$ intersects a tube of $\frak T_1$.  We claim that the probability of $I(Q)$ is $\ge K^{(1/2)-} D N^{-1}$.  Each tube belongs to $\frak T_1$ with probability $K^{-1/2} D^2 |\frak T|^{-1}$.  There are $\ge \rho$ tubes of $\frak T$ that intersect $Q$, and so the probability of $I(Q)$ is at least $\rho K^{-1/2} D^2 |\frak T|^{-1}$.
We can simplify this expression using a double counting argument.  We count the incidences between tubes of $\frak T$ and cubes of $X$ in two different ways.  Up to powers of $E$, the number of incidences is $\rho |X|$ and it is also $N |\frak T|$.  Therefore, $\rho \ge K^- N |\frak T| |X|^{-1}$.  Plugging this in above, we see that the probability of $I(Q)$ is at least $K^{-(1/2)-} N D^2 |X|^{-1}$.  Plugging in $D = K |X| N^{-2}$, the probability of $I(Q)$ is at least $K^{(1/2)-} D N^{-1}$.  This proves the claim.

Now let $T'$ be an arbitrary tube of $\frak T$.  Let $Q_1, ..., Q_N$ be cubes of $X$ that intersect $T'$.  (These are different from the tubes in Step 2.)  The expected number of cubes $Q_j$ so that $I(Q_j)$ holds
is $\ge K^{(1/2)-} D$.  However, the events $I(Q_j)$ are not independent.  The problem is that a tube $T \in \frak T$ with a small angle to $T'$ may intersect many cubes $Q_j$, and if this tube $T$ is chosen for $\frak T_1$, it will cause $I(Q_j)$ to happen for many $j$.  

We can fix this independence problem by tweaking the definition, and considering only transverse intersections.  Among the cubes of $X$ that intersect $T'$, choose $E^{-1} N$ evenly spaced cubes $Q'_j$.  Between $Q_j'$ and $Q_{j+1}'$ there are $E$ cubes of $X$ that intersect $T'$, and so the distance from $Q'_j$ to $Q'_{j+1}$ is at least $E$.  We let $I_{tr}(Q_j')$ be the event that a cube of $\frak T_1$ intersects $Q_j'$ and the angle between that tube and $T'$ is at least $E^{-1}$.  If the angle between $T$ and $T'$ is at least $E^{-1}$, then $T$ can intersect at most one of the cubes $Q_j'$.  Therefore, the events $I_{tr}(Q_j')$ are independent.  

The transversality hypothesis in the statement of the Theorem says that for each cube $Q_j' \in X$, among the tubes of $\frak T$ that intersect $Q_j'$, at least a fraction $E^{-1}$ of them are $E^{-1}$-transverse to $T'$.  So by the same analysis as above, the probability of $I_{tr}(Q_j')$ is still $\ge K^{(1/2)-} D N^{-1}$.  The number of cubes $Q_j'$ is $E^- N$.  Therefore, the expected number of $Q_j'$ for which $I_{tr}(Q_j')$ occurs is $\ge K^{(1/2)-} D$.  
Moreover, since the events $I_{tr}(Q_j')$ are independent, we can say that with probability $(1 - K^-)$, there are $\ge K^{(1/2)-} D$ cubes $Q_j'$ where $I_{tr}(Q_j')$ holds.  

Suppose that $I_{tr}(Q_j')$ occurs.  It would be nice if we could conclude that $P$ cuts $Q_j'$ at scale $K^-$.  However we don't know this.  Since $I_{tr}(Q_j')$ occurs, we know that $Q_j'$ intersects a tube $T \in \frak T_1$, but $Q_j'$ may intersect a bad tube segment of $T$.  We would like to prove that this is rare.  I don't know how to prove this for a single $T'$, so we now have to average over all $T' \in \frak T$.

Let's make a little more notation.  For each tube $T' \in \frak T$, let $X_{spaced}(T') \subset X$ be a set of $E^{-1} N$ evenly spaced cubes among the cubes of $X$ that intersect $T'$.  The event $I_{tr}(Q_j')$ really depends on $T'$, and we make this explicit by calling it $I_{tr}(T', Q_j')$.  Let us formally state what we proved so far in our new notation.  

\begin{lemma} \label{transinters} For each $T' \in \frak T$, with probability $(1 - K^-)$, there are $\ge K^{(1/2)-} D$ cubes $Q_j'$ in $X_{spaced}(T')$ so that $I_{tr}(T', Q_j')$ holds.
\end{lemma}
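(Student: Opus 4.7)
The plan is to upgrade the informal argument from Step 3 into a proof. Since the expected-value and independence claims have already been sketched, the remaining work is to verify them rigorously and then invoke a concentration inequality.

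First I would record the single-event probability bound. By the transversality hypothesis (3), among the $\ge \rho$ tubes of $\frak T$ through $Q_j'$, at least a fraction $E^{-1}$ make angle $\ge E^{-1}$ with the direction of $T'$. Each such tube is independently included in $\frak T_1$ with probability $K^{-1/2} D^2 |\frak T|^{-1}$, so $\Prob[I_{tr}(T',Q_j')] \ge K^- \rho D^2 |\frak T|^{-1}$. Double counting incidences between $\frak T$ and $X$ gives $\rho|X|$ and $N|\frak T|$ agree up to factors of $E$, hence $\rho \ge K^- N |\frak T| |X|^{-1}$; substituting this and $D = K |X| N^{-2}$ yields $\Prob[I_{tr}(T',Q_j')] \ge K^{(1/2)-} D N^{-1}$. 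Since $|X_{spaced}(T')| \ge E^{-1} N$, the expected number of successful $Q_j'$ is at least $K^{(1/2)-} D$.

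Next I would establish genuine independence of the events $\{I_{tr}(T',Q_j')\}_j$. The delicate geometric point is that if a single tube $T$ were $E^{-1}$-transverse to $T'$ but passed through two different cubes of $X_{spaced}(T')$, its random inclusion in $\frak T_1$ would couple the corresponding events. I would rule this out as follows: consecutive cubes $Q_j', Q_{j+1}' \in X_{spaced}(T')$ lie on $T'$ at distance $\ge E$ (or $\ge c E$ for a fixed constant $c$, after absorbing a harmless safety factor into the definition of ``evenly spaced''), while a tube $T$ of radius $1$ making angle $\ge E^{-1}$ with $T'$ and meeting $Q_j'$ has its central line displaced from $T'$ by a distance $\gtrsim 1$ at the $x_n$-height of $Q_{j+1}'$, so $T$ cannot meet $Q_{j+1}'$ once the constants are chosen appropriately. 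Hence $I_{tr}(T',Q_j')$ depends only on the inclusion coin flips of the tubes that hit $Q_j'$ transversally to $T'$, and these subsets of flips are disjoint across $j$, giving the required independence.

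Finally I would apply Chernoff's inequality to the number of successful indices $j$. By choosing the exponent $A$ in $K = C (E \epsilon^{-1})^A$ large, the mean $K^{(1/2)-} D$ can be made an arbitrarily large multiple of $D$, so a standard lower-tail Chernoff bound for sums of independent $\{0,1\}$ variables gives that the probability of fewer than $K^{(1/2)-} D$ successes is at most $\exp(-c K^{(1/2)-} D) \le K^-$, which is exactly the conclusion of the lemma. The main obstacle is the independence step: making precise the geometric claim that an $E^{-1}$-transverse tube cannot reach two cubes spaced $\gtrsim E$ apart along $T'$; everything else is bookkeeping in the $K^\pm$ notation and invoking a standard concentration inequality.
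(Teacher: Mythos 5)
Your proposal is correct and follows essentially the same route as the paper: the single-event probability bound via the transversality hypothesis plus the incidence double count, independence of the events $I_{tr}(T',Q_j')$ because an $E^{-1}$-transverse tube can meet at most one of the $\ge E$-separated cubes of $X_{spaced}(T')$, and then a Chernoff-type concentration bound (which the paper invokes implicitly). Your extra care with the geometric constant in the independence step, absorbed into the spacing and the $K^{\pm}$ bookkeeping, is a harmless refinement of the paper's argument rather than a different approach.
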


If $Q_j' \in X_{spaced}(T')$, we let $I_{bad}(T', Q_j')$ be the event that $Q_j'$ lies in a bad segment of a tube $T \in \frak T_1$ and the angle between $T$ and $T'$ is $\ge E^{-1}$.  We will prove the following bound showing that $I_{bad}$ is rare.

\begin{lemma} \label{badinters} With probability $(1- K^-)$, 

$$ \Avg_{T' \in \frak T} \left| \{ Q_j' \in X_{spaced}(T') \textrm{ so that } I_{bad}(T', Q_j') \} \right| \le K^{+} D. $$
\end{lemma}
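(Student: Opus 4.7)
The plan is to bound the total
$$S := \sum_{T' \in \frak T} \left| \{ Q_j' \in X_{spaced}(T') : I_{bad}(T', Q_j') \} \right|$$
from above by a double-counting argument, then divide by $|\frak T|$. Every pair $(T', Q_j')$ contributing to $S$ is witnessed by at least one tube $T \in \frak T_1$ such that $Q_j'$ lies in a bad segment of $T$ and $\Angle(T, T') \ge E^{-1}$. Let $X_{bad}(T)$ denote the set of cubes of $X$ meeting the bad tube segments of $T$. Dropping the angle condition and the membership condition $Q_j' \in X_{spaced}(T')$ to get an upper bound, and using Hypothesis (2) to bound the number of tubes of $\frak T$ through any cube by $E \rho$, I obtain
$$S \;\le\; \sum_{T \in \frak T_1} \sum_{Q \in X_{bad}(T)} \#\{ T' \in \frak T : Q \subset T' \} \;\le\; \sum_{T \in \frak T_1} |X_{bad}(T)| \cdot E \rho.$$

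To estimate $|X_{bad}(T)|$, I would apply Lemma \ref{vanishtube} with cutting scale $r = K^-$: because $P$ cuts the $K^{1/2} D$ evenly spaced chosen cubes along $T$ at this scale, the number of bad segments is at most $C(n) r^{-4n} \Deg P \le K^+ D$. Each such segment lies between two consecutive chosen cubes among the $\le E N$ cubes of $X \cap T$, so it contains $\lesssim E N / (K^{1/2} D)$ cubes of $X$. Therefore $|X_{bad}(T)| \le K^{-(1/2)+} E N$. Step 1 already established via Chernoff that $|\frak T_1| \le K^{-(1/2)+} D^2$ with probability $\ge 1 - K^-$, and on this event
$$S \;\le\; K^{-(1/2)+} D^2 \cdot K^{-(1/2)+} E N \cdot E \rho \;=\; K^{-1+} D^2 E^2 N \rho.$$

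It remains to convert this to the average. Counting incidences in $\{(Q,T) \in X \times \frak T : Q \subset T\}$ both ways gives $N |\frak T| \ge \rho |X| / E$, so $1/|\frak T| \le E N / (\rho |X|)$. Substituting and using $D = K |X| / N^2$, which yields $D^2 N^2 / |X| = D \cdot K$,
$$\Avg_{T' \in \frak T}|\{Q_j' : I_{bad}(T',Q_j')\}| \;=\; S/|\frak T| \;\le\; K^{-1+} E^3 D^2 N^2 / |X| \;=\; K^+ D \cdot E^3.$$
Since $K^+ \ge (E \epsilon^{-1})^{10}$ by choice of the exponent $A$, the factor $E^3$ is absorbed, and we get $\Avg \le K^+ D$ as claimed.

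The argument is essentially a routine double-count, so no conceptual obstacle arises. The delicate point is purely bookkeeping: one must verify that the accumulating positive and negative powers of $K$ line up and that every polynomial-in-$E$ factor fits inside $K^+$. Notably, the transversality angle condition $\Angle(T,T') \ge E^{-1}$ in the definition of $I_{bad}$ is not actually used in this upper bound --- it plays its real role later, in Step 4, where the independence of the events $I_{tr}(T', Q_j')$ must be preserved.
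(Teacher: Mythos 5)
Your proposal is correct and follows essentially the same double-counting argument as the paper: bound the cubes lying in bad segments of tubes of $\frak T_1$ (at most $K^{-(1/2)+}N$ per tube, times $K^{-(1/2)+}D^2$ tubes), multiply by the $\le E\rho \le K^+ N |\frak T| |X|^{-1}$ tubes of $\frak T$ through each such cube, and convert via $D = K|X|N^{-2}$ to get $K^+ D |\frak T|$ total bad events before averaging. The bookkeeping with the incidence count and the absorption of $E$-factors into $K^+$ matches the paper's proof of Lemma \ref{badinters}.
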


\begin{proof} Each tube $T \in \frak T_1$ intersects $\le K^+ N$ cubes of $X$.  The tube $T$ is divided into $K^{1/2} D$ segments, each containing the same number of cubes, and there are only $K^+ D$ bad segments.  Therefore, the number of cubes of $T$ in bad segments is $\le K^{-(1/2)+} N$.  With probablity $(1 - K^-)$, there are at most $K^{-(1/2)+} D^2$ tubes in $\frak T_1$, and so the total number of cubes in the bad segments of all these tubes is
$\le K^{-1+} D^2 N$.  Each of these cubes lies in at most $K^+ \rho \le K^+ |\frak T| N |X|^{-1}$ tubes of $\frak T$.  Therefore, the total number of bad events $I_{bad}(T', Q_j')$ is at most $K^{-1+} D^2 N^2 |X|^{-1} |\frak T| = K^+ D |\frak T|$.  Averaging over $T' \in \frak T$, we get the inequality above. \end{proof}

We note that if $I_{tr}(T', Q_j')$ holds but $I_{bad}(T', Q_j')$ does not hold, then $Q$ must intersect a tube $T \in \frak T_1$ in a good tube segment, and so $P$ cuts $Q_j'$ at scale $K^-$.  Comparing Lemma \ref{transinters} and Lemma \ref{badinters}, we see that with probability $(1 - K^-)$, 
for at least $(1 - K^-) |\frak T|$ tubes $T' \in \frak T$, $P$ cuts at least $ K^{(1/2)-} D$ cubes $Q_j' \in 
X_{space}(T')$ at scale $K^-$.

In Step 4, we will apply the vanishing lemma for tubes to $T'$.  For a typical $T'$, we see that $P$ cuts at least $K^{(1/2)-} D$ cubes along $T'$.  The vanishing lemma implies that $P$ also cuts the cubes in most of the segments between these cubes.  But to get a good estimate, we will need to know that these $K^{(1/2)-} D$ cubes are fairly evenly distributed along $T'$.  

Let us make this precise.  Consider a tube $T'$.  As $Q_j'$ varies in $X_{spaced}(T')$, the events $I_{tr}(T', Q_j')$ are independent.  Therefore the cubes $Q_j'$ where $I_{tr}(T', Q_j')$ holds are usually distributed very evenly.  More precisely, with probability $(1 - K^-)$, any $D$ tube segments between the cubes $Q_j' \in X_{spaced}(T')$ where $I_{tr}(T', Q_j')$ holds will intersect $\le K^{-(1/2)+} N$ cubes of $X$.

This holds for the following reason.  Let the cubes $Q_j' \in X_{spaced}(T')$ where $I_{tr}(T', Q_j')$ holds be called transverse intersection cubes.  We want to understand the tube segments between the transverse intersection cubes.  We define the `length' of a tube segment to be the number of cubes of $X$ that it intersects.  Define 

$$\lambda:= K^{-(1/2)+} D^{-1} N.$$ 

\noindent $\lambda$ is the typical length of a tube segment.  Now let $\beta > 1$ be a parameter, and consider a sequence of $\beta \lambda$ consecutive cubes in $X_{spaced}(T')$.  We consider the probability that these cubes lie in a single tube segment  - this is the same as the probability that none of the cubes in the sequence is a transverse intersection cube.  Since $I_{tr}(T', Q_j')$ holds with probability at least $K^{(1/2)-} D N^{-1} = \lambda^{-1}$, the probability that our sequence lies in a single tube segment is $\le e^{- \beta}$.  Next, divide the cubes of $X_{spaced}(T')$ into disjoint sequences of $\beta \lambda$ consecutive cubes.  There are $\le K^+ N \beta^{-1} \lambda^{-1}$ of these sequences.  Any tube segment of length $\ge 2 E \beta \lambda$ must contain one of these sequences.  Therefore, the expected number of such tube segments is bounded as follows:  

$$ \mathbb{E} \left[ \textrm{The number of tube segments of length } \ge 2 E \beta \lambda \right] \le e^{-\beta} K^+ N \beta^{-1} \lambda^{-1}. $$

\noindent This is the key formula in the proof.  In particular, it follows that with probability 
$(1 - K^-)$, 

$$[\textrm{The total length of all tube segments of length } \ge 2 E (\log K) \lambda] \le K^{-1} K^+ N. $$

\noindent On the other hand, any $D$ tube segments with length $\le 2 E (\log K) \lambda$ have total length $\le D K^+ \lambda \le K^{-(1/2)+} N$.  Therefore, with probability $(1 - K^-)$, the total length of any $D$ tube segments is $\le K^{-(1/2)+} N$.  

Here is a final lemma summarizing how $T'$ interacts with the tubes of $\frak T_1$.

\begin{lemma} \label{goodinters} With probability $(1 - K^-)$, there are $(1 - K^-) |\frak T|$ tubes $T' \in \frak T$ where the following holds:
 
\begin{enumerate}

\item There are at least $ K^{(1/2)-} D$ cubes $Q_j' \in X_{spaced}(T')$ where $I_{tr}(T', Q_j')$ holds.

\item Any $D$ tube segments of $T'$ between the cubes where $I_{tr}(T', Q_j')$ holds will intersect $\le K^{-(1/2)+} N$ cubes of $X$.

\item There are at most $K^+ D$ cubes $Q_j' \in X_{spaced}(T')$ where $I_{bad}(T', Q_j')$ holds. 

\end{enumerate}
\end{lemma}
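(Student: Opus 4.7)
The proof proposal is to assemble Lemma \ref{goodinters} from the three independent ingredients that have already been established in the preceding discussion, with a short application of Fubini and Markov to convert per-tube high-probability statements into the required ``for $(1 - K^-)|\frak T|$ tubes $T'$'' statement.

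First, for condition (1): Lemma \ref{transinters} says that for each \emph{fixed} $T' \in \frak T$, condition (1) fails with probability $\le K^-$. Applying Fubini to the indicator of ``condition (1) fails for $T'$'', the expected number of tubes $T'$ for which (1) fails is at most $K^- |\frak T|$. By Markov's inequality, with probability $(1 - K^{-/2})$, the number of such tubes is $\le K^{-/2} |\frak T|$. The same argument handles condition (2): the paragraph preceding the lemma shows that for each fixed $T'$, the probability that some collection of $D$ tube segments between transverse intersection cubes has total length $> K^{-(1/2)+} N$ is at most $K^-$, so Fubini plus Markov again gives at most $K^{-/2} |\frak T|$ exceptional tubes with probability $(1 - K^{-/2})$.

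For condition (3), Lemma \ref{badinters} is already formulated as an average statement: with probability $(1 - K^-)$ we have
$$ \Avg_{T' \in \frak T} \bigl| \{ Q_j' \in X_{spaced}(T') : I_{bad}(T', Q_j') \} \bigr| \le K^{+} D. $$
By Markov's inequality applied to this average, the number of tubes $T'$ for which the count exceeds, say, $K^{++} D$ (a slightly larger power of $K$) is at most $K^{-} |\frak T|$. Redefining the $K^+$ in the statement of (3) to absorb this additional factor, we conclude that at most $K^-|\frak T|$ tubes violate condition (3) with probability $(1 - K^-)$.

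Finally, take a union bound over the three ``failure'' events just described. With total probability $(1 - K^-)$, the set of tubes $T' \in \frak T$ failing any one of conditions (1), (2), (3) has size at most $K^- |\frak T|$, which is exactly the content of Lemma \ref{goodinters}. There is no genuine obstacle here; the only subtlety worth flagging is bookkeeping with the floating exponents $K^\pm$, which is safe because each is by construction dominated by any larger power of $K$ and we can always enlarge the constant $A$ in the definition of $K$ to absorb a finite number of such adjustments.
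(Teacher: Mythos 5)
Your proposal is correct and matches the paper's intent: the paper states Lemma \ref{goodinters} as a summary of Lemma \ref{transinters}, the preceding even-distribution paragraph, and Lemma \ref{badinters}, and the implicit argument is exactly your linearity-of-expectation/Markov conversion of the per-tube statements for (1) and (2), Markov applied to the average in Lemma \ref{badinters} for (3), and a union bound, with the floating $K^{\pm}$ exponents absorbing the losses.
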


Step 4. Let $T'$ be a tube obeying (1) - (3) from Lemma \ref{goodinters}.  If $I_{tr}(T', Q_j')$ holds and $I_{bad}(T', Q_j')$ does not hold, then we know that $P$ cuts $Q_j'$ at scale $K^-$.  Call such cubes good cubes.  Applying the vanishing lemma for tubes, Lemma \ref{vanishtube}, we see that $P$ cuts at scale $K^-$ on every cube intersecting $T'$ except for $K^{+} D$ bad tube segments between the good cubes $Q_j'$.  By (3) above, these bad tube segments can be covered by $\le K^+ D$ tube segments between the cubes $Q_j'$ where $I_{tr}(T', Q_j')$ holds.  By (2), these $K^{+} D$ tube segments intersect at most $K^{-(1/2) +} N$ cubes of $X$.  Therefore, $P$ cuts a fraction $(1 - K^{-(1/2)+})$ of all the cubes of $X$ that intersect $T'$.  This analysis holds for $(1 - K^-) |\frak T|$ tubes $T' \in \frak T$.

Finally, since each cube of $X$ intersects essentially the same number of tubes of $\frak T$, we see that $P$ cuts $(1 - K^-) |X|$ cubes of $X$ at scale $K^-$.
 \end{proof}

\section{Background in integral geometry} \label{revintgeom}

The Crofton formula plays an important role in studying the geometry of algebraic varieties.  It connects the $k$-dimensional volume of a surface $\Sigma^k \subset \RR^n$ with the number of intersection points between $\Sigma$ and various $(n-k)$-planes in $\RR^n$.  Let $AG(n-k, n)$ denote the affine Grassmannian of all affine $(n-k)$-planes in $\RR^n$.  The group of rigid motions of $\RR^n$ acts transitively on $AG(n-k, n)$.  Up to scaling, there is a unique invariant measure $\mu$ on $AG(n-k,n)$.  See \cite{S} for more details.  Let $| \pi \cap \Sigma|$ denote the cardinality of $\pi \cap \Sigma$.  

\begin{theorem} \label{crofton} (Cf. \cite{S}) For any $k,n$, there is a constant $C(k,n)$ so that for any k-dimensional submanifold $\Sigma^k \subset \RR^n$, 

$$ Vol_k (\Sigma) = C(k,n) \int_{AG(n-k,n)} |\pi \cap \Sigma| d\mu (\pi) . $$

\end{theorem}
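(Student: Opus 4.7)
The plan is to derive the Crofton formula by a coarea/Fubini argument, using the canonical factorization of $d\mu$. Every affine $(n-k)$-plane $\pi \in AG(n-k,n)$ can be written uniquely as $\pi = V + \lambda$, where $V \in G(n-k,n)$ is a linear $(n-k)$-plane and $\lambda \in V^\perp$. Up to normalization, the invariant measure factorizes as $d\mu(\pi) = dV \, d\lambda_V$, the product of the rotation-invariant probability measure $dV$ on the linear Grassmannian with Lebesgue measure $d\lambda_V$ on $V^\perp$. This factorization is forced by the uniqueness of invariant measure on $AG(n-k,n)$, since the right-hand side is manifestly invariant under all rigid motions.

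For each fixed $V$, apply the area formula to the orthogonal projection $p_V \colon \Sigma \to V^\perp$, a smooth map between $k$-dimensional manifolds. For almost every $\lambda$, the fiber $p_V^{-1}(\lambda) = \Sigma \cap (V + \lambda)$ is finite and transverse, and the area formula gives
$$ \int_{V^\perp} |\Sigma \cap (V + \lambda)| \, d\lambda_V = \int_\Sigma J_V(x) \, d\Vol_k(x), $$
where $J_V(x)$ is the Jacobian of $p_V|_{T_x\Sigma}$, i.e.\ the product of cosines of the principal angles between $T_x\Sigma$ and $V^\perp$.

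Now swap orders of integration via Fubini:
$$ \int_{AG(n-k,n)} |\pi \cap \Sigma| \, d\mu(\pi) = \int_\Sigma \left( \int_{G(n-k,n)} J_V(x) \, dV \right) d\Vol_k(x). $$
The inner integral depends only on the $k$-plane $T_x\Sigma$. By rotation-invariance of $dV$, the number $\int_{G(n-k,n)} J_V(W) \, dV$ is the same for every $k$-plane $W \subset \RR^n$: for any rotation $R$ sending $W_1$ to $W_2$, the change-of-variables $V \mapsto RV$ preserves $dV$ and sends $J_V(W_1)$ to $J_{RV}(W_2)$. Call this universal number $c(k,n)^{-1}$; then the right-hand side equals $c(k,n)^{-1} \Vol_k(\Sigma)$, and setting $C(k,n) := c(k,n)$ finishes the proof.

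The only real technical points to verify are (i) the factorization of $d\mu$ into $dV \cdot d\lambda_V$ and (ii) the area formula in its multiplicity form, including the vanishing contribution from tangencies where $J_V(x) = 0$. Neither is a serious obstacle: both are standard pieces of integral geometry, and the critical set has measure zero by Sard's theorem applied fiberwise. If the value of $C(k,n)$ is wanted explicitly, one can pin it down by applying the formula to a flat $k$-disk inside a linear $k$-plane, where both sides are elementary to evaluate.
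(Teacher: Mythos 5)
Your argument is correct, and it is a genuinely different route from the one sketched in the paper. You prove Crofton's formula analytically: factor the invariant measure on $AG(n-k,n)$ as $d\mu = dV\, d\lambda_V$ (Haar measure on the linear Grassmannian times Lebesgue measure on $V^\perp$), apply the area formula to the projection $p_V\colon \Sigma \to V^\perp$ to convert the fiber count into the integral of the Jacobian $J_V(x)$, and then use Fubini--Tonelli plus rotation invariance to see that $\int_{G(n-k,n)} J_V(x)\, dV$ is a constant depending only on $(k,n)$ and not on $T_x\Sigma$. This is the classical coarea-type proof, and it yields the theorem in one stroke for all submanifolds, with the constant computable by testing on a flat $k$-disk. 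The paper instead argues axiomatically: both $\Vol_k$ and the Crofton integral $Cr_k$ are rigid-motion invariant, additive over disjoint unions, and monotone, so after normalizing $C(k,n)$ on the unit $k$-cube they agree on all finite unions of $k$-cubes (rational side-lengths by subdivision, then all cubes by monotonicity); a general smooth $\Sigma$ is then handled by decomposing it into small pieces that are graphs $h\colon [0,\delta]^k \to \RR^{n-k}$ with $|\nabla h| < \epsilon$ and estimating $|Cr_k(\mathrm{graph}\, h) - \delta^k| \lesssim \epsilon\,\delta^k$ --- the same approximation scheme the paper later carries out in detail for Lemma \ref{intgeomavgest}. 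Your approach buys an exact identity with no approximation step and makes the role of the Jacobian (hence of tangencies, killed by Sard's theorem) transparent; the paper's approach avoids the area formula and the measure factorization entirely, at the cost of an $\epsilon$-$\delta$ approximation argument, and has the advantage of being the template for the cylinder-slicing variant it actually needs later. The only small points you should make explicit are the nonvanishing of the universal constant $\int_{G(n-k,n)} J_V(W)\, dV$ (so $C(k,n)$ is finite), and the joint measurability of $(V,\lambda) \mapsto |\Sigma \cap (V+\lambda)|$ needed for Tonelli; both are routine.
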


The idea of the proof of the Theorem is as follows.  Define $Cr_k(\Sigma)$ to be the right hand side of the equation.  By choosing $C(k,n)$, we can arrange that the equation holds for the unit $k$-cube $[0,1]^k \times \{ 0 \}^{n-k} \subset \RR^n$.  Now $\Vol_k (\Sigma)$ and $Cr_k(\Sigma)$ are both invariant with respect to rigid motions, so the equality holds for any unit $k$-cube in 
$\RR^n$.  Both $\Vol_k$ and $Cr_k$ are linear with respect to disjoint unions, so the equation holds for any finite union of unit $k$-cubes.
A unit cube can be cut into $N^k$ cubes of side length $1/N$ for any integer $N$.  By symmetry, each of these cubes has $Cr_k$ equal to $N^{-k}$.   Therefore, the result holds for any $k$-cube of side-length $1/N$.  Assembling such cubes, it holds for any $k$-cube of rational side-length.  Also, $\Vol_k (\Sigma)$ and $Cr_k(\Sigma)$ are both monotonic, in the sense that if $\Sigma \subset \Sigma'$, then $Cr_k(\Sigma) \le Cr_k(\Sigma')$.  Therefore, the equation holds for any $k$cube.  Since $\Vol_k$ and $Cr_k$ are linear with respect to disjoint unions, the equation holds for any finite union of $k$-cubes.  This is already pretty good evidence for the theorem.  

For a smooth surface $\Sigma$, one can proceed roughly as follows.  One decompose an arbitrary smooth surface $\Sigma$ into small pieces that are almost $k$-cubes.  Such a small piece might be given by the graph of a function $h: [0,\delta]^k \rightarrow \RR^{n-k}$ with $|\nabla h| < \epsilon$.  In this situation, it suffices to prove that $|Cr_k(graph h) - \delta^k| \lesssim \epsilon \delta^k$.  We will give an analogous argument in the proof of Lemma \ref{intgeomavgest} below.

The Crofton formula leads to estimates on the volumes of algebraic varieties.

\begin{theorem} \label{algvol} Suppose that $Z$ is a degree $D$ algebraic variety of dimension $k$ in $\RR^n$.  Let $Q$ be an $n$-dimensional cube of side length $S$.  Then 

$$ \Vol_k ( Z \cap Q) \lesssim_{k,n} D S^k.$$
\end{theorem}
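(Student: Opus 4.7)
The plan is to deduce Theorem \ref{algvol} directly from the Crofton formula (Theorem \ref{crofton}) by combining a pointwise bound on $|\pi \cap Z|$ with a measure bound on the planes $\pi$ that meet $Q$.

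First I would reduce to counting intersections with the smooth part of $Z$. The singular locus $Z_{\mathrm{sing}}$ has dimension strictly less than $k$, so $\Vol_k(Z_{\mathrm{sing}} \cap Q) = 0$, and it suffices to bound $\Vol_k(Z_{\mathrm{reg}} \cap Q)$. Applying Theorem \ref{crofton} to the $k$-dimensional submanifold $Z_{\mathrm{reg}} \cap Q$ gives
$$ \Vol_k(Z_{\mathrm{reg}} \cap Q) = C(k,n) \int_{AG(n-k,n)} |\pi \cap Z_{\mathrm{reg}} \cap Q| \, d\mu(\pi). $$

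Next I would bound the integrand. By the definition of the degree of a $k$-dimensional variety, for $\mu$-almost every affine $(n-k)$-plane $\pi \in AG(n-k,n)$ the intersection $\pi \cap Z$ is a finite set of at most $D$ points. Concretely, the set of planes $\pi$ whose intersection with $Z$ is positive dimensional, or fails to satisfy $|\pi \cap Z| \le D$, lies in a proper algebraic subvariety of $AG(n-k,n)$ and hence has $\mu$-measure zero. Therefore $|\pi \cap Z_{\mathrm{reg}} \cap Q| \le D$ for $\mu$-a.e.\ $\pi$, and the integral is at most $D \cdot \mu(\{\pi : \pi \cap Q \neq \emptyset\})$.

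Finally I would estimate $\mu(\{\pi \in AG(n-k,n) : \pi \cap Q \neq \emptyset\})$. Since $Q$ has side length $S$, a scaling/covering argument using the invariance of $\mu$ under rigid motions shows that this measure is $\lesssim_{k,n} S^k$: when $S = 1$ the measure is a constant depending only on $k,n$, and dilation by $S$ maps the set of $(n-k)$-planes meeting the unit cube into (a cover of) the set meeting the $S$-cube with Jacobian scaling like $S^k$ in the directions transverse to $\pi$. Combining the two bounds gives $\Vol_k(Z \cap Q) \lesssim_{k,n} D S^k$.

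The only delicate point is the assertion that a generic affine $(n-k)$-plane meets a $k$-dimensional variety of degree $D$ in at most $D$ points; this is essentially the defining property of degree, but one has to confirm that the exceptional set of planes where the count fails has $\mu$-measure zero, which follows because that exceptional set is algebraic in $AG(n-k,n)$ and proper. The rest of the argument is a routine combination of Crofton and scaling.
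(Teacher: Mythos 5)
Your proof is correct and follows essentially the same route as the paper: discard the singular locus, apply the Crofton formula, and use that a $\mu$-generic $(n-k)$-plane meets a degree-$D$ variety in at most $D$ points. The only minor difference is in bounding the measure of planes meeting $Q$: you use the scaling/product structure of the invariant measure on $AG(n-k,n)$ directly, whereas the paper notes that any $(n-k)$-plane meeting $Q$ must meet the $k$-skeleton $Sk_k Q$, so the Crofton integral is at most $D \cdot Cr_k(Sk_k Q)$, and then applies the Crofton formula once more to get $Cr_k(Sk_k Q) = \Vol_k(Sk_k Q) \lesssim_{k,n} S^k$; both yield the same bound.
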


Here is a sketch of the proof.  We decompose $Z$ as $Z_{smooth} \cup Z_{sing}$.  We first bound the volume of $Z_{smooth}$.  Below, we will show that the $k$-volume of $Z_{sing}$ is zero.  For $\mu$-almost every $(n-k)$-plane $\pi$, $\pi$ intersects $Z_{smooth}$ transversally.   (This can be proven using Sard's theorem.)  Since $Z$ has degree $D$, if $\pi$ intersects $Z$ transversally, $|Z \cap \pi| \le D$.  Let $Q$ be a cube of side length $S$.  We compare $Z \cap Q$ with the $k$-skeleton of $Q$ (the union of the $k$-faces of $Q$), denoted $Sk_k Q$.  We notice that if any $(n-k)$-plane $\pi$ intersects $Q$, then it must intersect one of the $k$-faces of $Q$.  Therefore, we get the following inequality: $Cr_k(Z \cap Q) \le D Cr_k (Sk_k Q)$.  By Crofton's formula, $Cr_k$ is equal to the $k$-volume, and we see $\Vol_k (Z \cap Q) \le D \Vol_k (Sk_k Q) \lesssim_{k,n} D S^k$.

On the other hand a $(k-1)$-dimensional algebraic variety such as $Z_{sing}$ must have $k$-dimensional volume zero.  Its smooth part is a $(k-1)$-dimensional manifold, which has $k$-volume zero, and its singular part is a $(k-2)$-dimensional algebraic variety, and we can proceed inductively. This finishes the sketch of the proof of Theorem \ref{algvol}. 

In Section \ref{sectgraininess}, we will use Theorem \ref{algvol} repeatedly in the proof of the graininess theorem.  We will also need another integral geometry estimate in a similar spirit, which we describe and prove here.  This estimate concerns the intersection of a surface and a random plane.

\begin{lemma} \label{intgeomavgest} Let $R \ge 1$.  Let $T_R \subset \RR^3$ be the cylinder $x_1^2 + x_2^2 < R^2$.  Let $\pi(a,b)$ be the plane defined by $x_1 + a x_2 = b$.  Let $a$ be chosen uniformly at random in $(-1/10, 1/10)$.  Let $b$ be chosen uniformly at random in $(-2 R, 2 R)$.  

Suppose that $\Sigma$ is a 2-dimensional submanifold contained in $T_R$, and $f$ is a non-negative smooth function on $\Sigma$.  Then, up to a factor $C(R)$, the following quantities agree:

$$ \int_{\Sigma} f  darea  \sim \Avg_{a,b} \int_{\Sigma \cap \pi(a,b)} f dlength. $$

\end{lemma}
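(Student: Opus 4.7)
The plan is to apply the co-area formula on $\Sigma$ to the linear function $F_a(x) := x_1 + a x_2$, whose level sets on $\Sigma$ are exactly the slices $\Sigma \cap \pi(a,b)$. For each fixed $a$, the co-area formula on the 2-manifold $\Sigma$ gives
$$ \int_{\RR} \int_{\Sigma \cap \pi(a,b)} f \, dlength \, db \;=\; \int_\Sigma f \, |\nabla^\Sigma F_a| \, darea, $$
where $|\nabla^\Sigma F_a|(p)$ is the length of the orthogonal projection of $\nabla F_a = (1,a,0)$ onto $T_p \Sigma$. Because $\Sigma \subset T_R$ and $|a| \le 1/10$, any plane $\pi(a,b)$ meeting $\Sigma$ satisfies $|b| \le \sqrt{1+a^2}\, R \le 2R$, so restricting the $b$-integral to $(-2R,2R)$ loses nothing. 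Averaging also over $a$, one obtains
$$ \Avg_{a,b} \int_{\Sigma \cap \pi(a,b)} f \, dlength \;=\; \frac{1}{4R} \int_\Sigma f(p) \cdot \Avg_a |\nabla^\Sigma F_a|(p) \, darea. $$

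The lemma therefore reduces to the pointwise claim: for every unit vector $n=(n_1,n_2,n_3)$, the number $\Avg_{a \in (-1/10, 1/10)} |\nabla^\Sigma F_a|(p)$ (computed with unit normal $n$) is pinched between two absolute positive constants. With $|n|=1$ one finds by direct expansion
$$ |\nabla^\Sigma F_a|^2 \;=\; 1 + a^2 - (n_1 + a n_2)^2 \;=\; (n_2 - a n_1)^2 + n_3^2 (1 + a^2). $$
The upper bound $|\nabla^\Sigma F_a| \le \sqrt{1+a^2} \le \sqrt{1.01}$ is immediate. For the lower bound, compute the second-moment average using $\Avg_{a \in (-1/10,1/10)} a = 0$ and $\Avg a^2 = 1/300$:
$$ \Avg_a |\nabla^\Sigma F_a|^2 \;=\; n_2^2 + \tfrac{1}{300}\, n_1^2 + \tfrac{301}{300}\, n_3^2 \;\ge\; \tfrac{1}{300}(n_1^2 + n_2^2 + n_3^2) \;=\; \tfrac{1}{300}. $$
Combined with the uniform pointwise bound $|\nabla^\Sigma F_a| \le \sqrt{1.01}$, this gives
$$ \Avg_a |\nabla^\Sigma F_a| \;\ge\; \frac{\Avg_a |\nabla^\Sigma F_a|^2}{\sqrt{1.01}} \;\ge\; \frac{1}{300\sqrt{1.01}}, $$
an absolute constant lower bound. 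The two-sided pinching then yields the stated equivalence with $C(R) \sim R$.

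The only subtle point is the uniform lower bound on $\Avg_a |\nabla^\Sigma F_a|$: geometrically, a tangent plane of $\Sigma$ aligned with one of the planes in our restricted 2-parameter family produces a tangential gradient that vanishes near a single $a_0 \in (-1/10,1/10)$, and one might worry that this makes the average collapse. The second-moment computation above absorbs this issue, showing that such degeneracy happens only at an isolated $a_0$ and the $L^2$ average stays bounded away from zero uniformly in the normal direction.
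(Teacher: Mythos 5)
Your proof is correct, but it takes a genuinely different route from the paper. You linearize the problem with the coarea formula: for fixed $a$, slicing $\Sigma$ by the level sets of $F_a(x)=x_1+ax_2$ gives $\int_{\RR}\int_{\Sigma\cap\pi(a,b)} f\,dlength\,db=\int_\Sigma f\,|\nabla^\Sigma F_a|\,darea$, the restriction of $b$ to $(-2R,2R)$ is harmless since $\Sigma\subset T_R$ forces $|b|\le\sqrt{1.01}\,R$, and the lemma reduces to the pointwise pinching of $\Avg_a|\nabla^\Sigma F_a|$ between absolute constants; your identity $|\nabla^\Sigma F_a|^2=(n_2-an_1)^2+n_3^2(1+a^2)$ and the second-moment bound $\Avg_a|\nabla^\Sigma F_a|^2\ge 1/300$, combined with the trivial upper bound $\sqrt{1.01}$ and $\Avg X\ge \Avg X^2/\sup X$, give exactly that. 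The paper instead rebuilds a Crofton-type identity from scratch: it verifies the statement for flat squares with constant $f$, upgrades to continuous $f$, then handles a general surface by a partition of unity into small graphs over tangent squares, with the truncated quantity $Cr_\epsilon$ introduced specifically to tame near-tangent planes, and finishes with a monotone-convergence approximation for non-compact $\Sigma$. Your approach buys brevity, an explicit constant $C(R)\sim R$, and automatic handling of tangency points and non-compactness (both are absorbed into the almost-everywhere nature of the coarea formula and Fubini for nonnegative integrands), at the cost of invoking the coarea formula as a black box; the paper's argument is longer but self-contained and parallels the heuristic proof of the Crofton formula given earlier in the same section. The degenerate slices where $\pi(a,b)$ contains a two-dimensional piece of $\Sigma$ occur for only a null set of $(a,b)$ and so do not affect either side of the average, so no further care is needed there.
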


In particular, if we take $f=1$, then we see that $\Area (\Sigma) \sim \Avg_{a,b} \Length (\Sigma \cap \pi(a,b) )$. 

\begin{proof} Let's define $Cr(\Sigma, f) := \Avg_{(a,b)} \int_{\pi(a,b) \cap_\epsilon \Sigma} f dlength$.  We are trying to prove that $Cr(\Sigma, f) \sim \int_\Sigma f$.  

A key special case is when $\Sigma = Q_r$ is a square of side length $r$, and $f$ is equal to 1.  In this case, $\int_\Sigma f = r^2$.  Now we evaluate $Cr(\Sigma, f)$ in this special case.  First we fix $a$, and suppose that the angle between the plane $\pi(a,b)$ and the square $Q_r$ is $\theta(a)$.  Now the measure of the set of $b \in (-2R, 2R)$ so that $\pi(a,b)$ meets $Q_r$ is $\sim r \sin \theta(a)\sim r \theta(a)$.  If $\pi(a,b)$ does meet $Q_r$, the length of the intersection is always $\lesssim r$ and usually $\sim r$.  Therefore,

$$ \Avg_b \int_{\Sigma \cap \pi(a,b)} f dlength \sim_R \theta(a) r^2$$.  

Therefore, $Cr(\Sigma, f) \sim_R r^2 \Avg_a \theta(a)$.  But $\Avg_{a \in (-1/10, 1/10)} \theta(a)$ is $\sim 1$.  This proves the result when $\Sigma$ is a square and $f$ is 1.  

Both $\int_\Sigma f$ and $Cr(\Sigma, f)$ are linear in $f$ and additive with respect to disjoint unions, so the result holds when $\Sigma$ is any union of flat squares and $f$ is constant on each square.

Morally, this shows that the lemma should be true.  We now include a fairly detailed proof explaining what to do when $f$ is non-constant and what to do when $\Sigma$ is curved.

Next consider an arbitrary continuous $f$ on a square.  We can write $f_1 \le f \le f_2$ where $f_1$ and $f_2$ are sums of characteristic functions of squares with $\int_\Sigma f_1 \sim \int_\Sigma f_2$.  Then we see that $\int_\Sigma f_1 \sim Cr(\Sigma, f_1) \le Cr(\Sigma, f) \le Cr(\Sigma, f_2) \sim \int_\Sigma f_2$, and so $\int_\Sigma f \sim Cr(\Sigma, f)$.

We now see that $\int_\Sigma f \sim Cr(\Sigma, f)$ when $f$ is any union of squares and $f$ is continuous.

Next we will prove the theorem for a compact smooth surface $\Sigma$ and a smooth function $f$ supported on the interior of $\Sigma$.   Here we start to deal with the curvature of $\Sigma$.  The trickiest part to control is where $\pi(a,b)$ is nearly tangent to $\Sigma$.  To set aside this more delicate situation, we make the following definitions.

We define $\pi(a,b) \cap_\epsilon \Sigma \subset \pi(a,b) \cap \Sigma$ as the set of points $x \in \pi(a,b) \cap \Sigma$ where $\pi(a,b)$ and $T_x \Sigma$ make an angle $> \epsilon$.   Then we define 

$$Cr_\epsilon( \Sigma, f) = \Avg_{(a,b)} \int_{\pi(a,b) \cap_\epsilon \Sigma} f dlength. $$

For all $\epsilon < (1/1000)$, the same argument as above shows that $\int_\Sigma f \sim Cr_\epsilon (\Sigma, f)$ when $\Sigma$ is a union of squares and $f$ is continuous.

We will show that $Cr_\epsilon (\Sigma, f) \sim \int_\Sigma f$ for each sufficiently small $\epsilon$.  For almost every $(a,b)$, $\pi(a,b)$ intersects $\Sigma$ transversally, and so $Cr_0(\Sigma,f) = \lim_{\epsilon \rightarrow 0} Cr_\epsilon (\Sigma,f)$.  So it suffices to show $Cr_\epsilon(\Sigma, f) \sim \int_\Sigma f$ for all sufficiently small $\epsilon$.

Let $\psi_j$ be a partition of unity on $\Sigma$, where each $\psi_j$ is supported in a ball of radius $< \delta/5$, where $\delta$ is a small number depending on $\epsilon$ that we will choose below.  The support of $\psi_j$ is contained in a graph over a square, say $h: Q_\delta \rightarrow \RR$, where $Q_\delta$ is a square of side-length $\delta$ that intersects $\Sigma$ tangentially.  Since $\Sigma$ is compact, the second fundamental form of $\Sigma$ is uniformly bounded.  We write $X \lesssim 1$ if $X$ is bounded by a constant independent of $\epsilon, \delta$.   The second fundamental form is $\lesssim 1$, and so $|\nabla h| \lesssim \delta$ and $|h| \lesssim \delta^2$.  Also, we can assume that $|\nabla f| \lesssim 1$.  We let $f_j = \psi_j f$, and we can assume that $| \nabla f_j | \lesssim \delta^{-1}$.

Define a function $\bar f_j: Q_\delta \rightarrow \RR^{\ge 0}$ so that $\bar f_j (x) = f_j (h (x))$ for all $x \in Q_\delta$.  Because $\nabla h$ is small, $\int_{Q_\delta} \bar f_j \sim \int_{\Sigma} f_j$.  We already know that $\int_{Q_\delta} \bar f_j \sim Cr(Q_\delta, \bar f_j) \sim Cr_\epsilon(Q_\delta, \bar f_j)$ for all $\epsilon < 1/1000$.

Next we will prove that if $\delta$ is much smaller than $\epsilon$, then $Cr_\epsilon(Q_\delta, \bar f_j)$ approximately agrees with $Cr_\epsilon(\Sigma, f_j)$.  More precisely, if $\delta$ is much smaller than $\epsilon$, then we will prove:

$$ Cr_{2 \epsilon}(\Sigma, f_j)   \lesssim Cr_{\epsilon}(Q_\delta, \bar f_j) + \delta^{2.01}. \eqno{(1)}$$

$$ Cr_{2 \epsilon}(Q_\delta, \bar f_j) \lesssim Cr_{\epsilon}(\Sigma, f_j) + \delta^{2.01}. \eqno{(2)}$$

We already know that $Cr_\epsilon(Q_\delta, \bar f_j) \sim \int_{Q_\delta} \bar f_j \sim \int_{\Sigma} f_j$.  Plugging this into $(1)$ and $(2)$ and summing over the partition of unity, we see that for all $\epsilon < (1/2000)$, $Cr_\epsilon(\Sigma, f) \lesssim \int_\Sigma f + \delta^{.01}$ and $\int_\Sigma f \lesssim Cr_\epsilon(\Sigma, f) + \delta^{.01}$.  Taking $\delta \rightarrow 0$, we get $Cr_\epsilon(\Sigma, f) \sim \int_\Sigma f$.  

Now we prove inequality $(1)$.  The proof of $(2)$ is similar.  
Let $\Sigma_j \subset \Sigma$ be the support of $\psi_j$.  The probability that a plane $\pi(a,b)$ intersects either $Q_\delta$ or $\Sigma_j$ is $\lesssim \delta$.  It now suffices to prove the following estimate for each $\pi(a,b)$:

$$ \int_{\pi(a,b) \cap_{2 \epsilon} \Sigma_j} f_j dlength \lesssim \int_{\pi(a,b) \cap_\epsilon Q_\delta} \bar f_j + \delta^{1.01}.  \eqno{(1')} $$

If the left-hand side is zero, we are done, so we can suppose that $\pi(a,b)$ intersects $\Sigma_j$ some point at angle $> 2 \epsilon$.  Since $|\nabla h| < \delta$ is much smaller than $\epsilon$, $\pi(a,b)$ also intersects $Q_\delta$ at angle $> \epsilon$.  The intersection $\pi(a,b) \cap Q_\delta$ is a line segment $l_\delta$, and $\pi(a,b) \cap \Sigma_j$ is contained in the graph of a function $g: l_\delta \rightarrow \RR^2$.  By the geometry of the situation, we have $|\nabla g| \lesssim \epsilon^{-1} \delta$ and $|g| \lesssim \epsilon^{-1} \delta^2$.

Because of our bound on $|\nabla g|$, $ \int_{\pi(a,b) \cap \Sigma_j}  f_j d length \lesssim \int_{l_\delta} f_j (g(x)) dx $.
At each point $x \in l_\delta \subset Q_\delta$, we have $|f_j (g(x)) - \bar f_j(x)| = |f_j (g(x)) - f_j (h(x))| \lesssim \delta^{-1} |g(x) - h(x)| \lesssim \epsilon^{-1} \delta$.  Putting it together we get

$$ \int_{\pi(a,b) \cap \Sigma_j} f_j dlength \lesssim \int_{l_\delta} \bar f_j + \epsilon^{-1} \delta^2. $$

Finally, we choose $\delta < \epsilon^2$, so the last term is $\lesssim \delta^{1.5}$, and this proves $(1')$ and hence $(1)$.  The proof of $(2)$ is similar.  This establishes our result when $\Sigma$ is a compact smooth surface with boundary and $f$ is a smooth function supported on the interior of $\Sigma$.  

The rest of the proof is a routine approximation argument.  Let $\Sigma$ be a possible non-compact surface and $f$ a smooth function on $\Sigma$.  Let $\phi_j$ be a sequence of smooth compactly supported cutoff functions on $\Sigma$, with $0 \le \phi_j \le 1$, with $\phi_j(x)$ increasing in $j$, and $\phi_j \rightarrow 1$ pointwise.  By the case we proved, $\int_{\Sigma} \phi_j f \sim Cr(\Sigma, \phi_j f)$ for each $j$ (with a uniform constant in the $\sim$).  By the monotone convergence theorem $\int_{\Sigma} \phi_j f \rightarrow \int_{\Sigma} f$, and $Cr(\Sigma, \phi_j f) \rightarrow Cr(\Sigma, f)$.  

\end{proof}

\section{Planiness and graininess estimates} \label{sectgraininess}

In this section, we use degree reduction as a tool to prove our planiness estimate Proposition \ref{planypropintro}, and our graininess estimate Theorem \ref{graininessintro}.   Let us recall these results.  They hold for sets of tubes and cubes obeying certain hypotheses.

\begin{hypoth}  \label{uniform3trans} Let $E > 1$.  
Suppose that $\frak T$ is a set of tubes in $\RR^3$ with radius 1 and length $EN$, contained in a ball of radius $EN$.  Suppose that $X$ is a set of $N^{3 - \sigma}$ disjoint unit cubes in this ball.  Suppose that $X$ and $\frak T$ obey the following conditions:

\begin{enumerate}

\item Each tube $T \in \frak T$ intersects between $N$ and $E N$ cubes of $X$.

\item Each cube of $X$ intersects between $\rho$ and $E \rho$ tubes of $\frak T$, for some $\rho \ge 3$.

\item Each point of $\RR^3$ lies in $\le E \rho$ tubes of $\frak T$.

\item (At least three directions of tubes at each point) For each cube $Q \in X$, and for any two unit vectors $v_1, v_2 \in \RR^3$, at least a fraction $E^{-1}$ of the tubes of $\frak T$ that intersect $Q$ have angle $\ge E^{-1}$ with both $v_1$ and $v_2$.

\end{enumerate}

\end{hypoth}

Our planiness estimate is the following:

\begin{prop} \label{planyprop} Assume Hypotheses \ref{uniform3trans}.  Let $\epsilon > 0$.  For each cube $Q \in X$, we can choose a plane $\pi(Q)$ through $Q$, so that for a fraction $(1 - \epsilon)$ of cubes $Q \in X$, for a fraction $(1 - \epsilon)$ of the tubes $T \in \frak T$ that meet $X$, $\Angle(v(T), \pi(Q)) \le \Poly(E, \epsilon^{-1}) N^{-\sigma}$.
\end{prop}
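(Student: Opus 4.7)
The plan is to apply Theorem \ref{degredtube} to produce a polynomial $P$ of degree $D \le \Poly(E,\epsilon^{-1}) N^{1-\sigma}$ that cuts $(1-\epsilon')|X|$ of the cubes of $X$ at a small scale $\epsilon'$; Hypothesis (4) of Hypotheses \ref{uniform3trans} trivially implies the transversality hypothesis of Theorem \ref{degredtube}. For each cut cube $Q$, the surface piece $A_Q := Z(P) \cap Q$ has area $\geq 1$, and the union $\bigcup_Q A_Q$ serves as an algebraic model of $X$.

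The central estimate is an angular defect bound for each tube. For $T \in \frak T$, let $T^+$ denote the concentric radius-$3$ thickening, which contains every unit cube that meets $T$. Since a generic line parallel to $v(T)$ meets $Z(P)$ in at most $D$ points, projection of $Z(P) \cap T^+$ onto the orthogonal disk together with the coarea formula yields
$$\int_{Z(P) \cap T^+} \sin\bigl(\Angle(v(T), T_x Z(P))\bigr) \, dA \,\lesssim\, D.$$
Setting $V(Q,T) := \int_{A_Q} \sin(\Angle(v(T), T_x Z(P))) \, dA$ for each incident pair $(Q,T)$, disjointness of the cubes of $X$ gives $\sum_{Q : Q \cap T \ne \emptyset} V(Q,T) \lesssim D$. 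For a $(1-\epsilon/100)$-fraction of tubes $T$ almost every cube along $T$ is cut by $P$, so $\Area(Z(P) \cap T^+) \gtrsim N$; combined with Hypotheses (1)--(3) and a double-counting of incident pairs, the average of $V(Q,T)$ over incidences is $\lesssim D/N \lesssim \Poly(E,\epsilon^{-1}) N^{-\sigma}$.

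By Markov, a $(1-\epsilon/10)$-fraction of incident pairs $(Q,T)$ are \emph{good}, meaning $V(Q,T) \le \Poly(E,\epsilon^{-1}) N^{-\sigma}$; for such a pair, at a $(1-\epsilon/10)$-fraction of points $x \in A_Q$ one has $\Angle(v(T), T_x Z(P)) \le \Poly(E,\epsilon^{-1}) N^{-\sigma}$. To define $\pi(Q)$, invoke Hypothesis (4): for a $(1-\epsilon)$-fraction of $Q \in X$, one can select $T_1, T_2 \in \frak T$ through $Q$ with both pairs $(Q, T_i)$ good and $\Angle(v(T_1), v(T_2)) \geq E^{-1}$. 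Let $\pi(Q)$ be the plane through the center of $Q$ spanned by $v(T_1), v(T_2)$. At a typical point $x \in A_Q$ the $2$-plane $T_x Z(P)$ contains vectors within $\Poly(E,\epsilon^{-1}) N^{-\sigma}$ of both $v(T_1)$ and $v(T_2)$; since those vectors are $E^{-1}$-separated, $T_x Z(P)$ is within angle $\Poly(E,\epsilon^{-1}) N^{-\sigma}$ of $\pi(Q)$. For any further tube $T$ through $Q$ with $(Q,T)$ good, $v(T)$ is within $\Poly(E,\epsilon^{-1}) N^{-\sigma}$ of $T_x Z(P)$ at most $x \in A_Q$, hence within the same angle of $\pi(Q)$ by the triangle inequality.

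The main obstacle is combinatorial bookkeeping: one must chain the good-subset reductions (cubes cut by $P$, tubes most of whose cubes are cut, and $(Q,T)$ pairs with small $V$) so that for a $(1-\epsilon)$-fraction of cubes $Q$, a $(1-\epsilon)$-fraction of tubes through $Q$ satisfy the planiness conclusion \emph{and} the auxiliary choices $T_1, T_2$ defining $\pi(Q)$ themselves lie in the good set. A minor technical point is that the singular locus of $Z(P)$ has area zero and may be discarded; the radius-$3$ thickening $T^+$ handles boundary cubes not fully contained in $T$.
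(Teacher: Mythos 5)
Your proposal is correct and follows essentially the same route as the paper: degree reduction via Theorem \ref{degredtube}, the cylinder tangency estimate $\int_{Z\cap T^+}|v(T)\cdot N|\lesssim \Deg P$ (the paper's Lemma \ref{cylinderestimate}), averaging over incidences to get the $N^{-\sigma}$ bound per cube, and then defining $\pi(Q)$ as the span of two $E^{-1}$-transverse tube directions supplied by Hypothesis (4), with the triangle inequality finishing the argument. The only differences are cosmetic (radius-$3$ versus radius-$200$ thickenings, and explicit $\epsilon$-fractions versus the paper's $K^{\pm}$ bookkeeping in Reasonable Cube Conditions \ref{plany}--\ref{tubeplaneangle}).
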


Our graininess estimate controls how the plane $\pi(Q')$ rotates as we vary $Q'$ within a segment of a tube $T$.  

\begin{theorem} \label{graininess} Assume Hypotheses \ref{uniform3trans}.  Let $\epsilon > 0$.  
Also assume that $N^\sigma$ is larger than some large polynomial in $E, \epsilon^{-1}$.
Then there is a large constant $K = \Poly(E, \epsilon^{-1})$ so that the following holds.
For a fraction $(1 - \epsilon)$ of intersecting pairs $(Q,T) \in X \times \frak T$, for a fraction $(1 - \epsilon)$ of the cubes $Q'$ of $X$ which intersect $T$ with $\Dist(Q,Q') \le K^{-1} N^\sigma$,

$$ \Angle (\pi(Q), \pi(Q')) \le K N^{- \sigma}. $$

\end{theorem}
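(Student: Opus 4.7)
The plan is to combine the degree-reduction polynomial with the differential geometry of its zero set. I would first apply Theorem \ref{degredtube} to obtain a non-zero $P$ with $\Deg P \le D := \Poly(E, \epsilon^{-1}) N^{1-\sigma}$ that cuts a fraction $(1 - \epsilon')$ of cubes of $X$ at a tiny scale, so $Z(P) \cap Q$ has area $\gtrsim 1$ in each such cube. For a typical $Q$ I would define $\pi(Q)$ as the common tangent plane of $Z(P)$ at most points of $Z(P) \cap Q$: by Crofton (Theorem \ref{algvol}), $\Area(Z(P) \cap T) \le D \cdot N \lesssim N^{2-\sigma}$, while the hypotheses force this area to be at least $\gtrsim N$, and almost every axial line of $T$ meets $Z(P)$ in $\le D \lesssim N^{1-\sigma}$ points, so $v(T)$ must be nearly tangent to $Z(P)$ at most intersection points inside $T$. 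Running this for the three transverse tubes through $Q$ supplied by Hypothesis (4) pins down a unique plane $\pi(Q)$ and recovers Proposition \ref{planyprop}.

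The main new step is a curvature estimate: for a typical pair $(Q,T)$ and for most points $x \in Z(P) \cap Q$, the second fundamental form satisfies $|II_x| \lesssim \Poly(E, \epsilon^{-1}) N^{-2\sigma}$. In local coordinates with $Q$ at the origin and $\pi(Q) = \{x_3 = 0\}$, one writes $Z(P)$ as a graph $x_3 = A(x_1, x_2)$, where the Hessian of $A$ at $0$ is $II$. By planiness each of the three transverse tubes $T_i$ through $Q$ makes angle $\lesssim N^{-\sigma}$ with $\pi(Q)$, so its central axis stays within vertical distance $R N^{-\sigma}$ of $\pi(Q)$ out to radius $R$; because the tube has radius $1$ and $Z(P) \cap T_i$ is densely populated by cubes of $X$, we get $|A| \lesssim 1 + R N^{-\sigma}$ along the projection of each $T_i$ out to radius $R \sim N^\sigma$. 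Evaluating at the three pairwise-transverse radial directions at $R \sim N^\sigma$ gives a non-degenerate linear system for the three quadratic coefficients of $A$, each bounded by $\lesssim R^{-2}(R N^{-\sigma}) = N^{-2\sigma}$.

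The main obstacle will be that $Z(P)$ is not actually a quadratic graph: it is a high-degree algebraic surface, and one must justify that the quadratic model is accurate throughout $B(Q, N^\sigma) \cap (T_1 \cup T_2 \cup T_3)$, not just infinitesimally at $Q$. Here the bound $\Deg P \lesssim N^{1-\sigma}$ is crucial. A generic axial line of $T_i$ meets $Z(P)$ in $\le D$ points, and the loci where $Z(P)$ is singular or where its second fundamental form is large are themselves algebraic subvarieties whose $2$-area in $B(Q, N^\sigma)$ I would estimate via Theorem \ref{algvol} together with Lemma \ref{intgeomavgest}. Averaging these estimates over $T' \in \frak T$ and discarding a small exceptional set of $(Q,T)$ pairs should propagate the curvature bound from a neighborhood of $Q$ to most points of $Z(P) \cap T$ throughout $B(Q, K^{-1} N^\sigma)$. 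This step — a differential-geometric analogue of the flatness-contagion lemma of \cite{GK} — is where I expect the most work.

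Given the curvature bound, the theorem follows by integration: the tangent plane of $Z(P)$ rotates at rate $|II|$, so along a segment of $T$ of length $K^{-1} N^\sigma$ it rotates by at most $(K^{-1} N^\sigma)(C N^{-2\sigma}) = C K^{-1} N^{-\sigma} \le K N^{-\sigma}$ once $K = \Poly(E, \epsilon^{-1})$ is chosen larger than $\sqrt{C}$. Since $\pi(Q')$ agrees with $T_x Z(P)$ at most points $x \in Z(P) \cap Q'$, this yields $\Angle(\pi(Q), \pi(Q')) \le K N^{-\sigma}$ for a fraction $(1 - \epsilon)$ of the nearby cubes $Q' \in T$, as required. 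The decisive step throughout is the curvature estimate together with its contagion along the three transverse tubes; the remaining structure parallels the flatness-contagion step in the proof of Theorem \ref{gk}.
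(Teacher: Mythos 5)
Your skeleton matches the paper's strategy (degree reduction, planiness via tangency of $Z(P)$, a curvature bound $|A_x|\lesssim K^+N^{-2\sigma}$, then integration along the tube), but the step you yourself flag as ``where I expect the most work'' is exactly the heart of the matter, and the mechanism you sketch for it does not work. You propose to control the failure of the quadratic model by bounding, via Theorem \ref{algvol} and Lemma \ref{intgeomavgest}, ``the loci where $Z(P)$ is singular or where its second fundamental form is large.'' The set $\{x\in Z: |A_x|>H\}$ is a full-dimensional semialgebraic region of the surface, not an algebraic curve, so Crofton-type length bounds say nothing about it; only the level set $A(H)=\{|A_x|=H\}$ is (generically) an algebraic curve of controlled degree. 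Moreover, small length or area of bad sets by itself gives no pointwise control: the paper has to slice $Seg(Q,T)$ by random planes $\pi(a,b)$ so that a typical slice curve $\Gamma$ misses \emph{entirely} the curves $Tan(W)$, $GFl$, $Str(W)$, $Eig(W)$ and $A(H)$, and then exploit connectivity of a component $\Gamma_1$: on $\Gamma_1$ the sign of the Gauss curvature and the straight/eigenvector directions are essentially constant, the three-directions hypothesis guarantees one of the three tubes through $Q$ has $v(T)$ quantitatively non-straight (so $|A_x|\le K^+|A_x(v_1,v_1)|$ by Lemma \ref{totalvsdirv}), and the integral of $|A(v_1,v_1)|$ along $\Gamma_1$ is identified with the net turning of the slice normal $N_\Gamma$, which is bounded by $K^+N^{-\sigma}$ using the planiness estimate at two reasonable cubes near the two ends of the segment. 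Dividing by the length $\gtrsim K^{-1-}N^\sigma$ produces one point with $|A|\lesssim K^{1+}N^{-2\sigma}$, and only then does avoidance of the level set $A(H)$ propagate the bound to all of $\Gamma_1$ (Lemma \ref{curvboundnonstraightdir}, Proposition \ref{goodcurvbound}). None of this is recoverable from your ``non-degenerate linear system at radius $R\sim N^\sigma$'' computation, which is the paper's heuristic for the degree-two model, not a proof.

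A second, related gap is in your final integration step. To convert $|A|\lesssim N^{-2\sigma}$ into $\Angle(\pi(Q),\pi(Q'))\le KN^{-\sigma}$ you need a connected curve inside $Z(P)$, running the full length of the segment from $Q$ to $Q'$, along which the curvature bound holds and which meets both $Q^+$ and $(Q')^+$ in sets where the tangent plane approximates $\pi(Q)$, $\pi(Q')$. The paper obtains this from the graph structure of $Z_{Q,nice}$ (Reasonable Cube Condition \ref{paramdisk}) together with the total-variation and transversality estimates for the slice component (Lemma \ref{totalvarbound}), which force $\Gamma_1$ to traverse the whole of $Seg(Q,T)$. Your proposal asserts the propagation ``should'' follow from averaging over tubes and discarding exceptional pairs, but without the slicing-plus-connectivity argument (or a genuine substitute for it) there is no path along which to integrate, so the concluding estimate is not established.
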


Both the results are proven by modelling $X$ by a polynomial surface of controlled degree $Z(P)$ using the degree reduction result Theorem \ref{degredtube}.  

We let $K = C (E \epsilon^{-1})^A$ for some large numbers $C, A$ that we can choose as needed.  We let $K^+$ denote a small positive power of $K$ that can change from line to line, and we let $K^-$ denote a small negative power of $K$ that can change from line to line.  In every occurence, $K^+ \ge (E \epsilon^{-1})^{10}$, and similarly $K^- \le (E \epsilon^{-1})^{-10}$.  On the other hand, in each occurence, $K^+ \le K^{1/100}$, so that $K^{(1/4)-}$ is always bigger than 1.

By Theorem \ref{degredtube}, we can find a polynomial $P$ of degree $\le K^+ N^{1 - \sigma}$ that cuts $(1 - K^-) |X|$ cubes of $X$ at scale $K^-$.  Using this degree bound, we will study the geometry of $Z(P)$ and use it to prove our results about the geometric structure of $X$ and $\frak T$. 

The proof of Proposition \ref{planyprop} is based on studying the tangent planes of $Z(P)$.  It will turn out that  $\pi(Q)$ is well approximated by the tangent plane $T_x Z(P)$ for most $x \in Z(P) \cap Q$.

The proof of Theorem \ref{graininess} is based on controlling the curvature of $Z(P)$.  Essentially we will show that the curvature has size $\le K^+ N^{- 2 \sigma}$ at many points.  

In order to carry out this plan, we will have to prove a sequence of estimates on the geometry of $Z(P)$.  It slightly simplifies matters to know that $Z(P)$ is smooth and irreducible.  We can assume this without loss of generality for the following reason.  Using Theorem \ref{degredtube}, we saw that there is a polynomial $P_0$ of degree $\le K^+ N^{1- \sigma}$ that cuts $(1 - K^-)$ of the cubes of $X$ at scale $K^-$.  If a polynomial $P$ lies in a tiny neighborhood of $P_0$, then $P$ cuts all the same cubes at a slightly larger scale.  This happens because we can arrange that the set of points $x \in B^{K^+ N}$ where the sign of $P$ differs from the sign of $P_0$ has volume less than $K^{-10}$ by taking the neighborhood small enough.  Therefore, we can choose a generic polynomial $P$ in some tiny ball in the space of polynomials.  In this way we can arrange that $\nabla P$ is non-vanishing on $Z(P)$ and so $Z(P)$ is a smooth surface.  By the same genericity argument, we can assume that $P$ is irreducible.

We let $Z$ denote $Z(P) \cap B(K^+ N)$, the part of $Z(P)$ in the ball containing $X$.  

Here is an outline of this section.  In Section 4.1, we study the geometry of $Z(P)$ in a typical cube $Q \in X$.  We prove that $Z(P) \cap Q$ resembles a union of nearly-parallel planes.  The plane $\pi(Q)$ is an approximation of the tangent plane of these planes.  In this section, we prove Proposition \ref{planyprop}.  In Section 4.2, we study the geometry of $Z(P)$ on a typical segment $Seg$ of a tube $T \in \frak T$ of length $\sim N^\sigma$.  In particular, we start to focus on the second fundamental form of $Z(P)$, and we prove that the second fundamental form is morally constant away from a small set of bad curves in $Z(P)$.  In Section 4.3, we consider the intersection of $Z(P) \cap Seg$ with a random plane -- the resulting slice $\Gamma$ avoids the bad curves, and so the second fundamental form of $Z(P)$ is morally constant along such a slice.  In Section 4.4, we begin to prove curvature estimates.   A unit vector $v \in T_x Z$ is called straight if the second fundamental form of $Z$ vanishes in the direction $v$.  If the direction of the tube $T$ is far from straight at some points of the slice $\Gamma$, then we get good curvature estimates for $Z$ on the set $\Gamma$.  
In Section 4.5, we prove that for most $Q \in X$, at most points of $Z(P) \cap Q$, the second fundamental form is bounded by roughly $N^{-2 \sigma}$.  The key observation here is that each point $x \in Z(P)$ lies in three different tubes of $\frak T$ in quantitatively different directions.  At most two of these directions can be straight and at least one must be far from straight.  Using the tube in the far from straight direction, and applying the bounds from Section 4.4, we get a curvature estimate at almost all $x \in Z(P) \cap Q$.  
In Section 4.6, we use this curvature bound to control how the tangent plane $\pi(Q)$ rotates as $Q$ slides along a segment of $T$, and we prove Theorem \ref{graininess}.

\subsection{Reasonable cubes} \label{secreascube}

We say that a condition on a cube $Q$ is a reasonable cube condition if it holds for $(1 - K^-) |X|$ cubes $Q \in X$.  
When we defined $P$ above (using Theorem \ref{degredtube}), we saw that $P$ cuts $Q$ at scale $K^-$ for $(1 - K^-) |X|$ cubes $Q \in X$.  Thus we get:

\begin{reascube} \label{cuts} The polynomial $P$ cuts $Q$ at scale $K^-$.
\end{reascube}

The next condition involves the normal vector.  Let $N := \nabla P / |\nabla P|$ be the unit normal vector to $Z(P)$.  The vector $N$ is defined everywhere on $Z(P)$, because $\nabla P$ is non-vanishing on $Z(P)$.  For a tube $T \in \frak T$, let $v(T)$ be a unit vector parallel to the axis of $T$.  The vector $v(T)$ is well-defined up to sign, and we make an arbitrary choice for each tube $T$.

\begin{lemma} \label{cylinderestimate} If $T_R$ is any cylinder of radius $R$ and infinite length, then the following estimate holds.  

$$ \int_{x \in Z(P) \cap T_R} |v(T_R) \cdot N(x)| dx \le \pi R^2 \Deg P. $$

\end{lemma}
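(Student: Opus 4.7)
The plan is to view the inequality as an instance of the area formula applied to the orthogonal projection onto a plane perpendicular to the axis of the cylinder. After a rigid motion I will assume $v(T_R) = e_3$ and $T_R = \{(x_1,x_2,x_3) : x_1^2 + x_2^2 \le R^2\}$. Let $\pi : \RR^3 \to \RR^2$ be the projection $(x_1,x_2,x_3) \mapsto (x_1,x_2)$. Since $Z(P)$ is a smooth surface (by our genericity assumption), the Jacobian of $\pi$ restricted to $Z(P)$ at a point $x$ is exactly $|e_3 \cdot N(x)|$, the cosine of the angle between $T_x Z(P)$ and the horizontal plane. The standard area formula then gives
\begin{equation*}
\int_{Z(P) \cap T_R} |v(T_R) \cdot N(x)| \, dx \;=\; \int_{y \in \pi(T_R)} \#\{\, x \in Z(P) \cap T_R : \pi(x) = y \,\} \, dy.
\end{equation*}

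The next step is to bound the multiplicity pointwise for almost every $y$. For a fixed $y = (y_1,y_2)$, the fiber $\pi^{-1}(y)$ is the vertical line $\{y\} \times \RR$. If $P$ restricted to this line is not identically zero, then it is a nonzero univariate polynomial of degree $\le \Deg P$, and so has at most $\Deg P$ zeros; equivalently $\pi^{-1}(y) \cap Z(P)$ has cardinality at most $\Deg P$. So I just need to check that the set of "bad" $y$ for which the vertical line lies entirely in $Z(P)$ has Lebesgue measure zero.

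For this, write $P(y_1,y_2,t) = \sum_k a_k(y_1,y_2)\,t^k$. The vertical line through $y$ is contained in $Z(P)$ iff $a_k(y_1,y_2)=0$ for every $k$. Since $P \not\equiv 0$, at least one $a_k$ is a nonzero polynomial in two variables, whose zero set in $\RR^2$ has Lebesgue measure zero. Hence the bad set is contained in a measure-zero algebraic set and contributes nothing to the integral.

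Combining these pieces, for almost every $y \in \pi(T_R)$ the multiplicity is at most $\Deg P$, and $\pi(T_R)$ is a disk of radius $R$, of area $\pi R^2$. Therefore
\begin{equation*}
\int_{Z(P) \cap T_R} |v(T_R) \cdot N(x)| \, dx \;\le\; \Deg P \cdot \Area(\pi(T_R)) \;=\; \pi R^2 \Deg P,
\end{equation*}
which is the desired estimate. The only mildly delicate step is the measure-zero argument ruling out fibers contained in $Z(P)$; everything else is a direct application of the area formula and the fundamental theorem of algebra for $P$ restricted to a line.
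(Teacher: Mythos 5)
Your proof is correct and follows essentially the same route as the paper, which only sketches this argument (citing Lemma 2.1 of \cite{Gu1}): interpret the weighted integral as the area of the projection of $Z(P)\cap T_R$ onto a cross-sectional disk counted with multiplicity, bound the multiplicity by $\Deg P$ via the one-variable polynomial $P$ restricted to each vertical line, and discard the measure-zero set of lines contained in $Z(P)$. Your write-up simply makes the Jacobian/area-formula step and the measure-zero argument explicit, which is a faithful filling-in of the paper's sketch rather than a different approach.
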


This estimate is Lemma 2.1 in \cite{Gu1}.  The idea is that the integral on the left hand-side is the area (counted with multiplicity) of the projection of $Z \cap T_R$ onto a cross-section of $T_R$.  This projection covers almost every point of the cross-section at most $\Deg P$ times, because a line intersects $Z(P)$ at most $\Deg P$ times unless the line lies in $Z(P)$.  Also the cross-section is a disk of radius $R$. So the area of the projection counted with multiplicity is at most $\pi R^2 \Deg P$.

We will sometimes want to discuss fatter versions of tubes $T \in \frak T$ or cubes $Q \in X$.  For a tube $T \in \frak T$, we let $T^+$ be the concentric cylinder of radius 100 instead of radius 1.  For a cube $Q \in X$, we let $Q^+$ be the concentric cube of side length 1000 instead of side length 1.

\begin{reascube} \label{plany} 

$$ \Avg_{T \in \frak T, T \textrm{ meets } Q^+} \left( \int_{Z \cap Q^+} |v(T) \cdot N(x)| \right) \le K^+ N^{- \sigma}. $$

\end{reascube}

\begin{proof}  Fix any cylinder $T \in \frak T$.  We apply Lemma \ref{cylinderestimate} to the concentric cylinder around $T$ with radius 200.  Any cube $Q$ so that $Q^+$ meets $T$ lies in this larger cylinder.  Also, the cubes of $X$ are disjoint, and so each point lies in $O(1)$ of the cubes $Q^+$.  So we get the following estimate.

$$ \sum_{Q \in X, Q^+ \textrm{ meets } T} \int_{x \in Z \cap Q^+} |v(T) \cdot N(x)| dx \le K^+ N^{1 - \sigma}. $$

By hypothesis, there are $\ge N$ cubes $Q \in X$ that meet $T$.  Therefore, for each $T$, we get

$$ \Avg_{Q \in X, Q^+ \textrm{ meets } T}  \int_{x \in Z \cap Q^+} |v(T) \cdot N(x)| dx \le K^+ N^{- \sigma}. $$

Since this holds for every $T \in \frak T$, it also holds when we average over $T \in \frak T$.  We get

$$\Avg_{T \in \frak T} (\Avg_{Q \in X, Q^+ \textrm{ meets } T}  \int_{x \in Z \cap Q^+} |v(T) \cdot N(x)| dx) \le K^+ N^{- \sigma}. $$

Since each tube has essentially the same number of cubes, and each cube lies in essentially the same number of tubes, changing the order of the two averages can only increase the right-hand side by a factor $K^+$.  Therefore, for $(1- K^-) |X|$ cubes $Q \in X$, we have

$$ \Avg_{T \in \frak T, T \textrm{ meets } Q^+}  \int_{x \in Z \cap Q^+} |v(T) \cdot N(x)| dx \le K^+ N^{- \sigma}. $$

\end{proof}

Next we prove that the normal vector is nearly constant (in an average sense) on $Z \cap Q$.

\begin{reascube} \label{tangplane} There is a plane $T_Q Z$ so that
$\int_{Z \cap Q^+} \Angle(T_x Z, T_Q Z) \le K^+ N^{-\sigma}$ and  $ \Avg_{x \in Z \cap Q^+} \Angle(T_x Z, T_Q Z) \le K^+ N^{-\sigma}. $

\end{reascube}

\begin{proof} By the transversality hypothesis, we can choose tubes $T_1$, $T_2$ in $\frak T$ which meet $Q$ so that the angle between $v(T_1)$ and $v(T_2)$ is $\ge E^{-1}$ and so that for both tubes $T_i$, 
$  \int_{Z \cap Q^+} |v(T_i) \cdot N(x)| \le K^+ N^{- \sigma}. $  If $T_Q Z$ is the plane spanned by $v(T_1)$ and $v(T_2)$, then we get $\int_{Z \cap Q^+} \Angle(T_x Z, T_Q Z) \le K^+ N^{-\sigma}$.  
On the other hand, by Reasonable Cube Condition \ref{cuts}, $P$ cuts $Q$ at a small scale, and so $\Area Z \cap Q \ge 1$, so we can bound the average by the integral.
\end{proof}

For each reasonable cube $Q$, we pick a plane $T_Q Z$ obeying the condition of the lemma.  The plane $T_Q Z$ is well-defined up to a rotation by angle $\le K^+ N^{-\sigma}$ - within this small range of possibilities we make an arbitrary choice.

\begin{reascube} \label{tubeplaneangle} $ \Avg_{T \in \frak T, T \textrm{ meets } Q^+} \Angle(v(T), T_Q Z) \le K^+ N^{- \sigma} . $

\end{reascube}

\begin{proof} For any $x \in Z \cap Q+$, $\Angle(v(T), T_Q Z) \le \Angle (v(T), T_x Z) + \Angle(T_x Z, T_Q Z)$.  We want to study the average size of $\Angle(v(T), T_Q Z)$ over all $T \in \frak T$ that meet $Q^+$.  Reasonable Cube Condition \ref{plany} says that the average size of the first angle is $\le K^+ N^{- \sigma}$.  Reasonable Cube Condition \ref{tangplane} says that the average size of the second angle is $\le K^+ N^{-\sigma}$.  Combining the bounds, we get:

$$ \Avg_{T \in \frak T, T \textrm{ meets } Q^+} \Angle(v(T), T_Q Z) \le K^+ N^{- \sigma} . $$

\end{proof}

This result immediately implies our planiness estimate, Proposition \ref{planyprop}: we take $\pi(Q)$ to be $T_Q Z$.

Since $Z$ cuts any reasonable cube $Q$, we know that $\Area (Z \cap Q) \ge 1$ for any reasonable cube.  We can also show that, for a reasonable cube, the area is not larger.

\begin{reascube} $\Area(Z \cap Q^+) \le K^+$
\end{reascube}

\begin{proof}  By the Crofton formula (see Theorem \ref{algvol}), the area of $Z$ in our ball of radius $\le K^+ N$ is at most $K^+ N (\Deg P)^2 \le K^+ N^{3 - \sigma}$.  The number of cubes $Q \in X$ is $N^{3 - \sigma}$.  The cubes $Q$ are disjoint, and the cubes $Q^+$ overlap with bounded multiplicity.  Therefore, there are at most $K^- |X|$ cubes $Q$ so that $\Area (Z \cap Q^+) \ge K^+$.  
\end{proof}

Next we study more closely the geometry of $Z \cap Q$.  For a reasonable cube $Q$, we will prove that $Z \cap Q$ consists of a union of nearly flat disks with small holes cut out of them and with a surface of small area glued in.  As far as I know, this piece of small area may include thin tubes connecting one of the disks to another as well as stalagmites and stalagtites sticking up and down from the disks, and it may have non-trivial topology.
Let us formulate this result precisely.

The geometry of $Z$ is nicest in a cylinder around $Q$ described as follows.
 We choose (orthogonal) coordinates $(x_1, x_2, x_3)$ so that the origin is the center of $Q$ and $T_Q Z$ is the $(x_1,x_2)$-plane.  Then we let $Cyl_H(Q)$ be cylinder defined by equations $x_1^2 + x_2^2 < 100$, and $|x_3| < H$.  We focus on $H$ in the range $[10,20]$, so that we always have $Q \subset Cyl_H(Q) \subset Q^+$.  Now for most $H \in [10,20]$, we will prove that $Z \cap Cyl_H(Q)$ has the following structure.

\begin{reascube} \label{paramdisk} Let $\lambda = K^{-10}$.   For most $H \in  [10,20]$, the following holds.
 
\begin{enumerate}

\item There exist functions $f_j: B^2(10) \rightarrow (-H,H)$ with Lipschitz constant $\le 10 \lambda$.

\item There is a finite set of disjoint ``bad'' balls $B_i \subset B^2(10)$ with the sum of the radii at most $K^+ \lambda^{-2} N^{-\sigma}$.  We define $Y := B^2(10) \setminus (\cup_i \bar B_i)$.

\item The graph of each function $f_j: Y \rightarrow (-H,H)$ lies in $Z \cap Q^+$.  

\item The graphs are close together in the sense that for each $y \in B^2(10)$ and each $h \in [-10,10]$, there exists a $j$ so that $|f_j(y) - h| \le K^-$.  (Therefore, the number of graphs $f_j$ is $\ge K^+$.)

\item The graphs are also disjoint and maintain their order in the following sense: at each $y \in Y$, $f_1(y) < f_2(y) < ...$, and at each $y \in B^2(10)$, $f_1(y) \le f_2(y) \le ...$.

\item The part of $Z \cap Cyl_H(Q)$ outside of the graphs of $f_j: Y \rightarrow \RR$ has area $\le K^+ \lambda^{-1} N^{- \sigma}$.

\end{enumerate}

\end{reascube}

Remark: This result actually holds for a range of $\lambda$, but taking $\lambda = K^{-10}$ is a good choice for our applications below.

We define $Z_{Q, nice} \subset Z \cap Cyl_H(Q) \subset Z \cap Q^+$ to be the union of the graphs of $f_j$ over $Y$.  For $x \in Z_{Q,nice}$, we have $\Angle( T_x Z, T_Q Z) \le 10 \lambda \le K^{-10+}$.  

\begin{proof}

In the proof of Reasonable Cube Condition \ref{paramdisk}, it helps to better understand how the plane $T_x Z$ varies for $x \in Q^+$.    Let $w$ be a unit vector in $\RR^3$.  Consider the set $Tan(w) := \{ x \in Z | \nabla P(x) \cdot w = 0 \}$.  This is the set of points $x \in Z$ where $w \in T_x Z$.

\begin{lemma} \label{tan(w)est} For each $w \in S^2$, the set $Tan(w) \subset Z$ is a curve of length $\le K^+ N^{3-2 \sigma}$.
\end{lemma}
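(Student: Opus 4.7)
The plan is to realize $Tan(w)$ as the intersection of two algebraic surfaces and then apply the volume bound from Theorem \ref{algvol}. Let $\partial_w P := \nabla P \cdot w$, which is a polynomial of degree at most $D - 1$, where $D := \Deg P \le K^+ N^{1-\sigma}$. Because $Z(P)$ is smooth, $\nabla P$ is normal to $Z(P)$ at every point, so the condition $w \in T_x Z$ is equivalent to $\partial_w P(x) = 0$. Hence $Tan(w) = Z(P) \cap Z(\partial_w P) \cap B(K^+ N)$, an intersection of two algebraic varieties of degree $\le D$.

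First I would verify that this intersection is proper, i.e.\ that $Tan(w)$ is at most $1$-dimensional. Since $P$ is irreducible and $\deg(\partial_w P) < \deg P$, the only way for $\partial_w P$ to vanish identically on $Z(P)$ would be $\partial_w P \equiv 0$ as a polynomial. I would rule this out by strengthening the genericity argument already used to ensure that $Z(P)$ is smooth and irreducible: the open condition that $\partial_1 P, \partial_2 P, \partial_3 P$ be linearly independent as polynomials may be added at no cost, and forces $\partial_w P \not\equiv 0$ for every $w \in S^2$.

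Next I would bound the degree of $Tan(w)$ as an algebraic curve and apply Theorem \ref{algvol}. For a generic plane $\pi \in AG(2,3)$, the slices $\pi \cap Z(P)$ and $\pi \cap Z(\partial_w P)$ are plane algebraic curves of degrees $\le D$ and $\le D - 1$ sharing no common irreducible component (using irreducibility of $P$ together with the previous step), so by Bezout's theorem in the plane they meet in at most $D(D-1) \le D^2$ points. Hence $Tan(w)$, viewed as a $1$-dimensional real algebraic variety, has degree $\le D^2$. Applying Theorem \ref{algvol} with $k=1$, $n=3$, on a cube of side length $\sim K^+ N$ containing the ball $B(K^+ N)$ then yields
$$ \Length(Tan(w)) \lesssim D^2 \cdot K^+ N \le (K^+)^2 N^{2(1-\sigma)} \cdot K^+ N = K^+ N^{3 - 2\sigma}. $$

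The only delicate step is the properness check, which relies on the irreducibility of $P$ and the flexibility of the genericity construction; everything afterwards is a mechanical application of Bezout in the plane and the Crofton-based volume bound of Theorem \ref{algvol}.
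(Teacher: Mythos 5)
Your argument is correct and is essentially the paper's own proof: you realize $Tan(w)$ inside $\{P=0,\ \nabla P\cdot w=0\}$, use irreducibility plus a genericity tweak to rule out the degenerate case $\partial_w P\equiv 0$ (the paper phrases this as ``$Z(P)$ would be a cylinder, which doesn't occur for generic $P$''), bound the degree of the resulting curve by $(\Deg P)^2$, and apply the Crofton-type bound of Theorem \ref{algvol} to get length $\le K^+ N (\Deg P)^2 \le K^+ N^{3-2\sigma}$. The only difference is that you justify the degree bound for the intersection curve via Bezout on generic plane sections, a step the paper simply asserts.
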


\begin{proof} $Tan(w)$ lies in the variety defined by the two equations: $P(x) = 0$ and $w \cdot \nabla P(x) = 0$.  Since $P$ is irreducible, either this variety is all of $Z(P)$ or else it is an algebraic curve of degree $\le (\Deg P)^2$.  If this variety is all of $Z(P)$, then $Z(P)$ is a cylinder.  This doesn't occur for generic $P$, so we can ignore it.  

By the Crofton formula, an algebraic curve of degree $D$ in $B^3(K^+ N)$ has length $\le K^+ N D$.  In our case, the length is $\le K^+ N (\Deg P)^2 \le K^+ N^{3 - 2 \sigma}$.
\end{proof}

Let $W_\lambda$ denote a $\lambda$-net of points in $S^2$, with $|W| \sim \lambda^{-2}$.  We let $Tan(W_\lambda) := \cup_{w \in W_\lambda} Tan(w)$. The total length of $Tan(W_\lambda)$ is $\le \lambda^{-2} K^+ N^{3 - 2 \sigma}$.  Since there are $N^{3 - \sigma}$ cubes $Q \in X$, a reasonable $Q$ obeys the following estimate:

\begin{reascube} \label{Tan(W_lambda)bound} The length of $Q^+ \cap Tan(W_\lambda)$ is $\le \lambda^{-2} K^+ N^{-\sigma}$.
\end{reascube}

This condition says that $Tan(W_\lambda \cap Q)$ is almost empty.  To get a perspective, let's consider what would happen if it were empty.  If $Q^+ \cap Tan(W_\lambda)$ were empty, then $T_x Z$ would be nearly constant on each component of $Z \cap Q^+$.  If $Q^+ \cap Tan(W_\lambda)$ were empty, then the normal vector $N(x)$ would never be perpendicular to any $w \in W_\lambda$.  The unit vectors normal to a fixed $w \in W_\lambda$ form a great circle $w^\perp$ on $S^2$.  We let $W_\lambda^\perp := \cup_{w \in W_\lambda} w^\perp$.  The complement $S^2 \setminus W_\lambda^\perp$ is a union of open cells of diameter $< 2 \lambda$.  Therefore, if $Q \cap Tan(W_\lambda)$ were empty, then on each connected component of $Z \cap Q$, the normal vector $N(x)$ could vary by at most $2 \lambda$.  

I believe that $Tan(w) \cap Q$ is small but may be non-empty for all cubes $Q \in X$.
$Tan(w)$ is an algebraic curve of degree $\le (\Deg P)^2 \le K^+ N^{2 - 2 \sigma}$.  Such a curve may have as many as $N^{4 - 4 \sigma}$ connected components, and so a reasonable cube $Q$ may contain $\sim N^{1 - 3 \sigma}$ connected components of $Tan(w)$.  If $\sigma < 1/3$, especially if $\sigma$ is close to zero, I suspect that $Tan(w) \cap Q$ may contain a large number of very short curves.

Let $N(Q)$ be the unit vector normal to $T_Q Z$.  Let $G_0 = G_0 \subset S^2$ be a small neighborhood of $N(Q)$, whose boundary lies in $W_\lambda^\perp$.  We can arrange that $G_0$ contains the $(1/10) \lambda$ neighborhood of $N(Q)$, and is contained in the $5 \lambda $-neighborhood of $N(Q)$, and that $\partial G_0 \subset W_\lambda^\perp$.  (If $N(Q)$ is not too close to $W_\lambda^\perp$, then $G_0$ is a single component of $S^2 \setminus W_\lambda^\perp$.  But if $N(Q)$ is within $(1/10) \lambda$ of $W_\lambda^\perp$, then $G_0$ must contain two or more components.)
We let $G := \{ x \in Z \cap Q^+ | N(x) \in G_0 \}$.  The letter $G$ stands for `good'  - these are the points of $Z \cap Q^+$ with good tangent planes.  We define $B:= (Z \cap Q^+) \setminus G$.

\begin{lemma} \label{boundaryG} $\Length(\partial G \cap Q^+) \le K^+ \lambda^{-2} N^{-\sigma}$.
\end{lemma}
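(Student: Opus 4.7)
The plan is to show that $\partial G$ is contained in $\mathrm{Tan}(W_\lambda)$, and then invoke Reasonable Cube Condition \ref{Tan(W_lambda)bound}. This is essentially immediate from the definitions, so the proof is quite short.

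\textbf{Step 1: identify $\partial G$ via the Gauss map.} The set $G = \{x \in Z \cap Q^+ : N(x) \in G_0\}$ is the preimage of $G_0 \subset S^2$ under the continuous Gauss map $N : Z \cap Q^+ \to S^2$. Since $G_0$ is open in $S^2$, the topological boundary $\partial G$ inside $Z$ is contained in the preimage $N^{-1}(\partial G_0)$, i.e.\ in the set of $x \in Z$ where $N(x) \in \partial G_0$.

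\textbf{Step 2: pass from $\partial G_0$ to $\mathrm{Tan}(W_\lambda)$.} By construction, $\partial G_0 \subset W_\lambda^\perp = \bigcup_{w \in W_\lambda} w^\perp$. So for each $x \in \partial G$, there exists $w \in W_\lambda$ with $N(x) \cdot w = 0$. Since $N(x)$ is a positive multiple of $\nabla P(x)$, this says $w \cdot \nabla P(x) = 0$, which is precisely the condition $x \in \mathrm{Tan}(w)$. Hence
$$ \partial G \subset \bigcup_{w \in W_\lambda} \mathrm{Tan}(w) = \mathrm{Tan}(W_\lambda). $$

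\textbf{Step 3: invoke the length bound.} Intersecting with $Q^+$ and applying Reasonable Cube Condition \ref{Tan(W_lambda)bound}, which gives $\Length(Q^+ \cap \mathrm{Tan}(W_\lambda)) \le K^+ \lambda^{-2} N^{-\sigma}$, yields
$$ \Length(\partial G \cap Q^+) \le \Length(\mathrm{Tan}(W_\lambda) \cap Q^+) \le K^+ \lambda^{-2} N^{-\sigma}, $$
as desired.

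There is essentially no obstacle here; the entire content of the lemma is already encoded in the definitions of $G_0$ (its boundary lies in $W_\lambda^\perp$) and of $\mathrm{Tan}(w)$ (the Gauss-map preimage of $w^\perp$), together with the length bound on $\mathrm{Tan}(W_\lambda) \cap Q^+$ for a reasonable cube. The only mild subtlety is remembering that $\partial G$ is the boundary of $G$ taken inside the smooth surface $Z$ — any contribution from $\partial Q^+$ does not arise because we intersect with the open set $Q^+$.
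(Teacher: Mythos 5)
Your proof is correct and is exactly the paper's argument: the paper's entire proof is the one-line observation that $\partial G \subset Tan(W_\lambda)$, which you have simply spelled out (Gauss-map preimage of $\partial G_0 \subset W_\lambda^\perp$, then Reasonable Cube Condition \ref{Tan(W_lambda)bound}). The expanded justification, including the remark about the boundary being taken inside $Z$, matches the intended reading.
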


\begin{proof} We have $\partial G \subset Tan(W_\lambda)$. \end{proof}

\begin{lemma} $\Area B \le K^+ \lambda^{-1} N^{-\sigma}.$ 
\end{lemma}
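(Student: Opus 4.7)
The proof should be a direct application of Markov's inequality to the tangent-plane integral bound from Reasonable Cube Condition \ref{tangplane}. Recall that $B$ consists of those $x \in Z \cap Q^+$ whose normal $N(x)$ lies outside the good region $G_0 \subset S^2$. By construction, $G_0$ contains the $(1/10)\lambda$-neighborhood of $N(Q)$, so for every $x \in B$ we have $\Angle(N(x), N(Q)) \ge (1/10)\lambda$, which is the same as $\Angle(T_x Z, T_Q Z) \ge (1/10)\lambda$.

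The plan is therefore to write
$$ \tfrac{\lambda}{10} \cdot \Area(B) \;\le\; \int_{B} \Angle(T_x Z, T_Q Z)\, dx \;\le\; \int_{Z \cap Q^+} \Angle(T_x Z, T_Q Z)\, dx, $$
and then apply Reasonable Cube Condition \ref{tangplane}, which bounds the right-hand integral by $K^+ N^{-\sigma}$. Rearranging gives $\Area(B) \le 10 K^+ \lambda^{-1} N^{-\sigma}$, and absorbing the factor of $10$ into the implicit $K^+$ yields the stated bound.

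There is really no obstacle here: everything needed has already been established. The only small point worth noting is that Reasonable Cube Condition \ref{tangplane} is itself a reasonable-cube property, so the estimate above is being asserted for cubes $Q$ that are reasonable for both conditions \ref{tangplane} and (implicitly, through the definition of $G_0$ and the set-up) the previous ones; since the intersection of finitely many reasonable-cube conditions is still reasonable, this causes no trouble.
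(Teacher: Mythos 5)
Your proposal is correct and is essentially identical to the paper's own proof: the paper likewise notes that $x \in B$ forces $\Angle(T_x Z, T_Q Z) \ge (1/10)\lambda$ and then applies the integral bound of Reasonable Cube Condition \ref{tangplane} via Markov's inequality. No issues.
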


\begin{proof} For $x \in B$, $\Angle (T_x Z, T_Q Z) \ge (1/10) \lambda$.  But by Reasonable Cube Condition \ref{tangplane}, we have 
$\int_{Z \cap Q^+} \Angle (T_x Z, T_Q Z) \le K^+ N^{-\sigma}$.
\end{proof}

At this point, we exploit the geometry of $Cyl_H(Q)$.  The boundary of $Cyl_H(Q)$ consists of a top and bottom (defined by $x_3 = \pm H$) and the side (defined by $x_1^2 + x_2^2 = 100$).  By choosing $H$ generically, we can arrange that the intersection of $Z$ with the top and bottom are small.

$$ \Avg_{H \in [10,20]} \Length( Z \cap \textrm{ top and bottom of } Cyl_H(Q)) \le \int_{Z \cap Q^+} |\Angle(T_x Z, T_Q Z)| \le K^+ N^{- \sigma}. $$

Therefore, for all $H \in [10,20]$ except for a subset of length $K^-$, the following Lemma holds.

\begin{lemma} \label{topbottomcyl}
$\Length( Z \cap \textrm{ top and bottom of } Cyl_H(Q)) \le K^+ N^{-\sigma}$.
\end{lemma}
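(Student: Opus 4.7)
The plan is to deduce the lemma from the averaged inequality displayed immediately before the statement by Markov's inequality, so the real content is justifying that averaged bound in the first place. I would do this with the coarea formula applied to the height function $\phi(x) = x_3$ restricted to $Z \cap Q^+$. Because coordinates are chosen so that $T_Q Z$ is the $(x_1, x_2)$-plane, the tangential gradient satisfies $|\nabla_Z \phi(x)| = |\sin(\Angle(T_x Z, T_Q Z))|$ at every (smooth) point of $Z$.

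Next I would observe that for every $H \in [10,20]$, the top and bottom of $Cyl_H(Q)$ are the disks $\{x_3 = \pm H\} \cap \{x_1^2 + x_2^2 \le 100\}$, both contained in $Q^+$ (the concentric cube of side length $1000$ is huge compared to the cylinder). Applying the coarea formula to $\phi$ on the piece of $Z$ lying inside $\{x_1^2+x_2^2 \le 100, |x_3| \le 20\} \subset Q^+$,
\begin{equation*}
\int_{10}^{20} \Length\bigl(Z \cap \text{top and bottom of } Cyl_H(Q)\bigr)\, dH \le 2 \int_{Z \cap Q^+} |\sin \Angle(T_x Z, T_Q Z)|\, dA.
\end{equation*}
Bounding $|\sin \theta| \le |\theta|$ and invoking Reasonable Cube Condition \ref{tangplane} gives the right-hand side $\le K^+ N^{-\sigma}$, and dividing by the interval length $10$ recovers the averaged inequality displayed before the lemma.

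Finally I would apply Markov. If the average length is $\le K^+ N^{-\sigma}$, then choosing a slightly larger threshold $K^{+\prime} := K^+ \cdot K^{1/2}$, the set of $H \in [10,20]$ on which $\Length(Z \cap \text{top and bottom})$ exceeds $K^{+\prime} N^{-\sigma}$ has measure at most $10 K^+ / K^{+\prime} = 10 K^{-1/2} \le K^-$. Re-labeling $K^{+\prime}$ as $K^+$ (still a small positive power of $K$) yields the lemma for all $H \in [10,20]$ outside this exceptional set.

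There is no real obstacle: this is a routine coarea-plus-Markov argument. The only minor point to check is that for $H$ in the specified range, the tops and bottoms of $Cyl_H(Q)$ really do sit inside $Q^+$ where Reasonable Cube Condition \ref{tangplane} is available, and this is immediate from the diameters.
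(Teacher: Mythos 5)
Your proposal is correct and is essentially the paper's own argument: the paper simply asserts the averaged bound $\Avg_{H \in [10,20]} \Length(Z \cap \textrm{top and bottom of } Cyl_H(Q)) \le \int_{Z \cap Q^+} \Angle(T_x Z, T_Q Z) \le K^+ N^{-\sigma}$ (which is exactly your coarea computation for the height function, using $|\nabla_Z x_3| = \sin \Angle(T_x Z, T_Q Z)$) and then discards an exceptional set of $H$ of measure $K^-$ by Markov, just as you do. The only cosmetic point is your amplification factor $K^{1/2}$, which violates the paper's convention that $K^+ \le K^{1/100}$; replacing it by a factor such as $(E\epsilon^{-1})^{10}$ (or $K^{1/200}$) gives exceptional measure $\le K^-$ while keeping the threshold of the form $K^+ N^{-\sigma}$.
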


From now on, we restrict to $H \in [10,20]$ where Lemma \ref{topbottomcyl} holds.  

We define the bad curves to be $\partial G \cap Cyl_H(Q)$ together with the intersection of $G$ with the top and bottom of $Cyl_H(Q)$.  By Lemma \ref{boundaryG} and Lemma \ref{topbottomcyl} their total length is $\le K^+ \lambda^{-2} N^{-\sigma}$.  We let $\pi: Cyl_H(Q) \rightarrow B^2(10)$ be the projection $(x_1, x_2, x_3) \rightarrow (x_1, x_2)$.  The projection of the bad curves still has total length $\le K^+ \lambda^{-2} N^{-\sigma}$.

\begin{lemma} \label{badballsradii} The projection of the bad curves can be covered by finitely many disjoint balls $B_i$ with the sum of the radii at most $K^+ \lambda^{-2} N^{-\sigma}$.
\end{lemma}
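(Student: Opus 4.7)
The plan is to first establish a length bound on the projection of the bad curves, and then invoke a general planar covering lemma to convert that bound into a cover by pairwise disjoint disks of comparable total radius.

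For the length bound, I combine the two preceding lemmas. Lemma \ref{boundaryG} gives $\Length(\partial G \cap Cyl_H(Q)) \le K^+ \lambda^{-2} N^{-\sigma}$, and Lemma \ref{topbottomcyl} gives that the length of $Z$ in the top and bottom of $Cyl_H(Q)$ is at most $K^+ N^{-\sigma}$. Adding, the total length of the bad curves in $Cyl_H(Q)$ is at most $K^+ \lambda^{-2} N^{-\sigma}$. Since the projection $\pi(x_1, x_2, x_3) = (x_1, x_2)$ is $1$-Lipschitz, the one-dimensional Hausdorff measure of $\Gamma := \pi(\textrm{bad curves}) \subset B^2(10)$ is also at most $K^+ \lambda^{-2} N^{-\sigma}$.

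For the covering step, I would reduce to the following planar fact: any compact $S \subset \RR^2$ with one-dimensional Hausdorff measure at most $L$ can, for every $\varepsilon > 0$, be covered by finitely many pairwise disjoint closed disks of total radius at most $L + \varepsilon$. The proof is a monotone merging argument. By the definition of Hausdorff measure together with compactness, start from a finite closed-disk cover $\{D(y_j, s_j)\}$ of $S$ with $\sum_j s_j \le L + \varepsilon$. As long as two disks $D(x_1, r_1), D(x_2, r_2)$ in the current family meet, replace the pair by their smallest enclosing disk. The diameter of $D(x_1, r_1) \cup D(x_2, r_2)$ is at most $|x_1 - x_2| + r_1 + r_2 \le 2(r_1 + r_2)$, so the enclosing disk has radius at most $r_1 + r_2$; the replacement therefore does not increase the sum of radii, while strictly decreasing the number of disks by one. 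Since the process starts with finitely many disks, it terminates after finitely many steps in the required pairwise disjoint cover, still of total radius at most $L + \varepsilon$.

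Applying this covering lemma to $\Gamma$ with $L = K^+ \lambda^{-2} N^{-\sigma}$, and choosing $\varepsilon$ small enough to be absorbed into the $K^+$, produces the disjoint bad balls $B_i$ required. I do not anticipate a serious obstacle: the length estimate is an immediate consequence of the two preceding lemmas together with the Lipschitz property of $\pi$, and the covering argument rests only on the elementary observation that the smallest disk enclosing two meeting closed disks has radius at most the sum of their radii, so that the sum of radii is monotone under merging.
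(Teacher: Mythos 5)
Your proposal is correct, and its second half (replace any two intersecting disks by one enclosing disk of radius at most the sum of the radii, iterate until disjoint, note the sum of radii never increases and the process terminates) is exactly the paper's argument. The only real difference is how the initial cover is produced: the paper observes that the bad curves consist of finitely many connected components $\gamma_i$, and that the projection of each $\gamma_i$ sits inside a single ball of radius $r_i \le \Length(\gamma_i)$, so the total radius is immediately bounded by the total length $\le K^+ \lambda^{-2} N^{-\sigma}$; you instead invoke the general fact that a compact planar set of $1$-dimensional Hausdorff measure $\le L$ admits a finite disk cover of total radius $\le L + \varepsilon$. Your route is more general (it works for any rectifiable set, not just a finite union of curves) but costs you the measure-theoretic covering argument and a compactness hypothesis on the projection, whereas the paper's component-wise cover is immediate here because the bad curves are pieces of algebraic curves with finitely many components of finite length. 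One small imprecision worth fixing: from ``the union has diameter $\le 2(r_1+r_2)$'' you cannot in general conclude an enclosing disk of radius $\le r_1+r_2$ (half-diameter enclosure fails for arbitrary planar sets, e.g.\ an equilateral triangle); for two intersecting disks the claim is nevertheless true, since the smallest enclosing disk is centered on the segment of centers and has radius $\tfrac{1}{2}\bigl(r_1+r_2+|x_1-x_2|\bigr) \le r_1+r_2$, which is also the (unproved) assertion the paper relies on.
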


\begin{proof} The bad curves are a union of finitely many connected components $\gamma_i$.  The projection of $\gamma_i$ is contained in a ball of radius $r_i \le \Length(\gamma_i)$.  So we can cover all the projections by balls with the sum of the radii bounded by the total length of the bad curves, which is at most $K^+ \lambda^{-2} N^{-\sigma}$.

These balls may not be disjoint.  But if two balls of radii $r_1$ and $r_2$ intersect, they may be covered by one ball of radius $r_1 + r_2$.  So in our list of balls, we can replace two intersecting balls with one larger ball maintaining our bound on the sum of the radii.  Doing this repeatedly, we arrive at a collection of disjoint balls where the sum of the radii obeys the desired bound.  \end{proof}

Let $Y$ be $B^2(10) \setminus (\cup_i \bar B_i)$.  Since the balls $B_i$ are disjoint, $Y$ is connected.
Here we removed the closed balls $\bar B_i$ so that $Y$ is an open set.  

We let $\pi: Cyl_H(Q) \rightarrow B^2(10)$ be the projection to the $(x_1, x_2)$ coordinates.  We let $G' := \{ x \in G \cap Cyl_H(Q) | \pi(x) \subset Y \}$.  Note that $G'$ is an open subset of $G$, so it is also a manifold.  We now prove that $\pi: G' \rightarrow Y$ is a covering map.  The map $\pi: G' \rightarrow Y$ is a local diffeomorphism because the tangent plane of $x \in G$ is close to the $(x_1,x_2)$-plane.  It just remains to check that the map $\pi: G' \rightarrow Y$ is a proper map.  In other words, we have to check that if $K \subset Y$ is compact, then $\pi^{-1}(K) \subset G'$ is also compact.  The map $\pi$ extends to the closure $\bar G'$, and $\pi^{-1}(K)$ is automatically a compact subset of $\bar G'$, and the issue is to check whether $\pi^{-1}(K)$ contains any boundary points of $\bar G'$.  
To check this, we have to prove that $\pi$ maps the boundary of $G'$ to the complement of $Y$.  The boundary of $G'$ has several types of curves: curves in $\partial G \cap Cyl_H(Q)$ are mapped to $\cup B_i$; curves in $G$ intersected with the top and bottom of $Cyl_H(Q)$ are mapped to $\cup_i B_i$; and curves in $G$ intersected with the sides of $Cyl_H(Q)$ are mapped to $\partial B^2(10)$.  Therefore, $\pi: G' \rightarrow Y$ is a proper map, and so it is a covering map. 

Now we study the map $\pi: G' \rightarrow Y$ using the structure of covering maps.  Since $Y$ is connected, the number of points in each preimage $\pi^{-1}(y)$ is constant.  Let the cardinality of the fibers be $\kappa$.  If we take a based loop in $Y$, we can look at the holonomy of the covering over the based loop.  The holonomy is a permutation of the points in the fiber over the base point of the loop.  But the vertical order of the points is preserved by the holonomy, and therefore the holonomy is the identity.  Therefore, $G'$ is the union of the graphs of $\kappa$ functions $f_j: Y \rightarrow (-H, H)$.  We can label the graphs so that at each point $y \in Y$, $f_1(y) < f_2(y) < ... $

We remark that we have not yet checked that $\kappa > 0$.  This is a somewhat tricky point.  We will prove below that $\kappa > K^+$.  

Because the tangent plane of each point $x \in G$ has angle $\le 5 \lambda$ with the $(x_1, x_2)$-plane, each function $f_j$ obeys $|\nabla f_j| \le 5 \lambda$.  

\begin{lemma} If $p, p'$ are two points in $Y$, then $|f_j(p) - f_j(p')| \le 10 \lambda |p - p'|$, where $|p-p'|$ is the Euclidean distance between $p, p'$ in $B^2(10)$.  
\end{lemma}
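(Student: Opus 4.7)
The plan is to deduce the global Lipschitz estimate from the pointwise gradient bound $|\nabla f_j| \le 5\lambda$ (which was just established from $\Angle(T_xZ, T_QZ) \le 5\lambda$ on $G$) by connecting $p$ to $p'$ by a path inside $Y$ whose length exceeds $|p-p'|$ by at most a small absolute factor, and then integrating the gradient along that path. The only reason the straight segment $\overline{pp'}$ may fail to work is that it might cross some of the closed bad balls $\bar B_i$, so the heart of the argument is detouring around those balls cheaply.

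Concretely, I would construct the path as follows. Take the straight segment $\overline{pp'}$. For each bad ball $B_i$ that the segment enters, say at $a_i \in \partial B_i$ and exits at $b_i \in \partial B_i$, replace the chord $\overline{a_i b_i}$ by the shorter of the two arcs of $\partial B_i$ from $a_i$ to $b_i$, pushed outward by a small amount $\epsilon > 0$ (and connected to $a_i$, $b_i$ by nearly radial segments of length $O(\epsilon)$). Because the $\bar B_i$ form a finite pairwise disjoint collection in $B^2(10)$, there is a uniform $\epsilon_0 > 0$ such that for every $\epsilon < \epsilon_0$ these outward-pushed arcs all lie in $Y$ and do not interfere with one another. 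The result is a piecewise smooth curve $\gamma_\epsilon \subset Y$ joining $p$ to $p'$.

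To bound $\Length(\gamma_\epsilon)$ I would invoke the elementary planar fact that for a chord of a circle of radius $r$ subtending half-angle $\alpha \in (0, \pi/2]$, the arc-to-chord ratio equals $\alpha/\sin\alpha$, which is always at most $\pi/2$. Applying this to each crossed $B_i$, and noting that the chord segments inside distinct $B_i$ are disjoint subintervals of $\overline{pp'}$, the sum of arcs replacing them is at most $\pi/2$ times the sum of those chord lengths, so
\[
\Length(\gamma_\epsilon) \le (\pi/2)\,|p-p'| + O(\epsilon).
\]
Integrating $|\nabla f_j| \le 5\lambda$ along $\gamma_\epsilon$ then gives $|f_j(p) - f_j(p')| \le 5\lambda \cdot \Length(\gamma_\epsilon) \le (5\pi/2)\lambda\,|p-p'| + O(\epsilon)$, and letting $\epsilon \to 0$ yields the claim with constant $5\pi/2 < 10$.

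The one subtle point is that $\partial B_i$ itself has been removed from $Y$, so the detour cannot literally run along $\partial B_i$; this is precisely what the outward $\epsilon$-perturbation handles, using disjointness of the bad balls to ensure the perturbations are consistent for all sufficiently small $\epsilon$. Beyond that, the proof is a routine combination of an arc-versus-chord estimate with integration of the gradient along a piecewise smooth path.
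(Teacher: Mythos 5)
Your argument is essentially the paper's own proof: the paper also takes the straight segment from $p$ to $p'$, replaces each portion inside a bad ball by an arc of that ball's boundary (using the same arc-to-chord ratio bound of $\pi/2$), perturbs slightly to land inside the open set $Y$, and integrates the gradient bound $|\nabla f_j| \le 5\lambda$ to conclude. The only difference is bookkeeping of constants (the paper settles for length $\le 2|p-p'|$ after perturbation, giving exactly $10\lambda$, while you keep $\pi/2 + O(\epsilon)$), so this is the same proof.
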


\begin{proof} Consider the segment $\gamma$ from $p$ to $p'$ in $B^2(10)$.  This segment intersects the balls $B_i$ in some disjoint smaller segments $\gamma \cap B_i$.  Replacing each segment $\gamma \cap B_i$ with a piece of the arc of the boundary of $B_i$, we get a curve $\tilde \gamma$ in $\bar Y$ of length at most $(\pi/2) |p- p'|$.  Perturbing the curve a bit, we get a curve from $p$ to $p'$ in $Y$ of length $\le 2 | p - p'|$.  Now we integrate $\nabla f_j$ along this curve, and conclude $|f_j(p) - f_j(p')| \le 2 | p - p' | \cdot 5 \lambda$.  \end{proof}

Now it follows that $f_j$ extends to a Lipschitz function from $B^2(10)$ to $(-30,30)$ with the Lipschitz constant $10 \lambda$.  The extension procedure is to define, for any $p \in B^2(10)$

$$f_j(p) := \max_{y \in Y} f_j(y) - 10 \lambda | y - p|. $$

If $p \in Y$, then the maximum on the right-hand side is achieved by $y=p$, and so the new definition agrees with the original definition of $f_j$ on $Y$.  It's standard to check that $f_j$ still obeys $|f_j(p) - f_j(p')| \le 10 \lambda |p - p'|$ for all $p, p' \in B^2(10)$.  Also,
since the $f_j$ obey $f_1 < f_2 < ...$ on $Y$, it follows that $f_1 \le f_2 \le ...$ on $B^2(10)$.

Next, we prove that the complement $Z \cap Cyl_H(Q) \setminus G'$ has small area.  The complement $Z \cap Cyl_H(Q) \setminus G'$ lies in the union of $B$ and the set $B' := \{ x \in G \cap Cyl_H(Q) | \pi(x) \in \cup_i \bar B_i \}$.  We already know that $\Area (B) \le K^+ \lambda^{-1} N^{-\sigma}$.  We will prove an area estimate for this latter set $B'$.  

\begin{lemma} For a reasonable cube $Q$, $\Area B' \le K^+ \lambda^{-4} N^{-2 \sigma}$.
\end{lemma}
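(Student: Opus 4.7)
The plan is to decompose $B' = G \cap Cyl_H(Q) \cap \pi^{-1}(\cup_i \bar B_i)$ according to how its components connect to the cylindrical boundary $\pi^{-1}(\partial \bar B_i)$. I will call a connected component of $G \cap \pi^{-1}(\bar B_i)$ a \emph{sheet piece} if it meets $\pi^{-1}(\partial \bar B_i)$, and a \emph{bubble} otherwise (so its entire boundary lies on bad curves inside $\pi^{-1}(\bar B_i)$). For bubbles I will use the planar isoperimetric inequality; for sheet pieces I will use that $\pi:G' \to Y$ is a $\kappa$-sheeted covering together with the Lipschitz graph structure, where $\kappa \le K^+$.

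For the sheet pieces, a generic fiber over $\partial \bar B_i$ meets $G$ in exactly $\kappa$ points by continuity of the cover, so each bad ball admits at most $O(\kappa)$ sheet pieces. Each such piece lies in a Lipschitz graph of slope $\le 10\lambda$ over a subregion of $\bar B_i$, hence has area at most $2 \pi r_i^2$. The upper bound $\kappa \le K^+$ follows from the previously established Reasonable Cube Condition $\Area(Z \cap Q^+) \le K^+$ together with the fact that each sheet of $G'$ is nearly horizontal and so has area $\gtrsim \Area(Y) \gtrsim 1$. Using $\sum_i r_i^2 \le (\sum_i r_i)^2$ and the total length bound $\sum_i r_i \le K^+ \lambda^{-2} N^{-\sigma}$ furnished by Lemmas \ref{boundaryG}, \ref{topbottomcyl}, and \ref{badballsradii}, the total sheet-piece area is at most $K^+ \lambda^{-4} N^{-2\sigma}$.

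For the bubbles, the projection $\pi: G \to B^2(10)$ has Jacobian in $[1/2, 2]$ since $\Angle(N(x), e_3) \le 5\lambda$ on $G$, and each bubble $\beta$ has Euclidean diameter at most $2r_i \ll 1$. Hence $\pi$ restricts to a near-isometric embedding of $\beta$ into a planar region of the disk $\bar B_i$, and the planar isoperimetric inequality gives $\Area(\beta) \le C\, \Length(\partial \beta)^2$ for each $\beta$. Summing via $\sum a_j^2 \le (\sum a_j)^2$ and using that $\bigcup_\beta \partial \beta$ lies in the bad curves,
\[
\sum_\beta \Area(\beta) \;\le\; C \Bigl( \sum_\beta \Length(\partial \beta) \Bigr)^2 \;\le\; C (K^+ \lambda^{-2} N^{-\sigma})^2 \;=\; K^+ \lambda^{-4} N^{-2\sigma}.
\]
Adding the sheet-piece and bubble bounds yields $\Area B' \le K^+ \lambda^{-4} N^{-2\sigma}$.

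The step I expect to be the main obstacle is making the topological decomposition precise, especially controlling the number of sheet pieces per ball by $O(\kappa)$: a single graph $f_j$ could in principle enter and exit $\bar B_i$ several times, and bad curves inside $\bar B_i$ could spawn extra components or cause sheets to merge. Verifying that each bubble projects injectively under $\pi$ (so that the planar isoperimetric inequality applies in its bare form, without an extra multiplicity factor) is a secondary but related delicate point. The small Lipschitz constant $10\lambda$ of the $f_j$, together with the smallness of the diameter $r_i$ relative to $\lambda = K^{-10}$, is the quantitative tool that rules out pathological foldings and so underwrites both points.
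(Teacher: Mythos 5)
Your proposal has a genuine gap, and it is exactly the one you flag: both halves of your decomposition rest on unproved injectivity/multiplicity control for the projection $\pi$ over the bad balls, and nothing in the setup supplies it. A connected component of $G\cap\pi^{-1}(\bar B_i)$ need not be a single-valued Lipschitz graph over a subregion of $\bar B_i$: all that is known about points of $B'\subset G$ is that their tangent planes are within angle $5\lambda$ of horizontal, so $\pi$ is merely an immersion there, and inside the bad balls the covering structure of $G'$ over $Y$ is simply unavailable (that is the whole reason the balls were excised). For instance, two consecutive graphs $f_j,f_{j+1}$ can be vertically within $10\lambda r_i$ of each other and be joined over $\bar B_i$ by a nearly horizontal ramp, producing a single multi-sheeted ``sheet piece''; more generally a nearly horizontal spiral whose inner edge is a bad curve inside $\pi^{-1}(\bar B_i)$ gives multiplicity bounded a priori only by $\Deg P$, not by $\kappa$. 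The same objection hits the bubbles: without injectivity of $\pi$ on a bubble you cannot invoke the planar isoperimetric inequality (and even the claim that a bubble has diameter $\le 2r_i$ uses it, since a spiral of small slope can have large intrinsic diameter and hence nontrivial vertical extent). Your proposed remedy --- that the Lipschitz constant $10\lambda$ together with $r_i\ll 1$ rules out foldings --- is not a proof; smallness of $\lambda$ alone does not exclude these configurations, and ruling them out would require re-using the bad-curve length bound in an essential quantitative way, which your argument does not do. Likewise the count of $O(\kappa)$ sheet pieces per ball is not justified: a single graph closure can meet $\pi^{-1}(\partial\bar B_i)$ in many arcs belonging to different components.

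The paper's proof avoids all of this by never projecting injectively: it first bounds the total boundary length, $\Length\,\partial B'\le K^+\lambda^{-2}N^{-\sigma}$, using three contributions (the bad curves $\partial G$ via Lemma \ref{boundaryG}, the top and bottom of $Cyl_H(Q)$ via Lemma \ref{topbottomcyl}, and the part over $\cup_i\partial B_i$, which lies in the closures of the $\kappa\le K^+$ graphs and so has length $\le K^+\sum_i r_i$ by Lemma \ref{badballsradii}), and then applies Stokes' theorem directly on the surface: since tangent planes on $B'\subset G$ are nearly horizontal, one can orient each component $B'_a$ so that $\Area B'\le 2\sum_a\int_{\partial B'_a}(x_1-c_a)\,dx_2\le 2\sum_a(\Length\,\partial B'_a)^2\le 2(\Length\,\partial B')^2$. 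This isoperimetric step is insensitive to how many times $\pi$ covers a point of $\bar B_i$ and needs no decomposition into graphs and bubbles. If you want to salvage your write-up, replace the ``near-isometric embedding plus planar isoperimetric inequality'' and the per-ball sheet count by this Stokes argument; the ingredients you already assembled (the bound $\kappa\le K^+$ and the bound on $\sum_i r_i$) are then exactly what is needed for the boundary-length estimate.
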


\begin{proof} First we will estimate the length of $\partial B'$.  If $x \in \partial B'$, then either $x \in \partial G$ or $\pi(x) \in \partial B_i$ for some bad ball $B_i$ or $x$ lies in the top or bottom of the cylinder $Cyl_H(Q)$.  We deal with the parts separately.  

By Lemma \ref{boundaryG}, the length of $\partial G \cap Cyl_H(Q)$ is bounded by $K^+ \lambda^{-2} N^{-\sigma}$.

By Lemma \ref{topbottomcyl}, $\Length( Z \cap \textrm{ top and bottom of } Cyl_H(Q)) \le K^+ N^{-\sigma}$.

The boundary points with $\pi(x) \in \partial B_i$ lie in the graphs of the functions $f_j$.  
The number of layers $\kappa$ is controlled by the area of $Z \cap Cyl_H(Q)$ which is $\le K^+$.  The total length of the boundaries of the $B_i$ is controlled by the sum of the radii which is $\le K^+ (\lambda)^{-2} N^{-\sigma}$.  So the length of this part of boundary of $B'$ is also bounded by $K^+ \lambda^{-2} N^{-\sigma}$.

In total, $\Length \partial B' \le K^+ \lambda^{-2} N^{-\sigma}$.

Since $B' \subset G$, the tangent plane at each point of $B'$ is almost tangent to the $(x_1, x_2)$-plane.  We can now choose an orientation on (each component of) $B'$ so that 
$\Area B' \le 2 \int_{B'} dx_1 \wedge dx_2$.  We now evaluate this integral using Stokes theorem.  Let the boundary of $B'$ be the union of connected curves $\partial B'_a$.  We have

$$ \Area B' \le 2 \sum_a \int_{\partial B'_a} x_1 dx_2. $$

We choose $c_a$ to be the $(x_1)$-coordinate of  a point in $\partial B'_a$.  Therefore, $|x_1 - c_a| \le \Length \partial B'_a$ for all $x \in \partial B'_a$.  Since the integral over a closed curve of $c dx_2$ vanishes, we can rewrite the boundary integral as

$$  \sum_a \int_{\partial B'_a} (x_1 - c_a) dx_2 \le \sum_a (\Length \partial B'_a)^2 \le (\Length \partial B')^2 \le K^+ \lambda^{-4} N^{-2 \sigma}. $$

\end{proof} 

Since $N^\sigma$ is much larger than $\lambda^{-1} = K^{10}$, we have $\Area B' \le K^+ \lambda^{-4} N^{-2 \sigma} \le K^+ \lambda^{-1} N^{-\sigma}$.

Therefore, we get

\begin{lemma} \label{ZsetminusG'}
$ \Area Cyl_H(Q) \cap (Z \setminus G') \le K^+ \lambda^{-1} N^{-\sigma}.$
\end{lemma}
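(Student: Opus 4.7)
The plan is to assemble this lemma directly from pieces that have already been established immediately above. The starting observation is that $Z \cap Cyl_H(Q) \setminus G'$ is covered by two sets: the bad-tangent-plane set $B = (Z \cap Q^+) \setminus G$ consisting of points where $N(x) \notin G_0$, and the set $B' \subset G \cap Cyl_H(Q)$ of good points whose $\pi$-projection lands inside one of the bad balls $\bar B_i$. Every point in $Z \cap Cyl_H(Q)$ either has good tangent plane (hence lies in $G$) or bad (hence in $B$); among the good points, either the projection lies in $Y$ (hence in $G'$) or in $\cup_i \bar B_i$ (hence in $B'$). So indeed $Cyl_H(Q) \cap (Z \setminus G') \subset B \cup B'$, and it suffices to bound the areas of $B$ and $B'$ separately and add them.

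For $B$, I would invoke the bound $\Area B \le K^+ \lambda^{-1} N^{-\sigma}$ that was already proved from Reasonable Cube Condition \ref{tangplane} combined with the fact that $\Angle(T_x Z, T_Q Z) \ge (1/10) \lambda$ on $B$. For $B'$, the lemma just established gives $\Area B' \le K^+ \lambda^{-4} N^{-2 \sigma}$. Using the standing assumption that $N^\sigma$ dominates any fixed polynomial in $K$ (in particular $\lambda^{-3} = K^{30}$), one has
\[
K^+ \lambda^{-4} N^{-2 \sigma} = K^+ \lambda^{-1} N^{-\sigma} \cdot (\lambda^{-3} N^{-\sigma}) \le K^+ \lambda^{-1} N^{-\sigma}.
\]

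Adding the two bounds and absorbing the constant into $K^+$ yields the desired estimate. There is really no obstacle to speak of here: the lemma is essentially a bookkeeping corollary that packages the previous two area estimates into the single quantity $\Area(Cyl_H(Q) \cap (Z \setminus G'))$, which is the form needed for the later arguments in Section 4.2 where one wants to pass from $Z \cap Cyl_H(Q)$ to the nearly flat graphs over $Y$ with controlled error.
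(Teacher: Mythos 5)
Your proof is correct and is essentially identical to the paper's argument: the paper likewise writes $Z \cap Cyl_H(Q) \setminus G' \subset B \cup B'$, invokes the earlier bound $\Area B \le K^+ \lambda^{-1} N^{-\sigma}$ and the lemma $\Area B' \le K^+ \lambda^{-4} N^{-2\sigma}$, and uses the standing assumption that $N^\sigma$ exceeds a large power of $K$ to absorb the $B'$ term. Nothing is missing.
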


We are now ready to prove that $\kappa > 0$ so that the set of functions $f_j$ we have been studying is not empty.  

\begin{lemma} We have $\kappa > 0$.  Moreover, at each point $y \in B^2(1)$, for each $h \in [-H, H]$, there is some $j$ so that $|f_j(y) - h| \le K^-$.
\end{lemma}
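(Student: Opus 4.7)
I will argue by contradiction. Fix $y_0 \in B^2(1)$ and $h_0 \in [-H, H]$, and assume no graph $f_j$ satisfies $|f_j(y_0) - h_0| \le K^-$ for a chosen $K^-$. Let $\rho$ be a particular small negative power of $K$ (for concreteness, $\rho = K^{-1/100}/4$), picked so that $\rho$ lies in the admissible bisection range $[K^-,1]$, so that $\lambda\rho \ll \rho$, and so that $\rho^2$ dominates the area bound on $Z\setminus G'$. Consider the ball $B_\rho := B((y_0,h_0), \rho)$. Since $y_0 \in B^2(1)$, this ball is contained in $Cyl_H(Q)$ (up to a harmless boundary adjustment if $h_0$ is within $\rho$ of $\pm H$, which can be handled by shrinking $\rho$ or shifting the center) and lies at distance $\le 21 \le 1/K^-$ from $Q$, so that both Reasonable Cube Condition \ref{cuts} and Lemma \ref{ZsetminusG'} apply.

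The first step is to upgrade the pointwise assumption at $y_0$ to a statement about all of $B_\rho$. Using the Lipschitz bound $|\nabla f_j| \le 10\lambda = 10K^{-10}$, for every $y \in B^2(y_0, \rho)$ we get $|f_j(y) - f_j(y_0)| \le 10\lambda\rho \ll \rho$, whence $|f_j(y) - x_3| > \rho/4 > 0$ for all $(y, x_3) \in B_\rho$ and all $j$. Thus no graph of any $f_j$ enters $B_\rho$, forcing $Z \cap B_\rho \subset (Z\setminus G')\cap Cyl_H(Q)$. Lemma \ref{ZsetminusG'} then yields the key area estimate $\Area(Z\cap B_\rho) \le K^+\lambda^{-1}N^{-\sigma}$.

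The second step contradicts this area bound using bisection. By Reasonable Cube Condition \ref{cuts}, $\Vol(\{P>0\}\cap B_\rho)$ and $\Vol(\{P<0\}\cap B_\rho)$ each exceed $(1/2 - K^-)\Vol B_\rho \sim \rho^3$. Since $\partial\{P>0\}\cap B_\rho \subset Z\cap B_\rho$, the interior boundary of the positivity region in $B_\rho$ has area at most $K^+\lambda^{-1}N^{-\sigma}$. But the relative isoperimetric inequality in a Euclidean ball of radius $\rho$ forces this area to be at least $c\rho^2$ whenever the set and its complement each occupy volume comparable to $\Vol B_\rho$. Combining the two inequalities gives $N^\sigma \lesssim K^+\lambda^{-1}\rho^{-2}$, which is merely a fixed polynomial in $K = \Poly(E,\epsilon^{-1})$, contradicting the standing assumption that $N^\sigma$ exceeds a sufficiently large polynomial in $E,\epsilon^{-1}$. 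The density statement, and in particular $\kappa > 0$, follows. (The remark after the lemma that $\kappa \ge K^+$ is then immediate by pigeonholing $K^+$ values $h \in [-H,H]$ spaced apart by $\gg K^-$.)

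The main obstacle will be the simultaneous bookkeeping of the three competing small parameters: the cutting scale $K^-$ from Condition \ref{cuts}, the slope scale $\lambda = K^{-10}$ of the graphs $f_j$, and the area bound $K^+\lambda^{-1}N^{-\sigma}$ on $Z\setminus G'$. The radius $\rho$ must simultaneously be an admissible bisection radius, small enough for the Lipschitz extrapolation to sweep every graph out of $B_\rho$, and yet give $\rho^2$ much larger than $K^+\lambda^{-1}N^{-\sigma}$. Once $\rho$ is chosen correctly, the remaining ingredients---Lipschitz propagation, the area estimate from Lemma \ref{ZsetminusG'}, and the relative isoperimetric inequality in a ball---are standard.
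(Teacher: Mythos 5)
Your proposal is correct and is essentially the paper's own argument: the paper proves that $G'$ must meet every ball of radius $K^-$ in $Cyl_H(Q)$ by comparing the area lower bound forced by the cutting condition with the area upper bound on $Cyl_H(Q)\cap(Z\setminus G')$ from Lemma \ref{ZsetminusG'}, and then transfers this to the point $(y,h)$ via the $10\lambda$-Lipschitz bound, exactly as you do (you additionally spell out, through the relative isoperimetric inequality, the step the paper only asserts, namely that near-bisection of $B_\rho$ forces $\Area(Z\cap B_\rho)\gtrsim\rho^2$). The one bookkeeping point to make explicit is that your contradiction hypothesis must be taken at a threshold comparable to $\rho$ (say $4\rho$, still an admissible value of $K^-$ and at least the cutting scale), since a graph with $|f_j(y_0)-h_0|$ larger than a much smaller $K^-$ but smaller than $\rho$ would still enter $B_\rho$ and spoil the sweep-out step.
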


\begin{proof} First we prove that $\kappa \ge 1$.  If $\kappa = 0$, then $G'$ would be empty.  By Lemma \ref{ZsetminusG'}, we would have $\Area Cyl_H(Q) \cap Z \le K^+ \lambda^{-1} N^{- \sigma}$.  But $P$ cuts $Q$ at scale $K^-$, and so $\Area Z \cap Q \ge 1$.  This contradiction shows that $\kappa \ge 1$.

We can apply the same argument to any ball of radius $\ge K^-$ in $Cyl_H(Q)$.  Since $P$ cuts $Q$ at scale $K^-$, the intersection of $Z$ with any such ball has area $\ge K^-$.  By Lemma \ref{ZsetminusG'}, the area of $Cyl_H(Q) \cap (Z \setminus G')$ is much smaller than $K^-$.  Therefore, $G'$ enters every ball of radius $K^-$ in $Cyl_H(Q)$.

So for any $y \in B^2(10)$, and any height $h \in (-H, H)$, there exists a point $y' \in Y$ with $| y - y'| \le K^-$ and a $j$ so that $|f_j(y') - h | \le K^-$.  Since $f_j$ is $10 \lambda$ Lipschitz, we see that $|f_j(y) - h| \le K^-$ as well.  \end{proof}

This finishes the proof of Reasonable Cube Condition \ref{paramdisk}. \end{proof}

We say that $Q \in X$ is a reasonable cube if it obeys Reasonable Cube Conditions 1 - \ref{paramdisk}.

\subsection{The curvature of $Z$ on reasonable tube segments}

In this subsection, we consider the geometry of $Z$ in reasonable tube segments of tubes $T \in \frak T$ of length $K^{-1} N^\sigma$.  For reasonable segments, we will eventually prove that $T_Q Z$ varies slowly along the segment.  Along the way, we will estimate the behavior of the normal vector and the curvature.

Given an intersecting pair $Q \in X$ and $T \in \frak T$, we define the tube segment $Seg(Q,T)$ as the segment of $T^+$ centered at $Q$ of length $K^{-1} N^\sigma$.  (Recall that $T^+$ is the concentric cylinder around $T$ with radius 100 instead of radius 1.)  

We say that a condition on $Seg(Q,T)$ is reasonable if it holds for a fraction $(1 - K^-)$ of intersecting pairs $(Q, T)$.  Up to a factor of $E$, we know that any two cubes lie in the same number of tubes, and any two tubes contain the same number of cubes.  Therefore, a condition on $Seg(Q,T)$ is reasonable if either of the following holds:

\begin{itemize}

\item For $(1 - K^-) |\frak T|$ tubes $T \in \frak T$, for a fraction $(1 - K^-)$ of the cubes $Q \in X$ that intersect $T$, the condition on $Seg(Q,T)$ holds.

\item For $(1 - K^-) |X|$ cubes $Q \in X$, for a fraction $(1 - K^-)$ of the tubes $T \in \frak T$ that intersect $Q$, the condition on $Seg(Q,T)$ holds.  

\end{itemize}

\begin{reasseg} $\Angle(v(T), T_Q Z) \le K^+ N^{-\sigma}$.
\end{reasseg}

\begin{proof} For $(1 - K^-)|X|$ cubes $Q \in X$, Reasonable Cube Condition \ref{tubeplaneangle} tells us that 

$$\Avg_{T \textrm{ meets } Q} \Angle(v(T), T_Q Z) \le K^+ N^{-\sigma}.$$

So for a fraction $(1 - K^-)$ of all $T$ that intersect $Q$, we have the desired estimate.
\end{proof}

To prove that some estimates hold on almost all intersecting pairs, $(Q,T)$, we will have to do some averaging.  In our hypotheses, we assumed some uniformity conditions on $X$ and $\frak T$, and these make the averages easier to understand.  In particular, the uniformity implies the following simple lemmas.

\begin{lemma} \label{avgbound} The following estimate holds at each point $x$:

$$\mu(x) := \Avg_{Q \in X, T \in \frak T, Q \textrm{ meets } T} \chi_{Seg(Q,T)} (x) \le K^{-1+} N^{-3 + 2 \sigma}. $$

\end{lemma}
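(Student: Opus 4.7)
The plan is a direct double-counting argument that uses only the uniformity conditions in Hypotheses \ref{uniform3trans}, together with the radius-100 fattening in the definition of $Seg(Q,T)$. First I rewrite
$$\mu(x) = \frac{\#\{(Q,T) : Q \in X,\ T \in \frak T,\ Q \text{ meets } T,\ x \in Seg(Q,T)\}}{\#\{(Q,T) : Q \in X,\ T \in \frak T,\ Q \text{ meets } T\}}.$$
The denominator is essentially fixed: by Hypothesis (2) each cube of $X$ meets $\sim \rho$ tubes (up to a factor of $E$), so the total number of intersecting pairs is $\sim |X| \rho = N^{3-\sigma} \rho$, with an error of at most $K^+$.

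Next I bound the numerator by unpacking the constraint $x \in Seg(Q,T)$, which says both that $x \in T^+$ (the concentric radius-100 cylinder) and that $x$ lies within $\tfrac{1}{2} K^{-1} N^\sigma$ of $Q$ along the tube direction. For the first constraint, $x \in T^+$ means the axis of $T$ passes within distance $100$ of $x$. Covering $B(x,100)$ by $O(1)$ unit cubes of $\RR^3$ and applying Hypothesis (3) to each cube in the cover, the number of tubes of $\frak T$ with $x \in T^+$ is at most $O(E\rho)$. For the second constraint, once $T$ is fixed the eligible cubes $Q \in X$ must lie in a sub-segment of $T^+$ of length $O(K^{-1}N^\sigma)$ around $x$. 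That sub-segment is a cylinder of radius $100$ and length $O(K^{-1}N^\sigma)$, hence volume $O(K^{-1}N^\sigma)$; since $X$ is a set of disjoint unit cubes, at most $O(K^{-1} N^\sigma)$ of them fit. Multiplying the two bounds,
$$\#\{(Q,T) : Q \text{ meets } T,\ x \in Seg(Q,T)\} \le K^+ \cdot \rho \cdot K^{-1} N^\sigma.$$

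Dividing the numerator bound by the denominator bound yields
$$\mu(x) \le \frac{K^+ \rho K^{-1} N^\sigma}{K^- N^{3-\sigma}\rho} = K^{-1+} N^{-3+2\sigma},$$
with the polynomial-in-$E$ losses absorbed into the $K^+$ and $K^-$ conventions fixed earlier in the paper. No step here is a real obstacle: the proof is purely combinatorial bookkeeping, and the only mild subtlety is the fattening from $T$ to $T^+$, whose cost is a harmless absolute constant once Hypothesis (3) is applied on a small cover of $B(x,100)$.
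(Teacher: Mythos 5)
Your proof is correct and follows essentially the same route as the paper: bound the number of pairs $(Q,T)$ with $x \in Seg(Q,T)$ by (number of tubes with $x \in T^+$, which is $\le K^+\rho$ via Hypothesis (3) and the radius-$100$ fattening) times (number of cubes of $X$ meeting $T$ within distance $\sim K^{-1}N^\sigma$ of $x$, which is $\le K^{-1+}N^\sigma$ by disjointness), then divide by the $\ge |X|\rho = N^{3-\sigma}\rho$ intersecting pairs. The only difference is that you spell out the covering argument behind the $K^+\rho$ tube count, which the paper states without detail.
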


\begin{proof} The number of intersecting pairs is at least $|X| \rho \ge N^{3 - \sigma} \rho$.  Fix a point $x$.  The number of segments $Seg(Q,T)$ containing the point $x$ is bounded as follows.  The number of tubes $T^+$ containing $x$ is at most $K^+ \rho$.  For each $T$ containing $x$, the number of $Q$ that lie within $K^{-1} N^\sigma$ of $x$ and intersect $T$ is $\le 10 K^{-1} N^\sigma$.  Therefore, for each $x$, the number of segments $Seg(Q,T)$ that contain $x$ is $\le K^{-1+} N^\sigma \rho$.

The density $\mu(x)$ is bounded by the quotient $(K^{-1+} N^\sigma \rho) / (N^{3-\sigma } \rho) = K^{-1+} N^{-3 + 2 \sigma}$.  \end{proof}

As a simple consequence, we can control the area of $Z \cap Seg(Q,T)$ for a reasonable segment.

\begin{reasseg} $\Area Z \cap Seg(Q,T) \le K^{-1+} N^\sigma$. 
\end{reasseg}

\begin{proof} 
$\Avg_{Q \textrm{ meets } T} \Area (Seg(Q,T) \cap Z) = \int_Z \mu. $

By the last lemma, we have $\int_Z \mu \le (\Area Z) K^{-1+} N^{-(3 - 2 \sigma)} \le K^{+} N^{3 - \sigma} K^{-1+} N^{-(3 - 2 \sigma)} = K^{-1+} N^\sigma$.  

\end{proof}

Next we study how closely $Z$ is tangent to $v(T)$ along a reasonable segment $Seg(Q,T)$.

\begin{reasseg} \label{segtangest} $ \int_{Seg(Q,T) \cap Z} |v(T) \cdot N| \le K^{-1+}$.
\end{reasseg}

\begin{proof} For a tube $T \in \frak T$, Lemma \ref{cylinderestimate} says that

$$\int_{Z \cap T^+} | v(T) \cdot N | \le C \Deg P \lesssim K^+ N^{1 - \sigma}.$$

We consider the $\ge N$ segments $Seg(Q,T)$ where $Q$ intersects $T$.  No point lies in more than $K^{-1+} N^\sigma$ of these segments.  Therefore, for every $T \in \frak T$,

$$ \Avg_{Q \textrm{ meets } T} \int_{Z \cap Seg(Q,T)} |v(T) \cdot N| \le K^{-1+} N^{\sigma - 1} \int_{Z \cap T} |v(T) \cdot N| \le K^{-1+}. $$

So for a fraction $(1 - K^-)$ of the cubes $Q \in X$ that meet $T$, the desired estimate holds.
\end{proof}

Next we show that a reasonable tube segment contains many reasonable cubes.

\begin{reasseg} \label{manyreascubes} $Seg(Q,T)$ contains $\ge K^{-1-} N^{\sigma}$ reasonable cubes $Q'$ on each side of $Q$.
\end{reasseg}

\begin{proof}  Fix $T \in \frak T$.  Orient the tube $T$ so that one direction is `left' and the other direction is `right'.  Let $L(Q)$ be the portion of $Seg(Q,T)$ to the left of $Q$, and let $R(Q)$ be the portion of $Seg(Q,T)$ to the right of $Q$.  

Consider the set $X_{bad, left}(T)$ consisting of reasonable cubes $Q$ in $X$ so that $Q$ intersects $T$ and $L(Q)$ contains $\le K^{-1-c} N^{\sigma}$ reasonable cubes $Q'$ for a constant $c> 0$ that we'll choose below.  Consider the segments $L(Q)$ with $Q \in X_{bad, left}$.  By a Vitali-covering type argument, we can find a disjoint subset of these segments whose union contains at least a third as many reasonable cubes $Q'$ as the union of all these segments.  The segments have length $K^{-1} N^\sigma$, and they all lie in a ball of radius $K^+ N$, so the number of segments is at most $K^{1+} N^{1 - \sigma}$.  Each of these bad segments contains $\le K^{-1-c} N^{\sigma}$ reasonable cubes $Q$.  Therefore, the total number of $Q'$ lying in any bad segment $L(Q)$ is $\le K^{-c+} N$.  In particular, the number of $Q \in X_{bad, left}(T) \le K^{-c+} N$.  

Similarly, consider the set $X_{bad, right}(T)$ consisting of reasonable cubes $Q \in X$ so that $Q$ intersects $T$ and $R(Q)$ contains $\le K^{-1-c} N^{\sigma}$ reasonable cubes $Q'$.  By the same argument, $|X_{bad, right}(T)| \le K^{-c+} N$.

We let $X_{bad}(T)$ be the union of $X_{bad, left}(T)$ and $X_{bad, right}(T)$.  For each $T$, $|X_{bad}(T)| \le K^{-c+} N$.  Now we choose $c$ so that for each $T \in \frak T$, $|X_{bad}(T)| \le K^{-} N$.  

For $(1 - K^-) |\frak T|$ tubes $T \in \frak T$, a fraction $(1 - K^-)$ of the cubes $Q$ that meet $\frak T$ are reasonable.  At most $K^-$ of these cubes are in $X_{bad}$, and the remaining cubes satisify this Tube Segment Condition.

\end{proof}

We will pay particular attention to the two ends of the segment.  For each segment $Seg(Q,T)$, we choose two reasonable cubes $Q_1, Q_2$ near opposite ends of the segment.  By Reasonable Tube Segment Condition \ref{manyreascubes}, we know that $Seg(Q,T)$ contains $\ge K^{-1-} N^{\sigma}$ reasonable cubes on each side of $Q$, and so $\Dist(Q_i, Q) \ge K^{-1-} N^\sigma$.  Now by Reasonable Cube Condition \ref{plany}, we know that for each reasonable cube $Q'$, $\Avg_{T' \textrm{ intersects } (Q')^+ } \int_{(Q')^+ \cap Z} |v(T') \cdot N| \le K^{+} N^{-\sigma}$.  Now for a fraction $(1 - K^-)$ of pairs $(Q,T)$, we can choose reasonable $Q_1, Q_2$ to get the following estimate.

\begin{reasseg} \label{boundarycubes0} In each reasonable tube segment $Seg(Q,T)$, there are reasonable cubes $Q_1, Q_2$ on either side of $Q$, with $\Dist(Q_i, Q) \ge K^{-1-} N^\sigma$, obeying the integral estimate $\int_{Q_i^+ \cap Z} |v(T) \cdot N| \le K^+ N^{-\sigma}$.
\end{reasseg}

Our main goal in this Subsection is to study the geometry and regularity of $Z \cap Seg(Q,T)$.  First we study how the tangent plane of $Z$ varies, and then we study how the second fundamental form of $Z$ varies.  We are trying to prove that on each connected component of $Z \cap Seg(Q,T)$, the tangent plane and the second fundamental form of $Z$ are mostly close to constant.  

We begin with the tangent plane.
Let $w$ be a unit vector in $\RR^3$.  Recall the set $Tan(w) := \{ x \in Z | \nabla P(x) \cdot w = 0 \}$.  This is the set of points $x \in Z$ where $w \in T_x Z$.  Lemma \ref{tan(w)est} says that for each $w \in S^2$, the set $Tan(w) \subset Z$ is a curve of length $\le K^+ N^{3-2 \sigma}$.

We let $W$ denote a $K^{-1/4}$-net of points in $S^2$, with $|W| \sim K^{1/2}$.  We let $Tan(W) := \cup_{w \in W} Tan(w)$. The total length of $Tan(W)$ is still $\le K^{(1/2)+} N^{3 - 2 \sigma}$.  Next we consider the length of the intersection of this set with an average tube segment $Seg(Q,T)$.

We prove a general lemma about the average length of the intersection of a tube segment and a curve of length $L$.

\begin{lemma} \label{avglength} Let $\gamma \subset \RR^3$ be a curve of length $L = N^{3-2 \sigma} L'$.  Then

$$ \Avg_{Q \in X , T \in \frak T, Q \textrm{ meets } T} \Length Seg(Q,T) \cap \gamma \le K^{-1+} L'. $$

\end{lemma}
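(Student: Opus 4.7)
The plan is to prove this by a direct Fubini/double-counting argument, reducing immediately to the pointwise density estimate in Lemma \ref{avgbound}.

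First I would rewrite the length of the intersection as an integral over $\gamma$:
$$ \Length(Seg(Q,T) \cap \gamma) = \int_\gamma \chi_{Seg(Q,T)}(x)\, ds(x). $$
Swapping the average over pairs $(Q,T)$ with this integral (both are finite positive quantities, so Fubini applies without fuss) gives
$$ \Avg_{(Q,T)} \Length(Seg(Q,T) \cap \gamma) = \int_\gamma \mu(x)\, ds(x), $$
where $\mu(x) = \Avg_{(Q,T),\ Q\text{ meets }T} \chi_{Seg(Q,T)}(x)$ is exactly the density function from Lemma \ref{avgbound}.

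Next I would apply the pointwise bound $\mu(x) \le K^{-1+} N^{-3+2\sigma}$ (Lemma \ref{avgbound}) inside the integral, giving
$$ \int_\gamma \mu(x)\, ds(x) \;\le\; K^{-1+} N^{-3+2\sigma} \cdot \Length(\gamma) \;=\; K^{-1+} N^{-3+2\sigma} \cdot N^{3-2\sigma} L' \;=\; K^{-1+} L', $$
which is the desired estimate.

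There is essentially no obstacle here: all the geometric/combinatorial content has already been packaged into Lemma \ref{avgbound} (which came from the uniformity hypotheses bounding the number of tube segments through a given point). The only thing to be careful about is that the averaging measure on $(Q,T)$ used to define $\mu$ is the same as the one used in the statement of the lemma, and that Fubini is applied correctly; both are routine.
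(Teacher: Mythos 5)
Your proposal is correct and matches the paper's argument exactly: the paper likewise writes the average as $\int_{\gamma} \mu$ and applies the pointwise bound $\mu \le K^{-1+} N^{-3+2\sigma}$ from Lemma \ref{avgbound}. You have simply spelled out the Fubini step that the paper leaves implicit.
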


\begin{proof} The left-hand side is $\int_{\gamma} \mu$.  By Lemma \ref{avgbound}, this is $\le \Length(\gamma) K^{-1+} N^{-3 + 2 \sigma} = K^{-1+} L'$.  \end{proof}

Combining this lemma with our estimate that the length of $Tan(W)$ is $\le K^{(1/2)+} N^{3 - 2 \sigma}$, we get:

\begin{reasseg} The length of $Seg(Q,T) \cap Tan(W) \le K^{-(1/2)+}$.
\end{reasseg}

This length is much smaller than 1.  For comparison, $Z \cap Q$ has area $\ge 1$ for each reasonable cube $Q$.  It's a white lie to imagine that $Seg(Q,T) \cap Tan(W)$ is empty.  This stronger assumption would constrain how the tangent plane varies along a connected component of $Z \cap Seg(Q,T)$.  It would imply that the normal vector $N(x)$ is never perpendicular to any $w \in W$.  The set of points perpendicular to a fixed $w$ is a great circle, and the union over all $w \in W$ cuts the sphere $S^2$ into cells of diameter $\le K^{-(1/4)+}$.  Therefore, if $Seg(Q,T) \cap Tan (W)$ were empty, then on each connected component of $Z \cap Seg(Q,T)$, the tangent plane $T_x Z$ could vary by an angle at most $K^{-(1/4)+}$. 

Our next estimates have to do with the second fundamental form of $Z$.  Recall that for a smooth surface $Z \subset \RR^3$ with unit normal vector $N$, if $v,w \in T_x Z$, then the second fundamental form $A(v,w)$ is defined as

$$ A(v,w) := \nabla_v N(x) \cdot w. $$

\noindent In our case, we can take $N = |\nabla P|^{-1} \nabla P$.  If we want to highlight the point $x \in Z$, we refer to the second fundamental form at $x$ as $A_x$.  The second fundamental form encodes $\nabla N$, which tells us how $N(x)$ changes as $x$ moves along $Z$.

We will study several features of the second fundamental form: the Gauss curvature, the directions where the second fundamental form vanishes, the norm of the second fundamental form, etc.

The determinant of $A_x$ is the Gauss curvature of $Z$.  In other words, if $v_1, v_2$ is an orthonormal basis of $T_x Z$, then the Gauss curvature is the determinant of the matrix $A_x (v_i, v_j)$, $i,j = 1,2$.  A point is called Gauss flat if its Gauss curvature is zero.  We let $GFl \subset Z$ be the set of Gauss flat points.  

\begin{lemma} For a generic $P$, the set of Gauss flat points of $Z$ is contained in a curve of length $\le K^+ N^{3 - 2 \sigma}$.

\end{lemma}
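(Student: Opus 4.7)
The plan is to identify the Gauss flat locus with the intersection of $Z = Z(P)$ and the zero set of an auxiliary polynomial $Q$ whose degree is comparable to $\Deg P$, and then to invoke the Crofton-type bound of Theorem \ref{algvol}.

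For any smooth implicit surface $\{P = 0\}$ in $\RR^3$ with nonvanishing gradient, the Gauss curvature is, up to sign, the ratio $Q / |\nabla P|^4$, where $Q$ is the $4 \times 4$ bordered-Hessian determinant
\[
Q(x) := \det \begin{pmatrix}
P_{x_1 x_1} & P_{x_1 x_2} & P_{x_1 x_3} & P_{x_1} \\
P_{x_1 x_2} & P_{x_2 x_2} & P_{x_2 x_3} & P_{x_2} \\
P_{x_1 x_3} & P_{x_2 x_3} & P_{x_3 x_3} & P_{x_3} \\
P_{x_1} & P_{x_2} & P_{x_3} & 0
\end{pmatrix}(x).
\]
Expanding the determinant as a sum over permutations $\sigma \in S_4$, every nonzero contribution must have $\sigma(4) \ne 4$, hence uses exactly two first-order partials of $P$ (each of degree $\Deg P - 1$) and two second-order partials of $P$ (each of degree $\Deg P - 2$). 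Thus $Q$ is a polynomial of degree at most $4 \Deg P - 6 \le K^+ N^{1-\sigma}$. Since $\nabla P$ is nonvanishing on $Z$ by the genericity already set up at the start of the section, the Gauss flat locus is exactly $GFl = Z \cap Z(Q)$.

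The second step is to rule out the possibility that $Q$ vanishes identically on $Z$. If it did, then since $P$ is irreducible we would have $P \mid Q$ in $\Poly_{4 \Deg P - 6}(\RR^3)$; the set of $P$ for which this happens is a Zariski closed subset of the coefficient space, because the entries of $Q$ are polynomials in the coefficients of $P$. It is a \emph{proper} closed subset: for example, a generic perturbation of a fixed sphere equation to degree $\Deg P$ has at least one point of nonzero Gauss curvature, so $P \nmid Q$ there. Therefore, by further shrinking the tiny ball in polynomial space around $P_0$ from which $P$ is already being drawn, we may assume that $Q \not\equiv 0$ on $Z$.

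Under this genericity, $GFl \subseteq Z(P) \cap Z(Q)$ is a proper algebraic subvariety of the irreducible surface $Z(P)$, and therefore a $1$-dimensional algebraic variety of degree at most $\Deg P \cdot \Deg Q \le 4 (\Deg P)^2 \le K^+ N^{2 - 2\sigma}$. Applying Theorem \ref{algvol} with $k = 1$ to this curve inside the ambient ball $B^3(K^+ N)$ yields
\[
\Length(GFl) \le C K^+ N \cdot (\Deg P)^2 \le K^+ N^{3 - 2\sigma},
\]
as desired. The only real obstacle in this plan is the genericity step: one needs the locus of polynomials whose zero set is entirely Gauss flat (i.e.\ developable) to be a proper Zariski closed subset of the coefficient space, which is standard once a single non-developable example of the same degree is exhibited.
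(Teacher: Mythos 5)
Your proof is correct and follows essentially the same route as the paper: express Gauss-flatness as the vanishing on $Z$ of polynomial data of degree $O(\Deg P)$, use genericity (plus irreducibility of $P$) to conclude the flat locus is a proper subvariety, hence an algebraic curve of degree $\lesssim (\Deg P)^2$, and then bound its length by the Crofton estimate in $B(K^+ N)$. The only real difference is cosmetic: you encode the condition with the single bordered-Hessian determinant (degree $\le 4\Deg P - 6$), while the paper uses the $2 \times 2$ minors of the matrix $(\nabla_{\nabla P \times e_i} \nabla P) \cdot (\nabla P \times e_j)$ (degree $\le 6 \Deg P$); both yield the same $K^+ N^{3-2\sigma}$ bound.
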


\begin{proof} We have to check that the set of Gauss flat points is described by some polynomials vanishing.  We notice that $\nabla P \times e_i$, $i=1,2,3$ spans $TZ$ at each point of $Z$.  

Next, we notice that for $v, w \in T_x Z$, $\nabla_v (\nabla P) \cdot w = \nabla_v (|\nabla P| N) \cdot w$.  Because
$w \cdot N = 0$, this is $|\nabla P| \nabla_v N \cdot w = |\nabla P| A(v,w)$.  We record this as an equation:

$$ \nabla_v (\nabla P) \cdot w = |\nabla P| A(v,w). \eqno{(1)}$$

The Gauss curvature vanishes if and only if every $2 \times 2$ minor determinant of the matrix $A( \nabla P \times e_i, \nabla P \times e_j)$ vanishes, if and only if every $2 \times 2$ minor determinant of the following matrix vanishes: 

$$(\nabla_{\nabla P \times e_i} \nabla P) \cdot (\nabla P \times e_j).$$ 

These minor determinants are a finite list of polynomials of degree $\le 6 \Deg P$.

Since $P$ is generic, it is not Gaussian flat everywhere, and so the Gaussian flat points are contained in an algebraic curve of degree $\le 10 (\Deg P)^2 \le K^+ N^{2 - 2 \sigma}$.  Therefore the length of the Gaussian flat points is bounded by $K^+ N^{3 - 2 \sigma}$ as desired.
\end{proof}

Combining this length bound with Lemma \ref{avglength}, we get:

\begin{reasseg} The length of  $Gfl \cap Seg(Q,T)$ is $\le K^{-1+}$.
\end{reasseg}

Since $K^{-1+}$ is very small, this almost shows that the sign of the Gauss curvature is constant on connected components of $Z \cap Seg(Q,T)$.

A unit vector $v \in T_x Z$ is called straight if $A_x(v,v) = 0$.  The straight directions play an important role in the incidence geometry of lines and also in our story.  If $x$ has positive Gauss curvature, there are no straight directions.  If $x$ has negative Gauss curvature, there are exactly two straight directions.  If $x$ has zero Gauss curvature, there can be either one straight direction or else all directions may be straight if $A_x = 0$.  
We next want to control how the straight directions spin around as we vary $x$.

For a unit vector $w$, let $Str(w)$ be the set of $x \in Z$ so that there is a straight unit vector $v \in T_x Z$ with $v \cdot w = 0$. 

\begin{lemma} For generic $w$, $Str(w)$ is contained in a curve of length $\le K^+ N^{3 - 2 \sigma}$. 
\end{lemma}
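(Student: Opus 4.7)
The plan is to parallel the Gauss-flat lemma: realize $Str(w)$ as cut out of $Z$ by the vanishing of a single additional polynomial of degree $\lesssim \Deg P$, and then invoke Bezout together with the Crofton estimate Theorem \ref{algvol}.

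First I would identify the candidate straight direction algebraically. For $w\in S^2$ and any smooth $x\in Z$ whose normal $N(x)$ is not parallel to $w$, the line $T_xZ\cap w^\perp$ is $1$-dimensional and is spanned by
\[
\tilde v(x,w)\;:=\;\nabla P(x)\times w,
\]
since $\tilde v$ is perpendicular to both $\nabla P(x)$ (hence tangent to $Z$) and $w$. Thus $x\in Str(w)$ exactly when $A_x(\tilde v,\tilde v)=0$. Applying identity (1) from the proof of the Gauss-flat lemma, this is equivalent to the vanishing of
\[
Q_w(x)\;:=\;\nabla_{\tilde v(x,w)}(\nabla P)(x)\cdot\tilde v(x,w).
\]
A direct count of degrees shows that the components of $\tilde v$ are polynomials in $x$ of degree $\le \Deg P-1$ while the entries of the Hessian $\nabla(\nabla P)$ have degree $\le \Deg P-2$, so $\Deg Q_w\le 3\Deg P-4\le 3\Deg P$.

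Next I would verify that $Q_w$ does not vanish identically on $Z$ for generic $w$. The coefficients of $Q_w$, viewed as a polynomial in $x$, are polynomials in $w$, so the set of bad $w$ is an algebraic subset of $S^2$. If it were all of $S^2$, then at every smooth $x\in Z$ every direction in $T_xZ$ would be straight (the direction $\tilde v(x,w)$ sweeps out $T_xZ$ up to scalar as $w$ varies over $S^2$), forcing $A_x\equiv 0$ on $Z$; since $Z$ is irreducible, this would make $Z$ a plane, contradicting the genericity of $P$. Therefore the bad set is a proper algebraic subset of $S^2$ and in particular has measure zero, and for any $w$ in its complement $Q_w|_Z\not\equiv 0$.

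The rest then matches the Gauss-flat proof verbatim. For such a generic $w$, $Str(w)$ is contained in $\{P=0\}\cap\{Q_w=0\}$, which by Bezout is an algebraic curve of degree $\le 3(\Deg P)^2\le K^+N^{2-2\sigma}$. Applying Theorem \ref{algvol} with $k=1$ inside the ball of radius $K^+N$ containing $X$ bounds its length by $K^+N\cdot N^{2-2\sigma}=K^+N^{3-2\sigma}$, as required. The only step demanding care is the genericity argument of the middle paragraph; once it is in hand the remaining Bezout-plus-Crofton estimate is the familiar pattern used throughout Section 4.
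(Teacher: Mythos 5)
Your proposal is correct and follows essentially the same route as the paper: identify the candidate straight direction with $\nabla P \times w$, observe that $Str(w)$ is contained in $Z \cap \{ (\nabla_{\nabla P \times w}\nabla P)\cdot(\nabla P \times w) = 0 \}$, a polynomial condition of degree $\le 3\Deg P$, and conclude by Bezout together with the Crofton length bound. The only (harmless) difference is the genericity step: the paper picks a single point with finitely many straight directions and notes that a generic $w$ is perpendicular to none of them, whereas you argue that the bad set of $w$ is a proper algebraic subset of $S^2$ since otherwise $A \equiv 0$ would force $Z$ to be a plane; both yield the same conclusion.
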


\begin{proof} Suppose $x \in Str(w)$.  We know there is a straight unit vector $v \in T_x Z$ with $v \cdot w = 0$.  Since $v \in T_x Z$, $v \cdot \nabla P(x) = 0$.  Therefore, $v$ is proportional to $\nabla P \times w$.  
Hence a point $x \in Z$ lies in $Str(w)$ if and only if $A(\nabla P \times w, \nabla P \times w) = 0$.  Using equation $(1)$ above, this is equivalent to

$$(\nabla_{\nabla P \times w} \nabla P) \cdot (\nabla P \times w) = 0. $$

This is a polynomial of degree $\le 3 \Deg P \lesssim K^+ N^{1 - \sigma}$.

For generic $w$, not every point lies in $Str(w)$.  This follows because $Z(P)$ is not a plane, and so we can find a point $x$ with only finitely many straight directions, and a generic $w$ is not perpendicular to any of them.  Therefore, $Str(w)$ is an algebraic curve of degree $\le K^+ N^{2 - 2 \sigma}$ and length $\le K^+ N^{3 - 2 \sigma}$.
\end{proof}

Recall that $W$ is a $K^{-1/4}$-net of points in $S^2$ consisting of $K^{1/2}$ points.  We can choose $W$ generically so that the last lemma applies for each $w \in W$.  We let $Str(W) := \cup_{w \in W} Str(w)$.  The length of $Str(W)$ is still $\le K^{(1/2)+} N^{3 - 2 \sigma}$.

\begin{reasseg} The length of $Seg(Q,T) \cap Str(W) \le K^{-(1/2)+}$.
\end{reasseg}

As a white lie, suppose that $Seg(Q,T) \cap Str(W)$ and $Seg(Q,T) \cap GFl$ were both empty.  If $Seg(Q,T) \cap GFl$ is empty, then the sign of the Gauss curvature is constant on each component of $Z \cap Seg(Q,T)$.  Consider a component of $Z \cap Seg(Q,T)$ where the Gauss curvature is negative.  At each point there are two straight directions.  None of the straight directions is ever perpendicular to a point $w \in W$, and so the straight directions can only move by $\le K^{-1/4}$.

If the Gauss curvature of $A_x$ is positive, then there are no straight directions.  In this case, it's helpful to consider the eigenvectors of $A_x$.  For a non-zero vector $w$, let $Eig(w)$ be the set of $x \in Z$ so that there is a unit vector $v \in T_x Z$, with $v$ an eigenvector of $A_x$ and $v \cdot w = 0$. 

\begin{lemma} For a generic $w \in S^2$, $Eig(w)$ has length $\le K^+ N^{3 - 2 \sigma}$.  
\end{lemma}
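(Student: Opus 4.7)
The plan is to follow exactly the template used above for $Tan(w)$, $GFl$, and $Str(w)$. I will find a polynomial $F_w$ of degree $\lesssim \Deg P$ such that, for generic $x \in Z$, we have $x \in Eig(w)$ iff $F_w(x) = 0$.  For generic $w$, this $F_w$ will not vanish identically on $Z$, so $Eig(w)$ will lie in the algebraic curve $Z \cap \{F_w = 0\}$ of degree $\le (\Deg P)(\deg F_w) \le K^+ N^{2 - 2 \sigma}$, and the desired length bound $\le K^+ N^{3 - 2\sigma}$ will follow from Theorem \ref{algvol}.

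To construct $F_w$: at $x \in Z$ with $w$ not parallel to $N(x)$, the line $T_x Z \cap w^\perp$ is spanned by $v := \nabla P(x) \times w$, so $x \in Eig(w)$ iff $v$ is an eigenvector of the symmetric form $A_x$ on the two-dimensional space $T_x Z$.  Such $v$ is an eigenvector iff $A_x(v, v') = 0$ for any $v' \in T_x Z$ orthogonal to $v$, and a natural choice is $v' := \nabla P \times v = \nabla P \times (\nabla P \times w)$.  Using equation $(1)$ from the proof of the Gauss-flat lemma, $A_x(v, v') = 0$ is equivalent to $\nabla_v(\nabla P) \cdot v' = 0$, i.e.,
$$ F_w(x) := \bigl[\nabla_{\nabla P \times w}(\nabla P)\bigr] \cdot \bigl[\nabla P \times (\nabla P \times w)\bigr] = 0. $$
A direct degree count (Hessian of $P$ has degree $\Deg P - 2$ and $\nabla P$ has degree $\Deg P - 1$) gives $\deg F_w \le 4 \Deg P$.

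The remaining step is to verify that for generic $w \in S^2$, $F_w$ does not vanish identically on $Z$.  Since we chose $P$ generically, $Z$ is not totally umbilic; in particular we can pick $x_0 \in Z$ at which $A_{x_0}$ has two distinct eigenvalues.  Then $v = \nabla P(x_0) \times w$ is an eigenvector of $A_{x_0}$ only when $w$ is perpendicular to one of the two principal directions at $x_0$, which is a $\mu$-measure-zero subset of $S^2$.  For all other $w$, $F_w(x_0) \neq 0$, so $Z \cap \{F_w = 0\}$ is an algebraic curve of degree $\le 4(\Deg P)^2 \le K^+ N^{2-2\sigma}$ and hence of length $\le K^+ N^{3 - 2\sigma}$ by Theorem \ref{algvol}, completing the proof.

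The main obstacle I expect is phrasing the genericity step cleanly: I need to know that the generic perturbation of $P$ used earlier to make $Z$ smooth and irreducible also guarantees the existence of a non-umbilic point on $Z$.  This is essentially automatic, because totally umbilic surfaces in $\RR^3$ are only planes and spheres, and these form a subfamily of very large codimension inside $\Poly_D(\RR^3)$ for $D \gg 1$; the generic choice of $P$ thus avoids them.
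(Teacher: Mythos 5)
Your proposal is correct and takes essentially the same route as the paper: you arrive at the identical polynomial condition $(\nabla_{\nabla P \times w}\nabla P)\cdot(\nabla P \times(\nabla P \times w)) = 0$, of degree $\le 4 \Deg P$, characterizing $Eig(w)$, and then bound the length of the resulting algebraic curve of degree $\le K^+ N^{2-2\sigma}$ by the Crofton estimate of Theorem \ref{algvol}. The only difference is that you make explicit the genericity step (that $F_w$ does not vanish identically on $Z$, via a non-umbilic point of the generic $Z$), which the paper leaves implicit for $Eig(w)$ while giving the analogous argument for $Str(w)$; your verification of that step is sound.
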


\begin{proof} We begin with an algebraic description of when a non-zero vector is an eigenvector for $A_x$.

\begin{lemma} A non-zero vector $v \in T_x Z$ is an eigenvector for $A_x$ if and only if

$$ (\nabla_v \nabla P) \cdot (\nabla P \times v) = 0. $$

\end{lemma}

\begin{proof} Recall that $A_x$ is symmetric: $A_x(v,w) = A_x (w,v)$.  Therefore, a non-zero vector $v \in T_x Z$ is an eigenvector of $A_x$ if and only if $ A_x (v, u) = 0$ for all $u \in T_x Z$ with 
$u \cdot v = 0. $  The possible $u$ are all multiples of $\nabla P \times v$.  Therefore, $v$ is an eigenvector if and only if $A_x (v, \nabla P \times v) = 0$.  Recalling equation (1) above, this is equivalent to $ (\nabla_v \nabla P) \cdot (\nabla P \times v) = 0. $ \end{proof}

A point $x \in Z$ lies in $Eig(w)$ if and only if $\nabla P \times w$ is an eigenvector of $A_x$ if and only if

$$ (\nabla_{\nabla P \times w} \nabla P) \cdot (\nabla P \times (\nabla P \times w)) = 0. $$

This is a polynomial of degree $\le 4 \Deg P \le K^+ N^{1 - \sigma}$.  So $Eig(w)$ lies in an algebraic curve of degree $\le K^+ N^{2 - 2\sigma}$ and has length $\le K^+ N^{3 - 2 \sigma}$. 
\end{proof}

Recall that $W$ is a $K^{-1/4}$-net of points in $S^2$ consisting of $K^{1/2}$ points.  We can choose $W$ generically so that the last lemma applies for each $w \in W$.  We let $Eig(W) := \cup_{w \in W} Eig(w)$.  The length of $Eig(W)$ is still $\le K^{(1/2)+} N^{3 - 2 \sigma}$.

\begin{reasseg} The length of $Seg(Q,T) \cap Eig(W) \le K^{-(1/2)+}$.
\end{reasseg}

Finally, we prove similar results for the norm of the second fundamental form.
Recall that the norm of the second fundamental form $A$ is defined as follows.  Let $v_1, v_2$ be an orthonormal basis of $T_x Z$.  Then

$$ | A_x |^2 := \sum_{i,j=1}^2 | A_x (v_i, v_j) |^2. $$

\begin{lemma} \label{normformula} $ |A_x|^2 =  \sum_{i,j = 1}^3 |A_x (N \times e_i, N \times e_j)|^2$.  
\end{lemma}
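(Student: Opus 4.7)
The plan is to reduce the right-hand side to the Frobenius norm of $A_x$ by exploiting the fact that cross product with the unit normal $N$ acts as a $90^\circ$ rotation on $T_x Z$. First I observe that $N \times e_i \in T_x Z$ (since $(N \times e_i)\cdot N = 0$), so the right-hand side is well-defined. Writing $\pi\colon \RR^3 \to T_x Z$ for orthogonal projection and setting $u_i := \pi(e_i)$, the identity $N \times e_i = N \times u_i$ holds because $N \times N = 0$, and $R\colon T_x Z \to T_x Z$ defined by $R(w) := N \times w$ is a $90^\circ$ rotation. Hence $A_x(N \times e_i, N \times e_j) = B(u_i, u_j)$, where $B$ is the symmetric bilinear form on $T_x Z$ defined by $B(u,w) := A_x(Ru, Rw)$. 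Since $R$ is orthogonal, $B$ is $A_x$ conjugated by an orthogonal transformation, so $|B|^2 = |A_x|^2$.

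The remaining step is to show $\sum_{i,j=1}^3 B(u_i, u_j)^2 = |B|^2$, despite the fact that $u_1, u_2, u_3 \in T_x Z$ are neither orthonormal nor even linearly independent. The key identity is the partition-of-identity relation $\sum_{i=1}^3 u_i u_i^T = I_{T_x Z}$: in any orthonormal basis $v_1, v_2$ of $T_x Z$, the $(k,l)$-entry of the sum is $\sum_i (e_i \cdot v_k)(e_i \cdot v_l) = v_k \cdot v_l = \delta_{kl}$ because $\{e_i\}$ is an orthonormal basis of $\RR^3$. Combining this with the symmetry of $B$, one computes $\sum_{i,j}(u_i^T B u_j)^2 = \sum_i u_i^T B \left(\sum_j u_j u_j^T\right) B u_i = \sum_i u_i^T B^2 u_i = \mathrm{tr}(B^2) = |B|^2$, which completes the argument.

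There is no serious obstacle here; the computation is essentially bookkeeping. The one subtlety worth flagging is that the partition-of-identity relation depends crucially on $e_1, e_2, e_3$ being an orthonormal basis of the ambient $\RR^3$, which is what allows the overcounted sum on the right-hand side (nine terms rather than four) to collapse to the Frobenius norm of the intrinsic $2 \times 2$ form $A_x$.
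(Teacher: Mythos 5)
Your proof is correct, and it finishes differently from the paper even though both arguments open with the same observations (that $N \times e_i \in T_x Z$ and that $w \mapsto N \times w$ is a rotation of $T_x Z$). The paper defines $B(v,w) := A_x(N \times v, N \times w)$ as a symmetric bilinear form on all of $\RR^3$, notes that the nine-term sum is exactly $|B|^2$ computed in the standard basis, and then recomputes $|B|^2$ in an adapted orthonormal basis $v_1, v_2, v_3 = N$: the terms involving $N$ die because $N \times N = 0$, and the surviving $2 \times 2$ block is $A_x$ evaluated on the orthonormal basis $N \times v_1, N \times v_2$ of $T_x Z$, so $|B|^2 = |A_x|^2$ by the basis-independence of the Frobenius norm of a form on $\RR^3$. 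You instead stay intrinsic to the tangent plane: you replace $e_i$ by $u_i = \pi(e_i)$ (legitimate since $N \times e_i = N \times u_i$), identify the summands with values of the $2 \times 2$ form $B(u,w) = A_x(Ru, Rw)$ with $|B| = |A_x|$, and collapse the overcounted sum via the tight-frame identity $\sum_{i=1}^3 u_i u_i^T = I_{T_x Z}$ together with $\sum_{i,j}(u_i^T B u_j)^2 = \mathrm{tr}(B^2) = |B|^2$, where the symmetry of $B$ justifies writing $(u_i^T B u_j)^2 = u_i^T B u_j\, u_j^T B u_i$ and $u_i \cdot v = e_i \cdot v$ for $v \in T_x Z$ justifies the frame computation. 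The paper's route avoids the frame/trace computation by passing to a three-dimensional form and invoking a single standard invariance fact; your route keeps everything two-dimensional and makes explicit where the orthonormality of $e_1, e_2, e_3$ enters, which is exactly the subtlety you flagged. Both are complete; every step of yours checks out.
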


\begin{proof} We begin by recalling some basic facts about the norm of a bilinear form.  If $B$ is a symmetric bilinear form on a finite-dimensional vector space $V$ with a Euclidean norm, then we define $|B|^2 := \sum_{i,j} |B(v_i, v_j)|^2$, where $v_i$ is an orthonormal basis of $V$.  It's a standard fact that this sum is independent of the choice of orthonormal basis.  To see this, suppose that $J: V \rightarrow V$ is an orthogonal transformation.  The matrix $B(J v_i, J v_j)$ is given by conjugating the matrix $B(v_i, v_j)$ by an orthogonal transformation, and this preserves the sum of the squares of the entries.

Now define a symmetric bilinear form $B$ on $\RR^3$ by

$$ B(v,w) := A_x (N \times v, N \times w). $$

On the one hand, $|B|^2 = \sum_{i,j =1}^3 |B(e_i, e_j)|^2 = \sum_{i,j = 1}^3 |A_x (N \times e_i, N \times e_j)|^2$.

On the other hand, we claim that $|B|^2 = |A_x|^2$.  To see this, choose an orthonormal basis $v_1, v_2, v_3$ for $\RR^3$ where $v_3 = N$, and $v_1, v_2 \in T_x Z$.  In this case, $N \times v_3$ vanishes, so $B(v_i, v_j) = 0$ if $i$ or $j$ is 3.  Hence

$$ |B|^2 = \sum_{i,j=1}^2 |B(v_i, v_j)|^2 = \sum_{i,j = 1}^2 |A_x (N \times v_i, N \times v_j)|^2. $$

But $N \times v_1, N \times v_2$ are an orthonormal basis of $T_x Z$, so this last expression is $|A_x|^2$. 

\end{proof}

\begin{lemma} For any generic number $H > 0$, the set $A(H) := \{ x \in Z \textrm{ such that } |A_x| = H \}$ lies in an algebraic curve of degree $\le 6 (\Deg P)^2 \le K^+ N^{2 - 2 \sigma}$, and so it has length $\le K^+ N^{3 - 2 \sigma}$.
\end{lemma}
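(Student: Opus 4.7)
The plan is to mimic the algebraic-curve arguments used earlier in this subsection (for $\mathrm{GFl}$, $Str(w)$, and $Eig(w)$) but now applied to the squared norm of $A$. Using Lemma \ref{normformula}, we have
$$|A_x|^2 \;=\; \sum_{i,j=1}^3 \bigl|A_x(N\times e_i,\, N\times e_j)\bigr|^2.$$
Since $N = |\nabla P|^{-1}\nabla P$, the vectors $N\times e_i = |\nabla P|^{-1}(\nabla P \times e_i)$ lie in $T_x Z$, and applying equation $(1)$ of the subsection twice gives
$$A_x(N\times e_i,\, N\times e_j) \;=\; |\nabla P|^{-3}\bigl(\nabla_{\nabla P\times e_i}\nabla P\bigr)\cdot(\nabla P\times e_j).$$
Therefore there is a polynomial
$$Q_A(x) \;:=\; \sum_{i,j=1}^3 \Bigl|\bigl(\nabla_{\nabla P\times e_i}\nabla P\bigr)\cdot(\nabla P\times e_j)\Bigr|^2$$
with $|\nabla P|^6 |A_x|^2 = Q_A(x)$ pointwise on $Z$.

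Next I would compute degrees. Each entry of $\nabla P\times e_i$ has degree $\le \Deg P - 1$, each entry of $\nabla_{\nabla P\times e_i}\nabla P$ has degree $\le 2\Deg P - 3$, so each summand in $Q_A$ has degree $\le 2(3\Deg P - 4) = 6\Deg P - 8$, while $|\nabla P|^6$ has degree $\le 6\Deg P - 6$. Consequently $A(H)$ is the zero set on $Z$ of the polynomial
$$R_H(x) \;:=\; Q_A(x) - H^2\, |\nabla P(x)|^6,$$
which has degree $\le 6\Deg P - 6$. For generic $H$ the vanishing locus $\{R_H=0\}$ does not contain all of $Z$: the rational function $|A_x|^2$ is non-constant on $Z$ for a generic $P$ (we already arranged that $P$ is irreducible and, in particular, that $Z$ is not a plane), so only countably many $H$ can make $R_H$ identically zero on $Z$.

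For such a generic $H$, the set $A(H)$ lies in $\{P=0\}\cap\{R_H=0\}$, an algebraic curve of degree
$$\Deg P \cdot \deg R_H \;\le\; 6\,(\Deg P)^2 \;\le\; K^+ N^{2-2\sigma}$$
by B\'ezout (as in the earlier lemmas of this subsection). Finally, the Crofton-type bound recalled after Theorem \ref{algvol}—an algebraic curve of degree $D$ in $B^3(K^+ N)$ has length $\le K^+ N D$—gives the length estimate $\le K^+ N \cdot K^+ N^{2-2\sigma} = K^+ N^{3-2\sigma}$.

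The only subtle point is the genericity of $H$: one must check that $|A_x|^2$ is not a constant function on $Z$. This follows for generic $P$ essentially because $|A_x|^2 \equiv \mathrm{const}$ would make $Z$ an unusually symmetric surface (e.g.\ a plane when the constant is zero, or a round sphere); this is incompatible with choosing $P$ generically in a small neighborhood of our degree-reduction polynomial. Every other step is parallel to the proofs for $\mathrm{GFl}$, $Str(W)$, and $Eig(W)$ carried out just above.
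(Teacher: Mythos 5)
Your argument is essentially the paper's proof: expand $|A_x|^2$ via Lemma \ref{normformula} and equation $(1)$, clear denominators against $|\nabla P|^6$ to get a polynomial equation of degree $\lesssim 6\Deg P$ cutting out $A(H)$ on $Z$, then invoke genericity of $H$, B\'ezout, and the Crofton length bound, exactly as in the text. The one shaky spot is your heuristic that $|A_x|^2\equiv\mathrm{const}$ forces a plane or round sphere (a circular cylinder is a counterexample), but this is harmless and unnecessary: since $\nabla P\neq 0$ on $Z$, at most one value of $H>0$ can make $R_H$ vanish identically on $Z$, so generic $H$ works with no non-constancy claim at all.
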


\begin{proof} We expand $|A_x|^2$ in terms of $P$ and its derivatives.  For $v,w \in T_x Z$, 

$$ A(v,w) = \nabla_v N \cdot w = \nabla_v ( | \nabla P|^{-1} \nabla P) \cdot w. $$

Since $w \in T_x Z$, $\nabla P \cdot w = 0$, so  

$$ A(v,w) = |\nabla P|^{-1} \nabla_v \nabla P \cdot w. $$

Also, $\nabla_{N \times e_i} = \nabla_{|\nabla P|^{-1} \nabla P \times e_i} = |\nabla P|^{-1} \nabla_{\nabla P \times e_i}$.  

We plug these formulas into Lemma \ref{normformula}:

$$|A_x|^2 = \sum_{i,j=1}^3 |A_x(N \times e_i, N \times e_j)|^2 = \sum_{i,j = 1}^3 |\nabla P|^{-6} \left[(\nabla_{\nabla P \times e_i} \nabla P) \cdot (\nabla P \times e_j) \right]^2. $$

So $|A_x|^2 = H^2$ if and only if

$$ H^2 (\nabla P \cdot \nabla P)^3 - \sum_{i,j=1}^3 \left[(\nabla_{\nabla P \times e_i} \nabla P) \cdot (\nabla P \times e_j) \right]^2 = 0. $$

This equation is a polynomial equation of degree $\le 6 \Deg P$.  For generic $H$ this polynomial does not have $P$ as a factor, so the set $\{ x \in Z(P) \textrm{ such that } |A_x| = H \}$ is an algebraic curve of degree $\le 6 (\Deg P)^2$.  \end{proof}

We let $H > 0$ be a number that we will choose later.   We can add the following reasonable segment condition:

\begin{reasseg} \label{segA(H)bound} The length of $Seg(Q,T) \cap A(H)$ is $\le K^{-1+}$.
\end{reasseg}

We will choose a particular $H$ below, with $H \sim K^{1+} N^{-2 \sigma}$.  We will only need one $H$, but if we wanted to, we could choose $K^{1/2}$ different values $H_j$ and a reasonable condition would be that the length of $Seg(Q,T) \cap A(H_j)$ is $\le K^{-(1/2)+}$ for each of the values.

Suppose for a moment that $Seg(Q,T)$ intersected with $GFl, Str(W), Eig(W), Tan(W),$ and $A(H)$ (or $A(H_j)$) were all empty.  Then on each component of $Seg(Q,T)$, the second fundamental form of $Z$ would be highly constrained.  A technical issue is that these sets are not empty.  They are just small.  We get around this issue in the next subsection by intersecting $Seg(Q,T)$ with a plane.

We say that $Seg(Q,T)$ is a reasonable tube segment if it obeys Reasonable Tube Segment Conditions 1 - \ref{segA(H)bound}.

\subsection{Slices of reasonable tube segments}

Fix a reasonable tube segment $Seg(Q,T)$.  We will intersect the tube segment $Seg(Q,T)$ with a plane $\pi$ parallel to $v(T)$.  The intersection $Seg (Q,T) \cap \pi$ is a rectangle, and the intersection $Z \cap Seg(Q,T) \cap \pi$ is a curve $\Gamma$ in this rectangle.  This intersection reduces the dimension of our situation by one, making the geometry simpler.  Moreover, for a reasonable choice of $\pi$, $\Gamma$ will have no intersection with $Tan(W)$, $GFl$, $Str(W)$, $Eig(W)$, or $A(H)$.  After restricting to $\Gamma$, all the white lies above are true.  

We choose coordinates so that $T$ is given by the equation $x_1^2 + x_2^2 \le 1$.  By Reasonable Tube Condition \ref{tubeplaneangle}, we know that $\Angle(T_Q Z, v(T)) \le K^+ N^{-\sigma}$.  We choose the coordinates so that the $(x_1, x_3)$ plane is $K^+ N^{-\sigma}$ close to $T_Q Z$.  
 
We let $\pi(a,b)$ be the plane $x_1 + a x_2 = b$.  We choose $a$ uniformly at random in $(-1/10, 1/10)$ and we choose $b$ uniformly at random in $(-400, 400)$.  Because of the way we set up the coordinates, $\Angle (\pi(a,b), T_Q Z) \ge 1/10$ for all $(a,b)$. We state this as a lemma.

\begin{lemma} \label{anglepiabtqz} $\Angle( \pi(a,b), T_Q Z) \ge 1/10$.  
\end{lemma}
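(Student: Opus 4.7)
\bigskip

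\noindent\textbf{Proof proposal.}
The plan is to reduce the statement to a direct computation against the $(x_1,x_3)$-plane, which by the coordinate choice above is extremely close to $T_Q Z$. The whole argument should take just a few lines.

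First I would write the unit normals of the two planes in coordinates. The plane $\pi(a,b) = \{ x_1 + a x_2 = b\}$ has unit normal $n_\pi = (1,a,0)/\sqrt{1+a^2}$, while the $(x_1,x_3)$-plane has unit normal $n_0 = (0,1,0)$. Using the standard formula for the angle between two planes through-the-origin (identifying each plane with an element of the Grassmannian $\mathrm{Gr}(2,3)$), namely $\cos \Angle(P_1,P_2) = |n_1 \cdot n_2|$ for unit normals, I would compute
$$\cos \Angle\!\bigl(\pi(a,b),\ \text{the }(x_1,x_3)\text{-plane}\bigr) \;=\; \frac{|a|}{\sqrt{1+a^2}} \;\le\; \frac{1}{10}.$$
Consequently $\Angle(\pi(a,b),\ \text{the }(x_1,x_3)\text{-plane}) \ge \arccos(1/10)$, which is larger than $\pi/2 - 1/10$ and in particular much larger than $1/10$.

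Next I would invoke the triangle inequality for the angle-between-planes metric on $\mathrm{Gr}(2,3)$ to transfer this bound from the $(x_1,x_3)$-plane to $T_Q Z$. By the coordinate choice stated just before the lemma, the $(x_1,x_3)$-plane lies within angle $K^+ N^{-\sigma}$ of $T_Q Z$, and since we are assuming $N^\sigma$ exceeds a large polynomial in $E,\epsilon^{-1}$, the quantity $K^+ N^{-\sigma}$ is negligible compared to $\arccos(1/10)$. Therefore
$$\Angle(\pi(a,b),\, T_Q Z) \;\ge\; \arccos(1/10) \;-\; K^+ N^{-\sigma} \;\ge\; \tfrac{1}{10}.$$

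I do not anticipate any real obstacle. The only thing to be mildly careful about is to use the correct convention for $\Angle$ (the acute-angle convention on $\mathrm{Gr}(2,3)$, so that two nearly-parallel planes have small angle) and to verify that this convention satisfies the triangle inequality; this is a standard fact about the Frobenius-type metric on the Grassmannian and is already implicitly used in the preceding Reasonable Cube Condition arguments.
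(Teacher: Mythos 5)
Your proposal is correct and is essentially the paper's own argument: the paper does not give a separate proof but simply asserts the bound from the coordinate setup, namely that with $|a|\le 1/10$ the plane $\pi(a,b)$ is nearly perpendicular to the $(x_1,x_3)$-plane, which in turn lies within angle $K^+ N^{-\sigma}$ of $T_Q Z$, exactly as you spell out with the normal vectors and the triangle inequality on the angle metric. (One inconsequential slip: $\arccos(1/10)=\pi/2-\arcsin(1/10)$ is slightly \emph{less} than $\pi/2-1/10$, not greater, but since $\arccos(1/10)\approx 1.47$ is far larger than $1/10$ plus the negligible error $K^+N^{-\sigma}$, the conclusion stands.)
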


We let $\Gamma(a,b) = \pi(a,b) \cap Seg(Q,T) \cap Z$.  We say that a condition on $\Gamma(a,b)$ is reasonable if it holds with probability $\ge (1 - K^-)$.

For almost every $(a,b)$, $\pi(a,b) \cap Z(P)$ is an algebraic curve.

If $\gamma \subset Seg(Q,T)$ is a curve of length $L$, then the average over $(a,b)$ of the cardinality of $\pi(a,b) \cap \gamma$ is $\lesssim L$.  Therefore, with
probability $1 - K^-$, the intersections $\pi(a,b) \cap Tan(W), \pi(a,b) \cap GFl$, $\pi(a,b) \cap Eig(W)$, $\pi(a,b) \cap Str(W)$, and $\pi(a,b) \cap A(H)$ are all empty.  

\begin{reasslice} \label{badsetsempty} $\Gamma(a,b)$ does not intersect $Tan(W)$, $GFl$, $Str(W)$,  $Eig(W)$, or $A(H)$. 
\end{reasslice}

This condition has nice implications.  For a reasonable slice, as $x$ varies along a connected component of $\Gamma$, $T_x Z$ is constant up to angle $\le K^{-(1/4)+}$, and the sign of the Gauss curvature of $Z$ is constant.  If the Gauss curvature is negative, there are two straight directions at each point, and they vary continuously.   Since $\Gamma(a,b) \cap Str(W)$ is empty, the straight directions of $A_x$ are constant up to angle $K^{-(1/4)+}$ along each connected component.  If the Gauss curvature is positive, there are no straight directions.  There are always at least two eigenvector directions.  If $A_x$ is a multiple of the identity, then every direction is an eigenvector direction.  Such points lie in $Eig(w)$ for every $w$, and so there are no such points on $\Gamma$.   So at each point $x \in \Gamma$ with positive Gauss curvature, there are two distinct eigenvectors of $A_x$.  On each connected component of $\Gamma$, these eigenvector directions change by an angle $\le K^{-(1/4)+}$.  

In the last subsection, we proved an integral estimate for $|N \cdot v(T)|$ over $Z \cap Seg(Q,T)$.  Using Lemma \ref{intgeomavgest}, any integral estimate over $Z \cap Seg(Q,T)$ gives us a similar estimate over $\Gamma(a,b)$ for reasonable slices.  In particular, we get the following.

\begin{reasslice} \label{slicetangest} $\int_{\Gamma} |N \cdot v(T)| \le K^{-1+}$.
\end{reasslice}

\begin{proof} By Reasonable Tube Segment Condition \ref{segtangest}, $\int_{Z \cap Seg(Q,T)} |N \cdot v(T)| \le K^{-1+}$.  We apply Lemma \ref{intgeomavgest} to compute 

$$ \Avg_{a,b} \int_{\Gamma(a,b)} |N \cdot v(T)| \sim \int_{Z \cap Seg(Q,T)} |N \cdot v(T)| \le K^{-1+}. $$

So with probability $(1 - K^-)$, we have the desired estimate.

\end{proof}

Now we consider the geometry of the curve $\Gamma \subset \pi(a,b)$.  We let $N_\Gamma$ be the unit normal vector to $\Gamma$ inside $\pi(a,b)$.  We define the second fundamental form $A_\Gamma$.  (If $v,w \in T_x\Gamma$, then $A_\Gamma (v,w) = \nabla_v N_\Gamma \cdot w$.)  We continue to write $N$ for the normal vector to $Z$ and $A$ for the second fundamental form of $Z$.  We would like to use our information about $N$ and $A$ to study $N_\Gamma$ and $A_\Gamma$.  We begin by proving a standard differential geometry lemma about how $N$, $N_\Gamma$, $A$, $A_\Gamma$ are related.  Fix a point $x \in \Gamma(a,b) \in Z$.

\begin{lemma} \label{curvsubman} Suppose that $x \in \Gamma(a,b)$ and that $\Angle (\pi(a,b), T_x Z) = \alpha(x) > 0$.  Let $\phi$ be the orthogonal projection from $\RR^3$ to $\pi(a,b)$.

\begin{enumerate}

\item $N_\Gamma(x) = (\sin \alpha)^{-1} \phi(N(x))$.

\item If $v \in \pi(a,b)$, $v \cdot N_\Gamma(x) = (\sin \alpha)^{-1} v \cdot N(x)$.

\item If $v, w \in T_x \Gamma \subset T_x Z$, $A_{\Gamma, x}(v,w) = (\sin \alpha)^{-1} A_x (v,w)$.

\end{enumerate}

\end{lemma}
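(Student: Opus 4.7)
The plan is to treat the three parts in order, with (1) setting up the geometry, (2) a one-line projection calculation, and (3) a reduction to the one-dimensional version of the second fundamental form along an arc-length parametrization.

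For (1), I would first observe that the tangent line $T_x \Gamma$ equals $T_x Z \cap \pi(a,b)$, so $T_x \Gamma \subset \pi(a,b)$ and $T_x \Gamma \subset T_x Z$. The vector $N_\Gamma(x)$ is, by definition, the unit vector in $\pi(a,b)$ perpendicular to $T_x \Gamma$. Since $N(x) \perp T_x Z \supset T_x \Gamma$ and $T_x \Gamma \subset \pi(a,b)$, the projection $\phi(N(x))$ lies in $\pi(a,b)$ and is perpendicular to $T_x \Gamma$; hence $\phi(N(x))$ is a scalar multiple of $N_\Gamma(x)$. To find the scalar, let $N_\pi$ be a unit normal to $\pi(a,b)$. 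The angle between $T_x Z$ and $\pi(a,b)$ equals the angle $\alpha$ between the two normals $N$ and $N_\pi$ (up to supplementation), so $N \cdot N_\pi = \pm\cos\alpha$ and $|\phi(N)|^2 = 1 - \cos^2\alpha = \sin^2\alpha$. Choosing the appropriate sign convention for $N_\Gamma$, this gives $\phi(N(x)) = \sin\alpha \cdot N_\Gamma(x)$, which is (1).

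Part (2) is an immediate consequence: for any $v \in \pi(a,b)$, one has $v \cdot N = v \cdot \phi(N) + v \cdot (N - \phi(N))$, and the second summand vanishes because $N - \phi(N) \perp \pi(a,b)$ while $v \in \pi(a,b)$. Substituting the formula from (1) yields $v \cdot N = \sin\alpha \cdot v \cdot N_\Gamma$, and dividing by $\sin\alpha$ gives (2).

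For (3), parametrize $\Gamma$ near $x$ by arc length, $s \mapsto \gamma(s)$, with $\gamma(0) = x$. Let $T = \gamma'(0) \in T_x\Gamma$; since $\Gamma \subset \pi(a,b)$, we have $\gamma''(0) \in \pi(a,b)$. Differentiating the identity $\gamma'(s) \cdot N_\Gamma(\gamma(s)) \equiv 0$ (which holds because $\gamma'(s) \in T_{\gamma(s)}\Gamma$) gives $\gamma''(0) \cdot N_\Gamma(x) = -\gamma'(0) \cdot \nabla_T N_\Gamma = -A_{\Gamma,x}(T,T)$, and similarly, differentiating $\gamma'(s) \cdot N(\gamma(s)) \equiv 0$ (valid since $\gamma(s) \in Z$ and $\gamma'(s) \in T_{\gamma(s)}Z$) gives $\gamma''(0) \cdot N(x) = -A_x(T,T)$. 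Because $\gamma''(0) \in \pi(a,b)$, part (2) applies and yields $\gamma''(0) \cdot N_\Gamma(x) = (\sin\alpha)^{-1} \gamma''(0) \cdot N(x)$, so $A_{\Gamma,x}(T,T) = (\sin\alpha)^{-1} A_x(T,T)$. Since $T_x\Gamma$ is one-dimensional, every $v, w \in T_x\Gamma$ is a scalar multiple of $T$, and bilinearity extends the identity to arbitrary $v, w \in T_x\Gamma$.

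There is no serious obstacle here; the entire lemma is bookkeeping about how submanifold geometry behaves under intersection with a transverse plane. The only mild care required is in (1), where one must verify that the angle $\alpha$ between the planes $T_x Z$ and $\pi(a,b)$ indeed equals the angle between their unit normals (hence the factor $\sin\alpha$ rather than $\cos\alpha$); the hypothesis $\alpha > 0$ is exactly what makes the division by $\sin\alpha$ legitimate throughout.
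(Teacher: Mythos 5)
Your proposal is correct. Parts (1) and (2) coincide with the paper's argument: you show $\phi(N(x))$ lies in $\pi(a,b)$ and is perpendicular to $T_x\Gamma$, compute $|\phi(N)|=\sin\alpha$, and then use that $N-\phi(N)$ is perpendicular to (the direction space of) $\pi(a,b)$ to get (2); the only cosmetic difference is that the paper pins down the sign of $N_\Gamma$ by noting that $N$ and $\phi(N)$ both point toward increasing $P$, whereas you fold this into a choice of sign convention, which is harmless since the paper never fixed the sign of $N_\Gamma$ beforehand.

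For part (3) you take a genuinely different, though equally elementary, route. The paper differentiates the identity $N_\Gamma=(\sin\alpha)^{-1}\phi(N)$ along $\Gamma$: the term in which $(\sin\alpha)^{-1}$ gets differentiated is killed by $\phi(N)\cdot w=0$, and the remaining term $\nabla_v\bigl(\phi(N)\bigr)\cdot w$ is identified with $\nabla_v N\cdot w=A_x(v,w)$ because $\phi(N)-N$ stays perpendicular to $w\in\pi(a,b)$ along $\Gamma$. You instead parametrize $\Gamma$ by arc length, differentiate the two orthogonality relations $\gamma'\cdot N_\Gamma=0$ and $\gamma'\cdot N=0$ to express $A_{\Gamma,x}(T,T)$ and $A_x(T,T)$ as $-\gamma''(0)\cdot N_\Gamma$ and $-\gamma''(0)\cdot N$ respectively, note $\gamma''(0)$ is parallel to $\pi(a,b)$, and apply part (2); one-dimensionality of $T_x\Gamma$ plus bilinearity then gives the general case. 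Your version has the small advantage of never needing to discuss the $x$-dependence of $\alpha$ or of the projection $\phi$, at the cost of invoking smoothness of $\Gamma$ near $x$ (which the hypothesis $\alpha(x)>0$ guarantees, since it forces $\pi(a,b)$ to meet $Z$ transversally there). The paper's computation is more directly reusable later in the section, where the relation is applied with the explicit factor $(\sin\alpha)^{-1}$ and the transversality bound $\Angle(\pi(a,b),T_{x'}Z)\ge 1/20$, but both arguments are complete and correct.
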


\begin{proof} We know that $N(x)$ is perpendicular to any $v \in T_x \Gamma \subset T_x Z$.  On the other hand,
$N(x) - \phi (N(x))$ is perpendicular to $\pi(a,b)$, and hence to any $v \in T_x \Gamma \subset \pi(a,b)$.  Therefore, $\phi (N(x))$ is perpendicular to $T_x \Gamma$.  The vectors $N(x)$ and $\phi(N(x))$ both point in the direction where $P$ is increasing.  Therefore, $N_\Gamma = \phi(N(x)) / | \phi(N(x))|$.  By trigonometry, $|\phi(N(x))| = \sin \alpha$.

Suppose $v \in \pi(a,b)$.  Plugging in (1), $v \cdot N_\Gamma(x) =  (\sin \alpha)^{-1} v \cdot \phi(N(x))$.  The difference $N(x) - \phi (N(x))$ is perpendicular to $\pi(a,b)$, so $v \cdot \phi(N(x)) = v \cdot N(x)$.  This gives $(2)$.

Suppose $v,w \in T_x \Gamma$.  

$$ A_\Gamma (v,w) = \nabla_v N_\Gamma \cdot w = \nabla_v ( (\sin \alpha)^{-1} \phi(N) ) \cdot w = $$

$$ = (\sin \alpha)^{-1} \nabla_v \phi(N) \cdot w + \nabla_v ( (\sin \alpha)^{-1} )  \phi(N) \cdot w. $$

Now $\phi(N)$ is normal to $T_x \Gamma$, so $\phi(N) \cdot w = 0$ and the second term vanishes.  For the first term, we note that $\phi$ and $w$ don't depend on $x$, and so $\nabla_v (\phi(N) ) \cdot w = \nabla_v ( \phi(N) \cdot w)$.  Now note that $\phi(N(y)) - N(y)$ is perpendicular to $w \in \pi(a,b)$ for every $y \in \Gamma$, and so $\nabla_v (\phi(N) \cdot w) = \nabla_v (N \cdot w) = (\nabla_v N) \cdot w = A(v,w)$.  So the first term simplifies to $(\sin \alpha)^{-1} A_x (v,w)$.  \end{proof}

This lemma shows that $N_\Gamma$ and $A_\Gamma$ are well behaved at $x$ as long as $\Angle (\pi(a,b), T_x Z)$ is not too small.  We next note that this angle is always fairly large for $x \in Z_{Q,nice}$.

\begin{lemma} \label{anglepiabnice}  If $Q$ is a reasonable cube and $x \in Z_{Q,nice}$, then $\Angle( \pi(a,b), T_x Z) \ge 1/12$.
\end{lemma}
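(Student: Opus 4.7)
The plan is to prove this by a one-line triangle inequality, using exactly the two estimates already at our disposal: the lower bound on $\Angle(\pi(a,b), T_Q Z)$ from Lemma \ref{anglepiabtqz}, and the upper bound on $\Angle(T_x Z, T_Q Z)$ from Reasonable Cube Condition \ref{paramdisk}. There is essentially no geometric content beyond combining these two facts.

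More concretely, first I would recall that the angle between two planes in $\RR^3$ (defined as the angle between their unit normal vectors, taken in $[0, \pi/2]$) is a metric on the Grassmannian of $2$-planes, so it satisfies the triangle inequality
\[
  \Angle(\pi(a,b), T_x Z) \;\ge\; \Angle(\pi(a,b), T_Q Z) \;-\; \Angle(T_Q Z, T_x Z).
\]
Lemma \ref{anglepiabtqz} bounds the first term on the right below by $1/10$. For the second term, recall that $Z_{Q,nice}$ was defined in Reasonable Cube Condition \ref{paramdisk} as a union of graphs of Lipschitz functions $f_j$ over $Y$ with Lipschitz constant $\le 10 \lambda$, where $\lambda = K^{-10}$; this immediately implies that the tangent plane $T_x Z$ makes angle $\le 10 \lambda = 10 K^{-10}$ with the $(x_1,x_2)$-plane. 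The $(x_1,x_2)$-plane was chosen in the definition of $Cyl_H(Q)$ to be $T_Q Z$, so $\Angle(T_Q Z, T_x Z) \le 10 K^{-10}$.

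Plugging these into the triangle inequality,
\[
  \Angle(\pi(a,b), T_x Z) \;\ge\; \tfrac{1}{10} - 10 K^{-10}.
\]
Since $K = C(E \epsilon^{-1})^A$ is taken large (in particular, $K \ge 2$, so $10 K^{-10} \le 10/1024 < 1/10 - 1/12$), the right-hand side is at least $1/12$. This gives the claimed bound, and the only obstacle worth flagging is the bookkeeping point that the approximation of $T_Q Z$ by the $(x_1,x_3)$-plane in the slice set-up (which is only $K^+ N^{-\sigma}$-close) should not be confused with the $(x_1,x_2)$-plane used inside $Cyl_H(Q)$; but both errors are absorbed trivially into the gap between $1/10$ and $1/12$.
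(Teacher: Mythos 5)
Your proposal is correct and is exactly the paper's argument: the paper's proof consists of citing Lemma \ref{anglepiabtqz} for $\Angle(\pi(a,b), T_Q Z) \ge 1/10$ and the remark after Reasonable Cube Condition \ref{paramdisk} that $\Angle(T_x Z, T_Q Z) \le 10\lambda \le K^{-10+}$ for $x \in Z_{Q,nice}$, then combining them. Your extra bookkeeping about the two coordinate systems and the triangle inequality on the Grassmannian is fine but adds nothing beyond what the paper leaves implicit.
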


\begin{proof} By Lemma \ref{anglepiabtqz}, $\Angle( \pi(a,b), T_Q Z) \ge 1/10$.  On the other hand, by Reasonable Cube Condition \ref{paramdisk}, $\Angle(T_x Z, T_Q Z) \le K^{-10+}$. 
\end{proof}

Using this bound, we can now start to control the geometry of a reasonable slice $\Gamma(a,b)$ through a point $x \in Z_{Q,nice}$.

\begin{lemma} \label{totalvarbound} Suppose that $Seg(Q,T)$ is a reasonable tube segment and $\Gamma$ is a reasonable slice of $Z \cap Seg(Q,T)$, and that there is a point $x \in \Gamma \cap Z_{Q,nice}$.  Let $\Gamma_1 \subset \Gamma$ be the component of of $\Gamma$ containing $x$.  Then $\Gamma_1$ obeys the following estimates:

\begin{enumerate}

\item $\Angle(\pi(a,b), T_{x'} Z) \ge (1/20)$ for all $x' \in \Gamma_1$. 

\item $\Gamma_1$ runs the whole length of $Seg(Q,T)$ .

\item $ \int_{\Gamma_1} | N_\Gamma \cdot v(T) | dx \le K^{-1+}. $

\end{enumerate}

\end{lemma}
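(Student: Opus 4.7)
The plan is to establish the parts in the order (1), (3), (2), since (3) uses (1) through Lemma \ref{curvsubman} and (2) relies on both. The relevant inputs are: $x \in Z_{Q,nice}$, so $\Angle(T_xZ, T_QZ) \le K^{-10+}$ by Reasonable Cube Condition \ref{paramdisk}; $\Gamma$ is a reasonable slice, giving $\Gamma \cap Tan(W) = \emptyset$ (Condition \ref{badsetsempty}) and $\int_\Gamma |v(T) \cdot N| \le K^{-1+}$ (Condition \ref{slicetangest}); and the coordinate setup puts $v(T) = e_3 \in \pi(a,b)$, keeps $T_QZ$ within angle $K^+ N^{-\sigma}$ of the $(x_1,x_3)$-plane, and ensures $\Angle(\pi(a,b), T_QZ) \ge 1/10$.

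For (1), $\Gamma_1$ is connected and disjoint from $Tan(W)$, so the unit normal $N$ restricted to $\Gamma_1$ lies in a single cell of $S^2$ cut out by the great circles $w^\perp$, $w \in W$. Since $W$ is a $K^{-1/4}$-net, these cells have diameter $O(K^{-(1/4)+})$; hence $\Angle(T_{x'}Z, T_xZ) \le O(K^{-(1/4)+})$ for $x' \in \Gamma_1$. Combining with Lemma \ref{anglepiabnice} gives $\Angle(\pi(a,b), T_{x'}Z) \ge 1/12 - O(K^{-(1/4)+}) \ge 1/20$ for $K$ large. For (3), the bound (1) lets us apply Lemma \ref{curvsubman}(2) throughout $\Gamma_1$ (since $v(T) \in \pi(a,b)$): $v(T) \cdot N_\Gamma = (\sin \alpha)^{-1} v(T) \cdot N$ with $\sin \alpha \ge \sin(1/20)$ bounded below. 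Integrating over $\Gamma_1 \subset \Gamma$ then gives $\int_{\Gamma_1} |v(T) \cdot N_\Gamma| \lesssim \int_\Gamma |v(T) \cdot N| \le K^{-1+}$.

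For (2), (1) makes $Z \cap \pi(a,b)$ a smooth embedded curve along $\Gamma_1$, so $\Gamma_1$ is either a closed loop or an embedded arc. The tangent $\tau$ sits in $T_{x'}Z \cap \pi(a,b)$; since $T_{x'}Z$ is within angle $O(K^{-(1/4)+})$ of the $(x_1,x_3)$-plane and $\pi(a,b)$ meets that plane in the line parallel to $v(T)$, the direction $\tau$ lies within angle $O(K^{-(1/4)+})$ of $v(T)$, so $v(T) \cdot \tau > 0$ (after orienting) throughout $\Gamma_1$ and the $x_3$-coordinate is strictly monotone along $\Gamma_1$, ruling out the loop case. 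Thus $\Gamma_1$ is an arc with endpoints on $\partial Seg(Q,T)$, and it remains to rule out endpoints on the cylindrical side $\{x_1^2 + x_2^2 = 100^2\}$. Parameterize $\Gamma_1$ by arclength $s$, and let $e_\perp$ be the unit vector in $\pi(a,b)$ perpendicular to $v(T)$; then the derivative of the $e_\perp$-component along $\Gamma_1$ equals $\tau \cdot e_\perp$, which in absolute value is $|v(T) \cdot N_\Gamma|$, so by (3) the total transverse displacement along $\Gamma_1$ starting from $x$ is at most $K^{-1+}$. Since $x \in Z_{Q,nice} \subset B^2(10) \times \RR$, every point of $\Gamma_1$ has $(x_1,x_2)$-distance at most $10 + K^{-1+}$ from the axis of $T$, comfortably inside $T^+$, so $\Gamma_1$ cannot exit through the cylindrical side and must extend to both disk ends. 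The main obstacle is precisely this exit argument: a pointwise tangent estimate alone permits transverse drift of order $K^{-(1/4)+} \cdot K^{-1} N^\sigma$, which dwarfs the tube radius once $N^\sigma$ is large, and only the integral bound from (3), itself a descendant of Lemma \ref{cylinderestimate}, gives the needed $K^{-1+}$ bound on the total drift.
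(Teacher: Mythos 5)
Your proposal is correct and follows essentially the same route as the paper's proof: part (1) from $\Gamma \cap Tan(W) = \emptyset$ together with Lemma \ref{anglepiabnice}, part (3) from that transversality via Lemma \ref{curvsubman} and Reasonable Slice Condition \ref{slicetangest}, and part (2) from the resulting $K^{-1+}$ bound on the total variation of $\Gamma_1$ perpendicular to $v(T)$ combined with the fact that $x \in Z_{Q,nice}$ lies well inside $Seg(Q,T)$. Your additional details for (2) (near-monotonicity of the $v(T)$-coordinate ruling out closed loops, and the explicit exclusion of exit through the lateral boundary) just spell out what the paper states tersely, with only a harmless slip in the exact radius bound for $Z_{Q,nice}$ relative to the axis of $T$.
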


\begin{proof} By Lemma \ref{anglepiabnice}, we know that $\Angle(\pi(a,b), T_x Z) \ge 1/12$.  

We know that $\Gamma \cap Tan(W)$ is empty.  Therefore, for all $x' \in \Gamma_1$, the tangent plane $T_{x'} Z$ is within an angle $K^{-(1/4)+}$ of $T_x Z$.  Therefore, $\Angle( \pi(a,b), T_{x'} Z) \ge (1/20)$ for all $x'$ in $\Gamma_1$.

Now that we have transversality, we can bound

$$ \int_{\Gamma_1} |N_\Gamma \cdot v(T)| \le K^+ \int_{\Gamma} |N \cdot v(T)| \le K^{-1+}. $$

Because of this integral estimate, the total variation of $\Gamma_1$ perpendicular to $v(T)$ is $\le K^{-1+}$.  Since the point $x$ lies in $Z_{Q,nice}$, which is well within the boundary of $Seg(Q,T)$, the curve $\Gamma_1$ must run the whole length of $Seg(Q,T)$.  

\end{proof}

We let $Q_1, Q_2$ be the two reasonable cubes at opposite ends of $Seg^+(Q,T)$ described in Reasonable Tube Segment Condition \ref{boundarycubes0}.

\begin{reasslice} \label{boundarycubes}

$$ \int_{\Gamma(a,b) \cap (Q_i)^+} |N \cdot v(T)| \le K^+ N^{-\sigma}. $$

\end{reasslice}

\begin{proof} By Reasonable Tube Segment Condition \ref{boundarycubes0}, we have $\int_{Z \cap Q_i^+} |N \cdot v(T) | \le K^+ N^{-\sigma}$.  Then we average using Lemma \ref{intgeomavgest}.  We note that $Z \cap Seg(Q,T) \cap \pi(a,b) = \Gamma(a,b)$, and we get 

$$ \Avg_{(a,b)} \int_{\Gamma(a,b) \cap Q_i^+} |N \cdot v(T)| \sim
\int_{Seg(Q,T) \cap Z}  |N \cdot v(T)| \le K^+ N^{-\sigma}. $$

So with probability $(1 - K^-)$ in $(a,b)$, the desired estimate holds.  \end{proof}

If $Seg(Q,T)$ is a reasonable tube segment, then we say that a slice $\Gamma(a,b)$ is a reasonable slice if it obeys Reasonable Slice Conditions 1 - \ref{boundarycubes}.

\subsection{Curvature estimates in non-straight directions}

For a given point $x \in Z$, a unit vector $v \in T_x Z$ is called straight if $A_x(v,v) = 0$.  If $x$ is not a flat point, then it has at most four straight unit vectors.   (The unit vectors come in pairs $\pm v$, and there are at most two such pairs.)  We will be interested in how far $v(T)$ is from being straight, at points $x \in Z$.  Roughly speaking, if a direction $v$ is ``far from straight'', then $|A_x(v,v)| \sim |A_x|$.  

For $x \in Z$, $v \in T_x Z$, $|v|=1$, define 

$$S_1(x,v):= \min_{w \in T_x Z, |w| = 1, w \textrm{ straight }} |v-w|. $$

This measures the angle from $v$ to a straight direction.  If $A_x$ has negative Gauss curvature, then we will prove below that when $S_1(x,v) \sim 1$, then $|A_x(v,v)| \sim |A_x|$.  If $A_x$ has positive Gauss curvature, then there are no straight directions, but there could still be a direction $v$ where $|A_x(v,v)|$ is much smaller than $|A_x|$.  In the positive Gauss curvature case, we measure the angle from $v$ to an eigenvector.  Recall that $A_x$ is called umbilic if it has two equal eigenvalues - in other words, if $A_x(v,v) = \lambda v \cdot v$ for some real number $\lambda$.  If $A_x$ is not umbilic, then it has two exactly two eigenvectors.  

$$S_2(x,v) := \min_{w \in T_x Z, |w|=1, w \textrm{ an eigenvector of } A_x} |v-w|. $$

Finally, for $x \in Z$, $v \in T_x Z$, $|v| = 1$, define $S(x,v)$ as follows:

\begin{itemize}

\item If the Gauss curvature of $Z$ at $x$ is negative, then $S(x,v) = S_1(x,v)$.

\item If the Gauss curvature of $Z$ at $x$ is non-negative and $A_x$ is not umbilic, then $S(x,v) = S_2(x,v)$.

\item If $A_x$ is umbilic, then $S(x,v) = 1$ for all $v$.

\end{itemize}

The point of this definition is that when $S(x,v)$ is not close to 0, $|A_x(v,v)|$ is comparable to $|A_x|$.
Informally, controlling $A_x$ in a non-straight direction controls $A_x$ in all directions.  We now state this precisely.

\begin{lemma} \label{totalvsdirv} For any $x \in Z$ and any unit vector $v \in T_x Z$, 

$$|A_x| \le 100 S(x,v)^{-2} |A_x(v,v)|. $$

\end{lemma}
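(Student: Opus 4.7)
The plan is to reduce the inequality to an explicit angular calculation by diagonalizing the symmetric bilinear form $A_x$ on the $2$-plane $T_x Z$. The statement is trivial when $A_x = 0$, so assume otherwise; pick an orthonormal eigenbasis $e_1, e_2$ of $A_x$ with eigenvalues $\lambda_1, \lambda_2$ ordered by $|\lambda_1| \ge |\lambda_2|$, so that $|A_x|^2 = \lambda_1^2 + \lambda_2^2 \le (\lambda_1 + |\lambda_2|)^2$. Writing $v = \cos\theta\,e_1 + \sin\theta\,e_2$ gives $A_x(v,v) = \lambda_1\cos^2\theta + \lambda_2\sin^2\theta$, and I split into three cases matching the three clauses of the definition of $S(x,v)$.

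If $A_x$ is umbilic then $S = 1$ by definition and $|A_x| = \sqrt{2}|\lambda_1| = \sqrt{2}\,|A_x(v,v)|$, which suffices. If the Gauss curvature is non-negative and $A_x$ is not umbilic then both eigenvalues have the same sign (WLOG non-negative), $S$ measures angular distance to $\{\pm e_1, \pm e_2\}$, and in particular $S \le 2\sin(\pi/8) < 1$. If the nearest eigenvector is $e_1$, so $|\theta| \le \pi/4$, then $A_x(v,v) \ge \lambda_1\cos^2\theta \ge \lambda_1/2 \ge |A_x|/(2\sqrt{2})$, and the bound follows from $S \le 1$. If instead the nearest eigenvector is $e_2$, setting $\psi = \pi/2 - \theta$ I would use $A_x(v,v) \ge \lambda_1\sin^2\psi$ together with $|\sin\psi| \ge (2/\pi)|\psi|$ and $S \le |\psi|$ to land on $A_x(v,v) \ge (4/\pi^2)\lambda_1 S^2$.

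The substantive case is negative Gauss curvature, $\lambda_1 > 0 > \lambda_2$. The difficulty here is that when $|\lambda_2|/\lambda_1$ is very small, $A_x(v,v)$ vanishes only linearly at a straight direction, so a naive Taylor expansion near that direction gives $|A_x(v,v)|/|A_x| \gtrsim \sqrt{|\lambda_2|/\lambda_1}\,S$ rather than $S^2$; recovering a uniform $S^2$ bound is what the main obstacle turns on. The key is the algebraic identity
\[
A_x(v,v)\;=\;(\lambda_1+|\lambda_2|)\,\sin(\theta_0-\theta)\,\sin(\theta_0+\theta),
\]
where $\theta_0 = \arctan\sqrt{\lambda_1/|\lambda_2|} \in [\pi/4,\pi/2)$ is a straight angle; it follows from $\sin^2\theta_0 = \lambda_1/(\lambda_1+|\lambda_2|)$, $\cos^2\theta_0 = |\lambda_2|/(\lambda_1+|\lambda_2|)$, and a difference-of-squares. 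The symmetries $\theta \to -\theta$ and $\theta \to \theta + \pi$ permute straight angles, so I may assume $\theta_0$ is the straight angle nearest to $\theta$; setting $\tau = \theta - \theta_0$, $\delta_0 = \pi/2 - \theta_0 \in (0,\pi/4]$, and applying $\sin(\pi - x) = \sin x$, the factorization rewrites as
\[
|A_x(v,v)|\;=\;(\lambda_1+|\lambda_2|)\,|\sin\tau|\,|\sin(2\delta_0-\tau)|\qquad\text{for}\ \tau\in(\delta_0-\pi/2,\delta_0).
\]

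The crux is then the clean claim that on this interval both sine factors are at least $(2/\pi)|\tau|$. The first follows from $|\sin y| \ge (2/\pi)|y|$ on $[-\pi/2,\pi/2]$. The second follows from the concavity bound $\sin y \ge (2/\pi)\min(y,\pi-y)$ on $[0,\pi]$ together with the two linear inequalities $2\delta_0 - \tau \ge |\tau|$ and $\pi - 2\delta_0 + \tau \ge |\tau|$, which one checks directly in both signs of $\tau$ in our range. Multiplying the two factors, using $|A_x| \le \lambda_1 + |\lambda_2|$ and $S \le |\tau|$, yields $|A_x(v,v)| \ge (4/\pi^2)|A_x|\,S^2$, hence $|A_x| \le (\pi^2/4) S^{-2}|A_x(v,v)| < 100\,S^{-2}|A_x(v,v)|$. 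The one nontrivial step is identifying the factorization above so that both sine factors are simultaneously controlled by $|\tau|$; once that is in hand the rest is elementary bookkeeping.
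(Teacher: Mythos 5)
Your proof is correct, and it is essentially the paper's argument: in the negative-curvature case your identity $A_x(v,v)=(\lambda_1+|\lambda_2|)\sin(\theta_0-\theta)\sin(\theta_0+\theta)$ is exactly the paper's factorization of the indefinite form into two linear factors $L_1(v)L_2(v)$, written in eigen-coordinates, with each factor bounded below by a multiple of the distance from $v$ to the straight directions (hence by $S$), and your remaining cases (same-sign eigenvalues via diagonalization, and the umbilic case) match the paper's treatment up to bookkeeping. The explicit interval analysis you present as the crux is just a more computational route to the paper's bound $|L_i|\le 2S(x,v)^{-1}|L_i(v)|$.
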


\begin{proof}  We give slightly different proofs in the case of negative and non-negative Gauss curvature.

In the case of negative Gauss curvature, we can write the second fundamental form as a product of two linear functions: $A_x(v,v) = L_1(v) L_2 (v)$, where $L_1, L_2$ are linear maps from $T_x Z$ to $\RR$.  We let $|L_i|$ denote the maximum of $|L_i(v)|$ over all vectors $v \in T_x Z$ with $|v| \le 1$.  For each $i$, we have $|L_i| \le 2 S(x,v)^{-1} |L_i(v)|$.  Therefore, $|A_x| \le 5 |L_1| |L_2| \le 20 S(x,v)^{-2} |L_1(v)| |L_2(v)| = 20 S(x,v)^{-2} |A_x(v,v)|$.

Suppose that $Z$ has non-negative Gauss curvature at $x$ and that $A_x$ is not umbilic.  Then there are unit eigenvectors $v_1, v_2$ for $A_x$.  We have $A_x(v_i, v_i) = \lambda_i$, and $A_x(v_i, v_j) = 0$ for $i \not= j$.  Because the Gauss curvature is non-negative, the two $\lambda_i$ have the same sign (or else one of them vanishes).  The vector $v$ can be written as $v = a v_1 + b v_2$, where $|a|, |b| \ge (1/5) S(x,v)$.  Now $|A_x(v,v)| = |a^2
\lambda_1 + b^2 \lambda_2|$.  Because the eigenvalues have the same sign, this is $\ge \min(a^2, b^2) \max (\lambda_1, \lambda_2) \ge (1/100) S(x,v)^2 |A_x|$.  

Finally, if $A_x$ is umbilic, then $A_x(v,v) = \lambda v \cdot v$, and $|A_x| = \sqrt2 \lambda$, so we have $|A_x| \le 100 |A_x(v,v)|$ for every unit vector $v$.

\end{proof}

Recall that we defined $A(H)$ to be the set of points $x \in Z(P)$ where $|A_x| = H$.  We proved that for a reasonable slice, $\Gamma \cap A(H)$ is empty for a particular value $H$ that we would choose later.  In the proof of the next lemma, we will choose this $H < K^{1+} N^{-2 \sigma}$, and we will prove that along reasonable slices ``in non-straight directions'',  the second fundamental form is bounded by $H$.

\begin{lemma} \label{curvboundnonstraightdir} Suppose that $Seg(Q,T)$ is a reasonable tube segment and $\Gamma$ is a reasonable slice of $Z \cap Seg(Q,T)$, and that there is a point $x \in \Gamma \cap Z_{Q,nice}$ where $S(x, v(T)) \ge K^-$.  Let $\Gamma_1 \subset \Gamma$ be the component of of $\Gamma$ containing $x$.  Then at every point $x'  \in \Gamma_1$, we have 

$$|A_{x'}| < H \le K^{1+} N^{-2 \sigma}. $$

\end{lemma}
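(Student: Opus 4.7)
I would approach the lemma by contradiction. If $|A_{x'}| \ge H$ somewhere on $\Gamma_1$, then by continuity of $|A|$ and the fact that $\Gamma$ avoids $A(H)$ (Reasonable Slice Condition~\ref{badsetsempty}), $|A_{x'}| > H$ at \emph{every} point of $\Gamma_1$. First I would propagate the non-straightness hypothesis $S(x, v(T)) \ge K^-$: because $\Gamma$ avoids $Str(W)$, $Eig(W)$, and $GFl$ with $W$ a $K^{-1/4}$-net in $S^2$, the sign of the Gauss curvature of $Z$ is constant on $\Gamma_1$, and the relevant distinguished directions of $A_{x'}$ (straight in the negative-Gauss-curvature case, eigenvector in the positive case) rotate by at most $K^{-(1/4)+}$. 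Since $K^-$ is much larger than $K^{-(1/4)+}$, the non-straightness persists as $S(x', v(T)) \ge \tfrac12 K^-$ for every $x' \in \Gamma_1$. Applying Lemma~\ref{totalvsdirv} to the tangent projection $v(T)_T$ of $v(T)$ then gives $|A_{x'}(v(T)_T, v(T)_T)| \ge (K^-)^2 H/400$ with $A_{x'}(v(T)_T, v(T)_T)$ of constant sign along $\Gamma_1$.

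Next I would work in the slice $\pi(a,b)$. Because $v(T)$ lies in $\pi(a,b)$ (by the coordinate choice), Lemma~\ref{curvsubman}(2) says $g(s) := N_\Gamma(x'(s)) \cdot v(T) = (\sin \alpha)^{-1}(N(x'(s)) \cdot v(T))$, where $\sin \alpha \ge 1/20$ by Lemma~\ref{totalvarbound}(1). Parametrising $\Gamma_1$ by arc length and orienting so $T_{\Gamma_1} \cdot v(T) \ge 0$, the 2D Frenet--Serret formula combined with Lemma~\ref{curvsubman}(3) gives
\[
|g'(s)| = (\sin \alpha)^{-1} |A_{x'(s)}(T_{\Gamma_1}, T_{\Gamma_1})| \sqrt{1 - g(s)^2},
\]
and wherever $T_{\Gamma_1}$ is close enough to $\pm v(T)_T$ that $A_{x'}(T_{\Gamma_1}, T_{\Gamma_1})$ retains the sign and half the size of $A_{x'}(v(T)_T, v(T)_T)$, the sign of $g'$ is constant as well. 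By Reasonable Slice Condition~\ref{boundarycubes} and the fact that $\Gamma_1$ runs the full length of $Seg(Q,T)$ (Lemma~\ref{totalvarbound}(2)), I would pick $y_i \in \Gamma_1 \cap Q_i^+$ with $|N(y_i) \cdot v(T)| \le K^+ N^{-\sigma}$, giving $|g(y_i)| \le 20 K^+ N^{-\sigma}$ and $\Dist_{\Gamma_1}(y_1, y_2) \ge K^{-1-} N^\sigma$, hence $|g(y_2) - g(y_1)| \le 40 K^+ N^{-\sigma}$.

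The contradiction would then come from splitting $[y_1, y_2]$ into a good set $G$, where $|T_{\Gamma_1} - v(T)_T|$ is smaller than a fixed multiple of $(K^-)^2$, and its complement $B$. Since $|T_{\Gamma_1} - v(T)_T|$ is controlled by $|g| + |v(T) \cdot N|$, Reasonable Slice Condition~\ref{slicetangest} together with its $N_\Gamma$-analogue yield $\int_{\Gamma_1}(|g| + |v(T) \cdot N|) \le K^{-1+}$, so by Markov $|B| \lesssim K^{-1+}(K^-)^{-2}$, negligible beside $|\Gamma_1| \sim K^{-1} N^\sigma$. Thus $|G| \gtrsim K^{-1} N^\sigma$, and on $G$ the derivative $g'$ has constant sign with $|g'| \gtrsim (K^-)^2 H$. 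The estimate $\bigl|\int_G g'\, ds\bigr| \gtrsim (K^-)^2 H \cdot K^{-1} N^\sigma$ combined with the boundary bound $\bigl|\int_{y_1}^{y_2} g' \, ds\bigr| \le 40 K^+ N^{-\sigma}$ forces $H \lesssim K^{1+} N^{-2\sigma}$; choosing $H = K^{1+} N^{-2\sigma}$ with the $+$ exponent slightly larger than the boundary $K^+$ would produce the contradiction.

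The main obstacle I expect is controlling the bad-set contribution $\int_B g'\, ds$, since we have no pointwise upper bound on $|A_{x'}|$ to bound the integrand. The remedy is to exploit $|g| \le 1$: on each maximal bad sub-interval $g$ can change by at most a bounded constant times the threshold defining $B$, so a total-variation bookkeeping across the finitely many bad intervals bounds $\bigl|\int_B g' \, ds\bigr|$ by a quantity much smaller than the good-set lower bound. Carrying out this bookkeeping carefully, while keeping the many $K^+$ and $K^-$ exponents coordinated so that the desired $H \lesssim K^{1+} N^{-2\sigma}$ actually emerges, is the technical heart of the proof.
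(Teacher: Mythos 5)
Your overall skeleton parallels the paper's (boundary cubes $Q_1,Q_2$ with $|N\cdot v(T)|\le K^+N^{-\sigma}$, the identity relating $\frac{d}{ds}(N_\Gamma\cdot v(T))$ to $A_\Gamma$ via Lemma \ref{curvsubman}, Lemma \ref{totalvsdirv} to pass from a non-straight direction to $|A|$, and $A(H)$-avoidance to upgrade one point to all of $\Gamma_1$), but the good/bad decomposition that carries your contradiction has a genuine gap. The inequality you rely on -- that $|T_{\Gamma_1}-v(T)_T|$ is controlled by $|g|+|v(T)\cdot N|$ -- is only true up to sign: when $T_{\Gamma_1}$ is close to $-v(T)_T$, the quantity $|g|+|v(T)\cdot N|$ is just as small, so your Markov argument does not bound the set where $T_{\Gamma_1}$ is far from $+v(T)_T$. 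Nothing in your set-up prevents $\Gamma_1$ from doubling back: under the contradiction hypothesis you have only a lower bound on $|A|$, so the tangent can rotate by $\pi$ on an arbitrarily short arc, costing only a negligible amount of $\int_{\Gamma_1}|N_\Gamma\cdot v(T)|$, and this can happen many times. Each flip changes the sign of $T_{\Gamma_1}\cdot v(T)$ while leaving the sign of $A(T_{\Gamma_1},T_{\Gamma_1})$ unchanged, so $g'$ changes sign from one good interval to the next; the contributions to $\int_G g'$ can then cancel internally, and the bad-interval bookkeeping cannot be repaired by endpoint values of $g$ alone, since the number of maximal bad intervals is not bounded by anything in your argument. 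So the key claim ``$g'$ has constant sign on $G$ with $|g'|\gtrsim (K^-)^2H$'' is unproven as written, and it is exactly where the cancellation you worry about in your last paragraph can occur.

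The missing ingredient is the one the paper deploys first: $\Gamma$ avoids $Tan(W)$ (part of Reasonable Slice Condition \ref{badsetsempty}), so the tangent plane $T_{x'}Z$ varies by at most $K^{-(1/4)+}$ along $\Gamma_1$; combined with the transversality $\Angle(\pi(a,b),T_{x'}Z)\ge 1/20$ from Lemma \ref{totalvarbound} and one point where $|N\cdot v(T)|\le K^+N^{-\sigma}$ (which exists by Reasonable Slice Condition \ref{slicetangest}), this shows the unit tangent $v_1(x')$ of $\Gamma_1$ stays within $K^{-(1/4)+}$ of $v(T)$ along the entire component. Since $K^{-(1/4)+}$ is far smaller than $(K^-)^2$, your bad set is then literally empty, no doubling back can occur, and the whole contradiction/Markov apparatus becomes unnecessary: one integrates $A_\Gamma(v_1,v_1)$, whose sign is constant because $v_1$ is never straight, to get $\int_{\Gamma_1(x_1,x_2)}|A|\le K^+|N_\Gamma(x_1)-N_\Gamma(x_2)|\le K^+N^{-\sigma}$, pigeonholes a point with $|A|\le K^{1+}N^{-2\sigma}$ using $\Length \ge K^{-1-}N^\sigma$, and then chooses $H$ just above that value so that $A(H)$-avoidance propagates the bound over $\Gamma_1$. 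This is the paper's direct argument; once you add the $Tan(W)$ step your proof essentially collapses to it.
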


\begin{proof} By Lemma \ref{totalvarbound}, we  know that $\Gamma_1$ runs the whole length of $Seg(Q,T)$, and so $\Length( \Gamma_1) \ge K^{-1} N^\sigma$.  Lemma \ref{totalvarbound} also tells us that

$$\Angle(\pi(a,b), T_{x'}Z) \ge (1/20) \textrm{ for all } x' \in \Gamma_1. $$

Also, by Reasonable Slice Condition \ref{slicetangest}, we know that

$$ \int_{\Gamma_1} |N \cdot v(T)| \le  K^{-1+}. $$

\noindent Because of this integral estimate, we can find $x' \in \Gamma_1$ where $|N(x') \cdot v(T)| \le K^+ N^{-\sigma}$.   We know that $\Gamma_1$ is disjoint from $Tan(W)$ and so the normal vector $N(x')$ varies by at most $K^{-(1/4)+}$ along $\Gamma_1$.  Therefore, $\Angle( T_{x'} \Gamma_1, v(T)) \le K^{-(1/4)+}$ for all $x' \in \Gamma_1$.  Let $v_1(x)$ be the unit tangent vector in $T_x \Gamma_1$ that is roughly parallel to $v(T)$.  (There are two unit tangent vectors at each point, one roughly parallel to $v(T)$ and one roughly parallel to $- v(T)$.)  We know that $|v_1(x) - v(T)| \le K^{-(1/4)+}$. 

Our next goal is to prove that $S(x', v_1(x')) \ge K^-$ for all $x' \in \Gamma_1$.  We know that $\Gamma_1$ does not intersect $GFl$, so the Gauss curvature of $Z$ on $\Gamma_1$ is either everywhere positive or everywhere negative.  

In the negative case, there are two straight directions at each point of $\Gamma_1$, and they vary continuously.  
We know $S(x, v(T)) \ge K^-$, and so $v(T)$ is a distance $\ge K^-$ from any straight direction of $A_x$.  
Since $\Gamma \cap Str(W)$ is empty, the straight directions along $\Gamma_1$ only vary by $\le K^{-(1/4)+}$, and so $v(T)$ is a distance $\ge K^-$ from any straight of direction of $A_{x'}$.  Since $|v_1(x') - v(T)| \le K^{-(1/4)+}$, we conclude that $S(x',v_1(x')) \ge K^-$ for every $x' \in \Gamma_1$.  

In the positive case, we consider the eigenvectors instead of the straight directions.  An umbilic point lies in $Eig(w)$ for every unit vector $w$.  Since $\Gamma \cap Eig(W)$ is empty, $A_{x'}$ has two distinct eigenvectors at each point $x' \in \Gamma_1$.  These two eigenvectors vary continuously along $x'$.  We know $S(x, v(T)) \ge K^-$, and so $v(T)$ is a distance $\ge K^-$ from any any eigenvector of $A_x$.  Since $\Gamma \cap Eig(W)$ is empty, the eigenvectors only vary by an angle $\le K^{-(1/4)+}$, and so $v(T)$ is a distance $\ge K^-$ from any eigenvector of $A_{x'}$.  Since $|v_1(x') - v(T)| \le K^{-(1/4)+}$, we conclude that $S(x',v_1(x')) \ge K^-$ for every $x' \in \Gamma_1$.  

Lemma \ref{totalvsdirv} now gives us the following estimate for every $x' \in \Gamma_1$, 

$$ |A_{Z,x'} | \le K^+ |A_{Z,x'} (v_1, v_1) | . $$

Also, since $v_1(x')$ is never straight, we see that the sign of $A_{Z,x'} (v_1, v_1)$ is constant along $\Gamma_1$.  

By Lemma \ref{curvsubman}, the sign of $A_{\Gamma, x'} (v_1, v_1)$ is also constant along $\Gamma_1$.  Combining Lemma \ref{curvsubman} with the estimate $\Angle(\pi(a,b), T_{x'} Z) \ge (1/20)$ above, we see $|A_{Z,x'} (v_1, v_1)| \lesssim |A_{\Gamma,x'} (v_1, v_1)|$.  

For points $x_1, x_2 \in \Gamma_1$, define $\Gamma_1(x_1, x_2) \subset \Gamma_1$ as the segment of $\Gamma_1$ with endpoints $x_1, x_2$.  For any $x_1, x_2$, we now have the following integral estimate:

$$ \int_{\Gamma_1(x_1, x_2)} \left| A_{Z,x'} \right| \le K^+ \int_{\Gamma_1(x_1, x_2)} \left|A_{Z,x'} (v_1, v_1) \right| \le K^{+}   \left| \int_{\Gamma_1(x_1, x_2)} A_{\Gamma, x'} (v_1, v_1) \right|. $$

This last integral $\int_{\Gamma_1} A_{\Gamma,x}(v_1, v_1)$ measures the (angular) change in the unit normal vector $N_\Gamma$ from one end of $\Gamma_1$ to the other.  In particular, $\left| \int_{\Gamma_1(x_1, x_2)} A_{\Gamma, x'} (v_1, v_1) \right| \le (\pi/2) \left| N_\Gamma(x_1) - N_\Gamma(x_2) \right|$.  Putting it all together, we now have:

$$  \int_{\Gamma_1(x_1, x_2)} |A_{Z,x'}| \le K^+  | N_\Gamma(x_1) - N_\Gamma(x_2)|. \eqno{(*)}$$

Now we choose $x_1, x_2$ judiciously.
We let $Q_1, Q_2$ be the two reasonable cubes at opposite ends of $Seg(Q,T)$ described in Reasonable Tube Segment Condition \ref{boundarycubes0} and Reasonable Slice Condition \ref{boundarycubes}.  The Reasonable Slice Condition \ref{boundarycubes} says that
$\int_{\Gamma(a,b) \cap Q_i^+} |v(T) \cdot N| \le K^+ N^{-\sigma}$.  Since $\Gamma_1$ runs the whole length of $Seg(Q,T)$, we see that $\Gamma_1 \cap Q_i^+$ has length $\ge 1$ for each $i$.   Now, on $\Gamma_1$, we know that $\Angle (T_{x'} Z, \pi(a,b)) \ge 1/20$, and so $|v(T) \cdot N_\Gamma| \lesssim |v(T) \cdot N|$.  Therefore, we get

$$ \int_{\Gamma_1 \cap Q_i^+} \left| v(T) \cdot N_\Gamma \right| \le K^+ N^{-\sigma}. $$

Now we can choose $x_i \in \Gamma_1 \cap Q_i^+$ where $|v(T) \cdot N_\Gamma(x_i)| \le K^+ N^{-\sigma}$.  This implies that $|N_\Gamma(x_1) - N_\Gamma(x_2)| \le K^+ N^{-\sigma}$.  (Each vector $N_\Gamma(x_i)$ is almost normal to $v(T)$ and lies in $\pi(a,b)$.  These normal vectors cannot point in nearly opposite directions because the change in $N(x_i)$ along $\Gamma_1$ is $\le K^{-(1/4)+}$, and so the change in $N_\Gamma$ is $\le K^+ K^{-(1/4)+}$.)    Plugging in this estimate to the right-hand side in inequality $(*)$ we see: 

$$ \int_{\Gamma_1(x_1, x_2)} |A_{Z,x}|  \le K^+ |N_\Gamma(x_1) - N_\Gamma(x_2)| \le K^+ N^{-\sigma}. $$

Reasonable Tube Segment Condition \ref{boundarycubes0} says that the distance from $Q_i$ to $Q$ is $\ge K^{-1-} N^\sigma$.  So we see that $\Length(\Gamma_1(x_1, x_2) \ge K^{-1-} N^\sigma$.  Therefore, we can find a point $x' \in \Gamma_1(x_1, x_2)$ where $|A_{x'}| \le K^{1+} N^{-2 \sigma}$.

At this point, we choose the number $H$ in Reasonable Tube Segment Condition \ref{segA(H)bound}.  We choose $H$ so that

$$ |A_{x'}| < H < K^{1+} N^{-2 \sigma}. $$

By Reasonable Slice Condition \ref{badsetsempty}, $\Gamma_1 \cap A(H)$ is empty.   We conclude that $|A| < H  < K^{1+} N^{-2 \sigma}$ everywhere on $\Gamma_1$.  This proves the lemma.

\end{proof}

\subsection{Pointwise curvature bounds}

The tools from the last section allow us to prove strong bounds on the curvature of $Z$.  We will prove that at many places, the second fundamental form of $Z$ is bounded by  $K^{1+} N^{- 2 \sigma}$.  If this were true at every point, then it would instantly imply that if $x, x'$ are endpoints of a curve in $Z$ of length $\le N^{\sigma}$, then $\Angle(T_x Z, T_{x'} Z) \le K^+ N^{-\sigma}$.  Although the curvature bound does not hold at every point, we will prove that it holds in lots of places and this is sufficient to control the twisting of the tangent plane along most tube segments. 

We call $Q \in X$ a very reasonable cube if $Q$ is a reasonable cube and if a fraction $(1 - K^-)$ of the segments $Seg(Q,T)$ are reasonable.  The number of very reasonable cubes of $X$ is still $\ge (1- K^-) |X|$.  

\begin{prop} \label{goodcurvbound} If $Q$ is very reasonable, 

$$ \Area \{ x \in Z_{Q,nice} \textrm{ such that } |A_x| > H \} \le K^- \Area Z_{Q,nice}. $$

\end{prop}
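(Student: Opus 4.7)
The plan is to exploit the three-direction hypothesis together with Lemma \ref{curvboundnonstraightdir}: for each bad point $x \in B := \{x \in Z_{Q,nice} : |A_x| > H\}$, many tubes $T$ through $Q$ have a direction that is far from straight at $x$, but then Lemma \ref{curvboundnonstraightdir} would force $|A_x| < H$ given any reasonable slice of $Seg(Q,T)$ through $x$. So for each such $T$, no reasonable slice through $x$ can exist, and a Fubini argument in the slice-parameter space $(a,b)$ will then bound $\Area(B)$.

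Define $\mathcal{G}(x) := \{T : Q\cap T \neq \emptyset, \; Seg(Q,T) \text{ reasonable}, \; S(x,v(T)) \geq K^-\}$, interpreting $S(x,v(T))$ as $S$ applied to the unit projection of $v(T)$ onto $T_x Z$ (legitimate since $\Angle(v(T),T_xZ) \leq K^+ N^{-\sigma}$ for $x \in Z_{Q,nice}$). The form $A_x$ has at most two ``dangerous'' directions: the straight directions if $\det A_x < 0$, the eigenvectors if $\det A_x > 0$, and none if $A_x$ is umbilic (where $S \equiv 1$). Applying Hypothesis \ref{uniform3trans}(4) to $Q$ with these two vectors gives a fraction $\geq E^{-1}$ of tubes through $Q$ with $S(x,v(T)) \geq E^{-1} \geq K^-$; combined with the very reasonable condition on $Q$ (a $(1-K^-)$ fraction of $Seg(Q,T)$ are reasonable), we get $|\mathcal{G}(x)| \gtrsim E^{-1}\rho$. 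Now fix $x \in B$ and $T \in \mathcal{G}(x)$: if a reasonable slice $\Gamma(a,b)$ of $Seg(Q,T)$ passed through $x$, Lemma \ref{curvboundnonstraightdir} with witness $x$ would yield $|A_x| < H$, contradicting $x \in B$. Hence every $(a,b)$ with $x \in \Gamma(a,b)$ is non-reasonable for $T$; such $(a,b)$ form the line $L_x = \{(a,x_1+ax_2) : a \in (-\tfrac{1}{10},\tfrac{1}{10})\}$ of length $\sim 1$, so $L_x$ lies entirely in the non-reasonable set $N_T$ (of 2D measure $\leq K^-$).

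A Fubini estimate shows $\int_{B^2(10)} \Length(L_y \cap N)\, dy \lesssim |N|$ for any planar $N$: for each $a$, the preimage $\{y \in B^2(10) : (a,y_1+ay_2) \in N\}$ is a union of strips of total area $\lesssim |N^a|$ with $N^a := \{b : (a,b) \in N\}$, and integrating over $a$ gives $|N|$. By Reasonable Cube Condition \ref{paramdisk}, $Z_{Q,nice}$ is the union of $\kappa$ near-horizontal graphs over $Y \subset B^2(10)$ with Lipschitz constant $\leq 10\lambda$, so $\int_{Z_{Q,nice}} \Length(L_x \cap N_T)\, dx \lesssim \kappa K^-$. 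For $x \in B$ with $T \in \mathcal{G}(x)$ the integrand is $\Length(L_x) \sim 1$, giving $\Area(B \cap \mathcal{G}^{-1}(T)) \lesssim \kappa K^-$. Summing over the $\leq E\rho$ tubes $T$ through $Q$ and using $\int_B |\mathcal{G}(x)|\, dx \geq \tfrac{1}{2}E^{-1}\rho \cdot \Area(B)$ yields $\Area(B) \lesssim E^2 \kappa K^- \lesssim \kappa K^-$ (absorbing $E^2$ into $K^-$). Since $\Area(Z_{Q,nice}) \gtrsim \kappa |Y| \gtrsim \kappa$ (the bad balls in Reasonable Cube Condition \ref{paramdisk}(2) cover only a tiny portion of $B^2(10)$), the conclusion $\Area(B) \leq K^- \Area(Z_{Q,nice})$ follows.

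The main obstacle is converting ``no reasonable slice through $x$'' (a statement about the measure-zero 1D family $L_x$) into a 2D measure bound. Slices through a given $x$ form only the 1D line $L_x$ in parameter space, while reasonable slices have $(1-K^-)$ measure in the 2D sense; the Fubini computation above is what bridges these, by recognizing that as $(x_1,x_2)$ ranges over $B^2(10)$ the lines $L_x$ sweep out a 2D region with bounded multiplicity so that the total length of $L_x$ inside $N_T$ is controlled by $|N_T|$ itself. A minor nuisance is that the argument must be carried out in the $T$-adapted coordinate system (where $v(T)$ is the $x_3$-axis and $T_QZ$ is close to the $(x_1,x_3)$-plane), but the change of coordinates from $Q$-adapted to $T$-adapted is small and does not affect the Fubini bound up to absolute constants.
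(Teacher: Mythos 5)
Your proof is correct, and it is built on the same two pillars as the paper's argument: Lemma \ref{curvboundnonstraightdir} (a reasonable slice through a nice point at which the tube direction is $K^-$-far from straight forces $|A|<H$ there) and the layered-graph description of $Z_{Q,nice}$ from Reasonable Cube Condition \ref{paramdisk}, which is what lets one convert slice information into area. The differences are in the bookkeeping, and both of your deviations are sound. First, the paper fixes once and for all three pairwise $K^-$-transverse tubes $T_1,T_2,T_3$ with reasonable segments (using Hypothesis \ref{uniform3trans}(4) just to produce them), decomposes the bad set as $\cup_i Bad_i$ with $Bad_i=\{S(x,v(T_i))>K^-,\ |A_x|>H\}$, and bounds each $\Area Bad_i$; you instead apply Hypothesis (4) pointwise at each bad $x$ to get $\gtrsim E^{-1}\rho$ admissible tubes and then average over tubes, which costs you only a harmless factor $E^2$. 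Second, where the paper converts ``reasonable slices miss $Bad_i$'' into an area bound by quoting Lemma \ref{sliceest1} (itself resting on the integral-geometry Lemma \ref{intgeomavgest} plus the fact that each plane meets each graph in one curve of bounded length), you prove the needed conversion directly: every plane through a bad point is a non-reasonable slice, and a Fubini/coarea estimate over the $1$-parameter families $L_x$ shows the set of points all of whose slices land in the small non-reasonable set $N_T$ has area $\lesssim \kappa\,|N_T|$. This is essentially a hands-on proof of (the case you need of) Lemma \ref{sliceest1}. One imprecision to fix in the write-up: your Fubini lemma is stated with integration over $(y_1,y_2)$, the first two $T$-adapted coordinates, but in the application the integration variables are the graph parameters $u$ spanning $T_QZ$ (roughly $(x_1,x_3)$ in $T$-adapted coordinates), while $x_2$ is the graph value $f_j(u)$; the estimate survives because on each near-horizontal graph the map $u\mapsto x_1(u)+a\,x_2(u)$ has gradient of norm $\sim 1$ (the $f_j$-contribution enters with the tiny Lipschitz constant and with $|a|\le 1/10$, and the tilt between $T_QZ$ and the $(x_1,x_3)$-plane is $\le K^+N^{-\sigma}$), so its preimages of sets $N_T^a\subset\RR$ still have area $\lesssim |N_T^a|$ --- this is exactly the ``coordinate nuisance'' you flag, and it should be spelled out at the level of the graphs rather than of $(y_1,y_2)$. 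A small bonus of your pointwise route is that the paper's pedantic remark about $Bad_i$ not being open is unnecessary: you only need measurability for Fubini, and $\{|A_x|>H\}$ is open.
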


Recall that $H \sim K^{1+} N^{-2 \sigma}$ was defined in the proof of Lemma \ref{curvboundnonstraightdir}.

\begin{proof} Let $Q$ be a very reasonable cube.  By the fourth item in Hypotheses \ref{uniform3trans}, we can choose three tubes $T_1, T_2, T_3$ meeting $Q$ with pairwise angles $\ge K^-$ and so that all segments $Seg(Q, T_i)$ are reasonable.  At each point $x \in Z_{Q, nice}$, $\max_{i=1}^3 S(x, v(T_i)) \ge K^-$.  (If $Z$ has negative Gauss curvature at $x$, then $S(x,v)$ measures the distance from $v$ to the straight directions of $A_x$.  Up to sign, there are only two straight directions.  Therefore, one of the three tubes must be at an angle $\ge K^-$ from straight.  The case of non-negative Gauss curvature is similar with the eigenvectors instead of the straight directions.)

Let $H < K^{1+} N^{- 2 \sigma}$ be the number chosen in the proof of Lemma \ref{curvboundnonstraightdir}.

$$ Bad_i := \{ x \in Z_{Q, nice} | S(x, v(T_i) ) > K^- \textrm{ and } |A_x| > H \}. $$

It now suffices to prove that $\Area Bad_i \le K^- \Area Z_{Q, nice}$ for each $i$.  We fix $i$ for the rest of the proof.  We consider slices of $Seg(Q, T_i)$.  

Lemma \ref{curvboundnonstraightdir} says that if $\Gamma(a,b)$ is a reasonable slice of $Z \cap Seg(Q,T_i)$, and $x \in Z_{Q,nice} \cap \Gamma(a,b)$ and $S(x, v(T_i)) \ge K^-$ then $|A_x| < H$.  Therefore, for a reasonable $\Gamma(a,b)$, $\Gamma(a,b) \cap Bad_i$ is empty.  Since a slice $\Gamma(a,b)$ is reasonable with probability $(1 - K^-)$, we get the following probability estimate:

$$ \Prob_{a,b} [ \pi(a,b) \cap Bad_i \textrm{ is non-empty} ] \le K^-. $$

We would like to use this probability estimate to bound the area of $Bad_i$.  To do this, we have to exploit the geometry of $Z_{Q, nice}$ described by nicely parametrized disks with
small holes.  We state the result we need as a lemma.

\begin{lemma} \label{sliceest1} Suppose that $X$ is an open subset of $Z_{Q,nice}$, then

$$ \Area X \le C \Prob_{a,b} [ \pi(a,b) \cap X \textrm{ is non-empty }] \Area Z_{Q, nice}. $$

\end{lemma}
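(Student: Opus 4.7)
The plan is to exploit the layered structure of $Z_{Q,nice}$ provided by Reasonable Cube Condition \ref{paramdisk}, and the Crofton-type formula in Lemma \ref{intgeomavgest}. Write $Z_{Q,nice}$ as the disjoint union over $y \in Y$ of the $\kappa$ graphs of the Lipschitz functions $f_j : Y \to (-H,H)$, where $Y = B^2(10) \setminus \bigcup_i \bar B_i$ and $\kappa \ge K^+$.

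First I would apply Lemma \ref{intgeomavgest} with $f = 1$ to relate $\Area X$ to slice lengths:
$$\Area X \;\sim\; \Avg_{a,b} \Length(\pi(a,b) \cap X).$$
Because $X \subset Z_{Q,nice}$ and each $f_j$ is $10\lambda$-Lipschitz with $\lambda = K^{-10}$, each slice $\pi(a,b) \cap \{\,\text{graph } f_j\,\}$ is a curve lying over the line segment $\{x_1 + a x_2 = b\} \cap B^2(10)$, of length at most (say) $30$. Summing over the at most $\kappa$ sheets, we get the uniform bound $\Length(\pi(a,b) \cap X) \le \Length(\pi(a,b) \cap Z_{Q,nice}) \le 30\kappa$ valid for every $(a,b)$. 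The key point is that this bound is zero whenever $\pi(a,b) \cap X$ is empty, so
$$\Avg_{a,b} \Length(\pi(a,b) \cap X) \;\le\; 30\kappa \cdot \Prob_{a,b}[\pi(a,b) \cap X \neq \emptyset].$$

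Next I would produce the matching lower bound $\Area Z_{Q,nice} \ge c\kappa$ for an absolute constant $c > 0$. Each graph of $f_j$ over $Y$ projects onto $Y \subset B^2(10)$, and by item (2) of Reasonable Cube Condition \ref{paramdisk}, the sum of the radii of the removed bad balls $B_i$ is at most $K^+ \lambda^{-2} N^{-\sigma}$, which is much smaller than $1$ under the standing assumption that $N^\sigma$ dominates any polynomial in $E, \epsilon^{-1}$. Hence $\Area Y \ge \Area B^2(10) - O(\sum r_i) \ge 50\pi$. Since the $f_j$ are $10\lambda$-Lipschitz, each graph has area at least $\Area Y$, and because the graphs are pairwise disjoint by item (5) we get $\Area Z_{Q,nice} \ge 50\pi \cdot \kappa$. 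Combining the two estimates yields
$$\Area X \;\le\; C \kappa \Prob_{a,b}[\pi(a,b) \cap X \neq \emptyset] \;\le\; C' \Prob_{a,b}[\pi(a,b) \cap X \neq \emptyset] \cdot \Area Z_{Q,nice},$$
which is the claimed inequality.

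The main thing to be careful about is the uniform bound on $\Length(\pi(a,b) \cap Z_{Q,nice})$: one must ensure that no single sheet contributes more than a constant to the slice length, which follows from the Lipschitz control on $f_j$ together with the fact that the base $B^2(10)$ has bounded diameter. Everything else is a clean application of the layered structure of $Z_{Q,nice}$ plus Lemma \ref{intgeomavgest}.
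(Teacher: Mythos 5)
Your proof is correct and is essentially the paper's argument: both rest on the layered structure of $Z_{Q,nice}$ from Reasonable Cube Condition \ref{paramdisk}, the Crofton-type Lemma \ref{intgeomavgest}, the fact that each sheet meets $\pi(a,b)$ in a curve of bounded length, and the fact that each sheet has area $\sim 1$ so that $\kappa \sim \Area Z_{Q,nice}$ (the paper simply runs the estimate sheet by sheet instead of aggregating with the uniform bound $30\kappa$). The one point to state explicitly is that the bound $\Length\bigl(\pi(a,b) \cap \mathrm{graph}(f_j)\bigr) \le C$ does not follow from the Lipschitz bound and the bounded base alone --- a plane nearly parallel to the sheets could meet a Lipschitz graph in a very long curve --- but uses the quantitative transversality $\Angle(\pi(a,b), T_Q Z) \ge 1/10$ of Lemma \ref{anglepiabtqz} (equivalently Lemma \ref{anglepiabnice}), which is exactly what the paper invokes at this step and which your phrase ``lying over the line segment'' tacitly assumes.
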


\begin{proof} Recall that $Z_{Q, nice}$ is contained in the union of the graphs of some functions $f_j: B^2(10) \rightarrow \RR$ obeying $Lip(f_j) \le K^{-10+}$. Let $X_j$ be the part of $X$ in the graph of $f_j$.  Recall that the whole graph of $f_j$ has area $\sim 1$, and that almost all of the graph of $f_j$ lies in $Z_{Q,nice}$.  So it suffices to prove:

$$ \Area X_j \le C \Prob_{a,b} [ \pi(a,b) \cap X_j \textrm{ is non-empty}]. $$

Using Lemma \ref{intgeomavgest} from integral geometry, we get

$$ \Area X_j \le C \Avg_{(a,b)} \Length(X_j \cap \pi(a,b)).  $$

By Lemma \ref{anglepiabnice}, we know that $\Angle( \pi(a,b), T_x Z) \ge 1/12$ for all $x \in Graph(f_j)$.  This implies that $\pi(a,b) \cap Graph(f_j)$ is a single curve of length $\le C$.  Returning to the last inequality, we can now continue:

$$ \Area X_j \le C \Avg_{(a,b)} \Length(X_j \cap \pi(a,b)) \le C \Prob_{(a,b)} [\pi(a,b) \cap X_j \textrm{ is non-empty}]. $$

\end{proof}

We finish with a pedantic point.  The sets $Bad_i$ are not necessarily open because the function $S(x,v)$ is not continuous in $x$.  But $S(x,v)$ is continuous in $x$ on the complement of the set of Gauss flat points and the set of totally umbilic points.  So $Bad_i$ is contained in an open set and an algebraic curve.  The area of the open set is bounded by Lemma \ref{sliceest1}.  This finishes the proof of Proposition \ref{goodcurvbound}. \end{proof}

\subsection{The end of the proof}

Finally, we can bound the curvature and the change of the tangent plane along a reasonable slice and prove Theorem \ref{graininess}.

\begin{prop} If $Q$ is very reasonable, and $Seg(Q,T)$ is reasonable, and $Q'$ is a reasonable cube in $Seg(Q,T)$, then $\Angle(T_Q Z, T_{Q'} Z) \le K^{+} N^{-\sigma}$.
\end{prop}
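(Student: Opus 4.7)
The plan is to construct a curve $\Gamma_1 \subset Z$ joining a point $x \in Z_{Q, nice}$ to a point $y \in (Q')^+$ along which the second fundamental form satisfies $|A| < H \sim K^{1+} N^{-2\sigma}$. By definition of the second fundamental form, $|\nabla_v N| \le |A_z|$ for any unit tangent $v$, so integrating along $\Gamma_1$ from $x$ to $y$ gives $|N(x) - N(y)| \le \int_{\Gamma_1,\, x \to y} |A|$. Once this integral is $\le H \cdot \Length(\Gamma_1) \le K^+ N^{-\sigma}$, combining with $\Angle(T_x Z, T_Q Z) \le K^{-10+}$ from $x \in Z_{Q, nice}$ and with $\Angle(T_y Z, T_{Q'} Z) \le K^+ N^{-\sigma}$ (to be arranged) yields $\Angle(T_Q Z, T_{Q'} Z) \le K^+ N^{-\sigma}$ by the triangle inequality.

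I would take $\Gamma_1$ to be a connected component of a slice $\Gamma(a,b) = \pi(a,b) \cap Z \cap Seg(Q,T)$ as in Section 4.3, and choose $(a,b)$ so that: (i) $\Gamma(a,b)$ is a reasonable slice (probability $1 - K^-$); (ii) $\Gamma(a,b)$ meets the good set $G := \{z \in Z_{Q,nice} : |A_z| < H\}$, which by Proposition \ref{goodcurvbound} has area $\ge (1-K^-)\Area(Z_{Q,nice})$ and so by Lemma \ref{sliceest1} is met with probability bounded below by a positive constant; (iii) $\Length(\Gamma(a,b)) \le K^{-1+} N^\sigma$, by Markov applied to Lemma \ref{intgeomavgest} and the area bound $\Area(Z \cap Seg(Q,T)) \le K^{-1+} N^\sigma$ from the Reasonable Tube Segment Conditions; and (iv) $\int_{\Gamma(a,b) \cap (Q')^+} \Angle(T_z Z, T_{Q'} Z) \le K^+ N^{-\sigma}$, by Lemma \ref{intgeomavgest} applied to Reasonable Cube Condition \ref{tangplane} for $Q'$. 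Since (i), (iii), (iv) each fail with probability only $K^-$ and (ii) holds with probability bounded below by a constant, the joint event still has positive probability, so such $(a,b)$ exists.

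Fix such a slice and pick $x \in \Gamma \cap G$, letting $\Gamma_1$ denote the component of $\Gamma$ through $x$. By Lemma \ref{totalvarbound}, $\Gamma_1$ runs the full length of $Seg(Q,T)$ and moves at most $K^{-1+}$ in the direction of $\pi(a,b)$ perpendicular to $v(T)$; since $Q$ and $Q'$ both meet the unit tube $T$, their $(x_1,x_2)$-displacement is $O(1)$, so $\Gamma_1$ remains inside $(Q')^+$ as it traverses the $x_3$-range of $(Q')^+$, giving $\Length(\Gamma_1 \cap (Q')^+) > 0$. Since $|A_x| < H$ and $\Gamma \cap A(H) = \emptyset$ by Reasonable Slice Condition \ref{badsetsempty}, continuity of $|A|$ along $\Gamma_1$ forces $|A| < H$ throughout $\Gamma_1$, so $\int_{\Gamma_1} |A| \le H \cdot \Length(\Gamma_1) \le K^{1+} N^{-2\sigma} \cdot K^{-1+} N^\sigma = K^+ N^{-\sigma}$. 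From (iv) and the positive length of $\Gamma_1 \cap (Q')^+$, I pick $y \in \Gamma_1 \cap (Q')^+$ with $\Angle(T_y Z, T_{Q'} Z) \le K^+ N^{-\sigma}$, and conclude by the triangle inequality. The main obstacle is arranging all four conditions on $(a,b)$ simultaneously, because (ii) has only positive constant probability rather than $1 - K^-$; a subsidiary technical point is verifying that $\Gamma_1$ genuinely spends positive length inside $(Q')^+$, which rests on the transverse-motion bound from Lemma \ref{totalvarbound} combined with the $O(1)$-proximity of $Q$ and $Q'$ in the plane perpendicular to $v(T)$.
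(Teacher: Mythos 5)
There is a genuine gap at the final triangle inequality. You bound $\Angle(T_xZ, T_QZ)$ for your chosen point $x$ only by $K^{-10+}$, using the fact that $x \in Z_{Q,nice}$ (the Lipschitz bound $10\lambda = 10K^{-10}$ from Reasonable Cube Condition \ref{paramdisk}). But $K^{-10+}$ is a constant depending only on $E$ and $\epsilon^{-1}$; it does not decay in $N$. Since the hypotheses only require $N^\sigma$ to be \emph{larger} than a polynomial in $E,\epsilon^{-1}$, the quantity $K^+N^{-\sigma}$ can be far smaller than $K^{-10}$, so your triangle inequality only yields $\Angle(T_QZ,T_{Q'}Z) \le K^{-10+} + K^+N^{-\sigma}$, which is much weaker than the claimed $K^+N^{-\sigma}$. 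The paper avoids this by building the tangent-plane condition into the good set: it takes $X \subset Z_{Q,nice}$ to consist of points where \emph{both} $|A_x| < H$ \emph{and} $\Angle(T_xZ,T_QZ) < K^+N^{-\sigma}$; the second condition fails only on a $K^-$-fraction of $Z_{Q,nice}$ because of the integral bound $\int_{Z\cap Q^+}\Angle(T_xZ,T_QZ) \le K^+N^{-\sigma}$ in Reasonable Cube Condition \ref{tangplane} together with $\Area Z_{Q,nice} \gtrsim 1$. With this strengthened good set your probabilistic argument (reasonable slice, slice meets the good set with probability $\gtrsim 1$ via Lemma \ref{sliceest1}, condition on $(Q')^+$) goes through exactly as you outlined, so the repair is small but necessary.

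Two smaller points. First, to extract $y \in \Gamma_1\cap (Q')^+$ with $\Angle(T_yZ,T_{Q'}Z) \le K^+N^{-\sigma}$ from the integral bound (iv), you need $\Length(\Gamma_1\cap (Q')^+) \gtrsim 1$, not merely positive length; your geometric reasoning (that $\Gamma_1$ runs the whole length of $Seg(Q,T)$ inside $T^+$ with transverse variation $\le K^{-1+}$, while $(Q')^+$ has side $1000$ and $Q'$ meets $T$) does in fact give length $\ge 1$, as the paper states, but you should say so, since otherwise the averaging step is vacuous. Second, your condition (iii), bounding $\Length(\Gamma(a,b))$ by Markov applied to Lemma \ref{intgeomavgest} and the area bound on $Z\cap Seg(Q,T)$, is a perfectly good (and slightly more explicit) way to control $\Length(\Gamma_1)$ in the estimate $\int_{\Gamma_1}|A| \le H\cdot\Length(\Gamma_1)$; the paper leaves this length bound implicit, so this part of your argument is fine and even a touch more careful.
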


\begin{proof} Consider the set $X \subset Z_{Q, nice}$ of points $x$ where 

\begin{itemize} 

\item $|A_x| < H$.

\item $Angle(T_x Z, T_Q Z) < K^+ N^{-\sigma}$.

\end{itemize}

Recall that $H \sim K^{1+} N^{-2 \sigma}$ was defined in the proof of Lemma \ref{curvboundnonstraightdir}, and it appears in the statement of Proposition \ref{goodcurvbound}.
By the curvature bounds in Proposition \ref{goodcurvbound}, and by the bounds on the tangent plane in Reasonable Cube Condition \ref{tangplane}, the area of $Z_{Q,nice} \setminus X$ is $\le K^- \Area Z_{Q, nice}$.  (The set of $x \in \Sigma$ where $|A_x| = H$ is an algebraic curve with area zero.)

Now we consider a random $(a,b)$ and look at the slice $\Gamma(a,b)$.  We claim that with probability $c > 0$, a random slice $\Gamma(a,b)$ contains a point $x \in X$.  By Lemma \ref{sliceest1}, we have 

$$ \Area X \le C \Prob_{a,b} [ \pi(a,b) \cap X \textrm{ is non-empty }] \Area Z_{Q, nice}. $$

\noindent But $\Area X \ge (1 - K^-) \Area Z_{Q,nice}$.  Therefore, $\Prob_{a,b} [ \pi(a,b) \cap X \textrm{ is non-empty }] \ge c > 0$.

We are going to prove that with positive probability, this slice has further good properties.

With probability $(1 - K^-)$, $\Gamma(a,b)$ is reasonable.  Let $\Gamma_1$ be the component of $\Gamma$ containing $x$.   Since $x \in Z_{Q,nice}$, Lemma \ref{totalvarbound} guarantees that $\Gamma_1$ runs the whole length of $Seg(Q,T)$.  Since $\Gamma(a,b)$ is reasonable, $\Gamma_1 \cap A(H)$ is empty.  Since $|A_x| < H$ it follows that $|A| < H \le K^{1+} N^{-2 \sigma}$ at every point of $ \Gamma_1$.   So for every $x' \in \Gamma_1$,

$$\Angle (T_{x'} Z, T_x Z) \le \int_{\Gamma_1} |A| \le K^{1+} N^{-2 \sigma} \Length (\Gamma_1) \le K^+ N^{-\sigma}. $$

By hypothesis, $Q'$ is a reasonable cube in $Seg(Q,T)$.  Since $\Gamma_1$ runs the whole length of $Seg(Q,T)$, we know that $\Gamma_1 \cap (Q')^+$ has length $\ge 1$.

Since $Q'$ is reasonable, Reasonable Cube Condition \ref{tangplane} says that $\int_{Z \cap (Q')^+} \Angle( T_{x'} Z, T_{Q'} Z) \le K^+ N^{-\sigma}$.  
Now by integral geomety (Lemma \ref{intgeomavgest}), we have
$ \Avg_{(a,b)} \int_{\Gamma(a,b) \cap (Q')^+} \Angle( T_{x'} Z, T_{Q'} Z) \le K^+ N^{-\sigma}$.
In particular, with probability $(1 - K^-)$, we have

$$ \int_{\Gamma_1 \cap (Q')^+}  \Angle( T_{x'} Z, T_{Q'} Z) \le K^+ N^{-\sigma}. $$

In particular, we can choose $x' \in \Gamma_1 \cap (Q')^+$ where $\Angle ( T_{x'} Z, T_{Q'} Z) \le K^+ N^{-\sigma}$.

Finally we have

$$\Angle(T_{Q'} Z, T_Q Z) \le \Angle(T_{Q'} Z, T_{x'} Z) + \Angle(T_{x'} Z, T_x Z) + \Angle(T_x Z, T_Q Z).$$

With positive probability (in the random choice of $(a,b)$), each of these three angles is bounded by $K^+ N^{-\sigma}$.  But $\Angle(T_{Q'} Z, T_Q Z)$ does not depend on $(a,b)$, so it must be bounded by $K^+ N^{-\sigma}$.  \end{proof}

This finishes the proof of Theorem \ref{graininess}.


\begin{thebibliography}{5}

\vskip.125in

\bibitem[B]{B} J. Bourgain, On the dimension of Kakeya sets and related maximal inequalities. Geom. Funct. Anal. 9 (1999), no. 2, 256-282. 

\bibitem[BCT]{BCT} J. Bennett, A. Carbery, and T. Tao, On the multilinear restriction and Kakeya conjectures. Acta Math. 196 (2006), no. 2, 261-302.

\bibitem[D]{D} Z. Dvir, On the size of Kakeya sets in finite fields. J. Amer. Math. Soc. 22 (2009), no. 4, 1093-1097.

\bibitem[EKS]{EKS}  G. Elekes, H. Kaplan, and M. Sharir, On lines, joints, and incidences in three dimensions. J. Combin. Theory Ser. A 118 (2011), no. 3, 962Ð977.

\bibitem[G]{Gu1}  L. Guth, The endpoint case of the Bennett-Carbery-Tao multilinear Kakeya conjecture. Acta Math. 205 (2010), no. 2, 263-286. 

\bibitem[GK]{GK}  L. Guth, and N. Katz, Algebraic methods in discrete analogs of the Kakeya problem. Adv. Math. 225 (2010), no. 5, 2828-2839.

\bibitem[KLT]{KLT} N. Katz, I. Laba, and T. Tao,
An improved bound on the Minkowski dimension of Besicovitch sets in $R^3$.
Ann. of Math. (2) 152 (2000), no. 2, 383-446. 

\bibitem[S]{S} L. Santal\'o, Integral geometry and geometric probability. Second edition. With a foreword by Mark Kac. Cambridge Mathematical Library. Cambridge University Press, Cambridge, 2004

\bibitem[ST]{ST} A. Stone, and J. Tukey, Generalized "sandwich'' theorems.
Duke Math. J. 9, (1942). 356-359. 


\end{thebibliography}
\end{document}